\newcounter{enunciato}[section]
\newtheorem{ittheorem}{Theorem}
\newtheorem{itlemma}{Lemma}
\newtheorem{itproposition}{Proposition}
\newtheorem{itdefinition}{Definition} 
\newtheorem{itcorollary}{Corollary} 
\newtheorem{itconjecture}{Conjecture}
\newenvironment{theorem}{\addtocounter{enunciato}{1}
\begin{ittheorem}}{\end{ittheorem}}
\newenvironment{lemma}{\addtocounter{enunciato}{1}
\begin{itlemma}}{\end{itlemma}}
\newenvironment{proposition}{\addtocounter{enunciato}{1}
\begin{itproposition}}{\end{itproposition}}
\newenvironment{corollary}{\addtocounter{enunciato}{1}
\begin{itcorollary}}{\end{itcorollary}}
\newenvironment{conjecture}{\addtocounter{enunciato}{1}
\begin{itconjecture}}{\end{itconjecture}}
\newcommand{\halmos}{\rule{1ex}{1.4ex}}
\newenvironment{proof}{\noindent {\em Proof}.\,\,}
{\hspace*{\fill}$\halmos$\medskip}
\def \ba {\begin{array}}
\def \ea {\end{array}}
\def \Z {{\mathbb Z}}
\def \R {{\mathbb R}}
\def \N {{\mathbb N}}
\def \P {{\mathbb P}}
\def \E {{\mathbb E}}
\def \cL {{\mathcal L}}
\def \cD {{\mathcal D}}
\def \cP {{\mathcal P}}
\def \cR {{\mathcal R}}
\def \cC {{\mathcal C}}
\def \cA {{\mathcal A}}
\def \cN {{\mathcal N}}
\def \cR {{\mathcal R}}
\def \cB {\mathcal B}
\def \cM {\mathcal M}
\def \wZ {\widetilde Z}
\def \wP {\widetilde {\mathcal P}}
\def \tr {{\rm tr}}
\def \Pr {{\mathbb P}}
\def \l {\lambda}
\def \g {\gamma}
\def \o {\omega}
\begin{document}
\title{A copolymer near a selective interface:\\ 
variational characterization of the free energy}

\author{
\renewcommand{\thefootnote}{\arabic{footnote}}
E.\ Bolthausen
\footnotemark[1]
\\
\renewcommand{\thefootnote}{\arabic{footnote}}
F.\ den Hollander
\footnotemark[2]\,\,\,\,\footnotemark[3]
\\
\renewcommand{\thefootnote}{\arabic{footnote}}
A.A.\ Opoku
\footnotemark[2]
}

\footnotetext[1]{
Institut f\"ur Mathematik, Universit\"at Z\"urich,
Winterthurerstrasse 190, CH-8057 Z\"urich, Switzerland.
}

\footnotetext[2]{
Mathematical Institute, Leiden University, P.O.\ Box 9512,
2300 RA Leiden, The Netherlands.
}

\footnotetext[3]{
EURANDOM, P.O.\ Box 513, 5600 MB Eindhoven, The Netherlands.
}

\maketitle

\begin{abstract}
In this paper we consider a random copolymer near a selective interface separating 
two solvents. The configurations of the copolymer are directed paths that can make 
i.i.d.\ excursions of finite length above and below the interface. The excursion 
length distribution is assumed to have a tail that is logarithmically equivalent 
to a power law with exponent $\alpha \geq 1$. The monomers carry i.i.d.\ real-valued 
types whose distribution is assumed to have zero mean, unit variance, and a finite 
moment generating function. The interaction Hamiltonian rewards matches and penalizes 
mismatches of the monomer types and the solvents, and depends on two parameters: the 
interaction strength $\beta\geq 0$ and the interaction bias $h \geq 0$. We are 
interested in the behavior of the copolymer in the limit as its length tends to 
infinity. 

The quenched free energy per monomer $(\beta,h) \mapsto g^\mathrm{que}(\beta,h)$ has a 
phase transition along a quenched critical curve $\beta \mapsto h^\mathrm{que}_c(\beta)$ 
separating a localized phase, where the copolymer stays close to the interface, from 
a delocalized phase, where the copolymer wanders away from the interface. We derive 
\emph{variational formulas} for both these quantities. We compare these variational 
formulas with their analogues for the annealed free energy per monomer $(\beta,h) 
\mapsto g^\mathrm{ann}(\beta,h)$ and the annealed critical curve $\beta \mapsto 
h^\mathrm{ann}_c(\beta)$, both of which are explicitly computable. This comparison 
leads to:
\begin{itemize}
\item[(1)] 
A proof that $g^\mathrm{que}(\beta,h)<g^\mathrm{ann}(\beta,h)$ for all $\alpha\geq 1$ 
and $(\beta,h)$ in the annealed localized phase.
\item[(2)] 
A proof that $h_c^\mathrm{ann}(\beta/\alpha)<h_c^\mathrm{que}(\beta)<h_c^\mathrm{ann}
(\beta)$ for all $\alpha>1$ and $\beta>0$.
\item[(3)]
A proof that $\liminf_{\beta \downarrow 0} h_c^\mathrm{que}(\beta)/\beta \geq K_c^*$ 
with $K_c^* = (1+\alpha)/2\alpha$ for $\alpha\geq 2$ and $K_c^* = B(\alpha)/\alpha$ 
for $1<\alpha<2$ with $B(\alpha)>1$.
\item[(4)] 
An estimate of the total number of times the copolymer visits the interface in the 
interior of the quenched delocalized phase.
\item[(5)] 
An identification of the asymptotic frequency at which the copolymer visits the 
interface in the quenched localized phase.
\end{itemize}
The copolymer model has been studied extensively in the literature. The goal of the 
present paper is to open up a window with a variational view and to address a number 
of open problems. 

\medskip\noindent
{\it AMS} 2000 {\it subject classifications.} 60F10, 60K37, 82B27.\\
{\it Key words and phrases.} Copolymer, selective interface, free energy, critical curve, 
localization vs.\ delocalization, large deviation principle, variational formula, specific 
relative entropy.

\medskip\noindent
{\it Acknowledgment.} 
FdH thanks M.\ Birkner and F.\ Redig for fruitful discussions. EB was supported 
by SNSF-grant 20-100536/1, FdH by ERC Advanced Grant VARIS 267356, and AO by 
NWO-grant 613.000.913.

\end{abstract}


\section{Introduction and main results}
\label{S1}

In Section~\ref{S1.1} we define the model. In Sections~\ref{S1.2} and \ref{S1.3} we define 
the quenched and the annealed free energy and critical curve. In Section~\ref{S1.4} we state 
our main results, while in Section~\ref{S1.5} we place these results in the context of earlier 
work. For more background and key results in the literature, we refer the reader to 
Giacomin~\cite{Gi07}, Chapters 6--8, and den Hollander~\cite{dHo09}, Chapter 9.   


\subsection{A copolymer near a selective interface}
\label{S1.1}

Let $\omega = (\omega_k)_{k\in\N}$ be i.i.d.\ random variables with a probability distribution 
$\nu$ on $\R$ having zero mean and unit variance:
\begin{equation}
\label{nuzmuv}
\int_\R x\,\nu(dx) =0, \qquad \int_\R x^2\,\nu(dx) =1,
\end{equation}
and a finite cumulant generating function: 
\begin{equation}
\label{mgffin}
M(\lambda)=\log \int_\R e^{-\lambda x}\,\nu(dx) < \infty 
\qquad \forall\,\lambda\in\R.
\end{equation} 
Write $\P=\nu^{\otimes\N}$ to denote the distribution of $\omega$. Let
\begin{equation}
\label{Wndef}
\Pi = \big\{\pi=(k,\pi_k)_{k\in\N_0}\colon\,\pi_0=0,\,\pi_k \in \Z\,\,
\forall\,k\in\N\big\}.
\end{equation} 
denote the set of infinite directed paths on $\N_0\times\Z$ (with $\N_0=\N\cup\{0\}$).
Fix $n\in\N_0$ and $\beta,h \geq 0$. For given $\omega$, let
\begin{equation}
\label{copoldef}
H_n^{\beta,h,\omega}(\pi) 
= -\beta \sum_{k=1}^n (\omega_k+h)\,{\rm sign}(\pi_{k-1},\pi_k),
\qquad \pi \in \Pi,
\end{equation}
be the $n$-step Hamiltonian on $\Pi$, and let
\begin{equation}
\label{Pnomdef}
P_n^{\beta,h,\omega}(\pi) = \frac{1}{Z_n^{\beta,h,\omega}}\,
e^{-H_n^{\beta,h,\omega}(\pi)}\,P(\pi), 
\qquad \pi \in \Pi,
\end{equation}
be the $n$-step path measure on $\Pi$, where $P$ is any probability distribution on 
$\Pi$ under which the excursions away from the interface are i.i.d., lie with equal
probability above and below the interface, and have a length whose probability 
distribution $\rho$ on $\N$ has infinite support and a \emph{polynomial tail}
\begin{equation}
\label{rhocond}
\lim_{ {m\to\infty} \atop {\rho(m)>0} } \frac{\log\rho(m)}{\log m} = -\alpha
\mbox{ for some } \alpha \geq 1.
\end{equation}
Note that the Hamiltonian in (\ref{copoldef}) only depends on the signs of the excursions
and on their starting and ending points in $\omega$, not on their shape.

\begin{figure}[htbp] 
\vspace{-.5cm}
\begin{center}
\includegraphics[scale = 0.45]{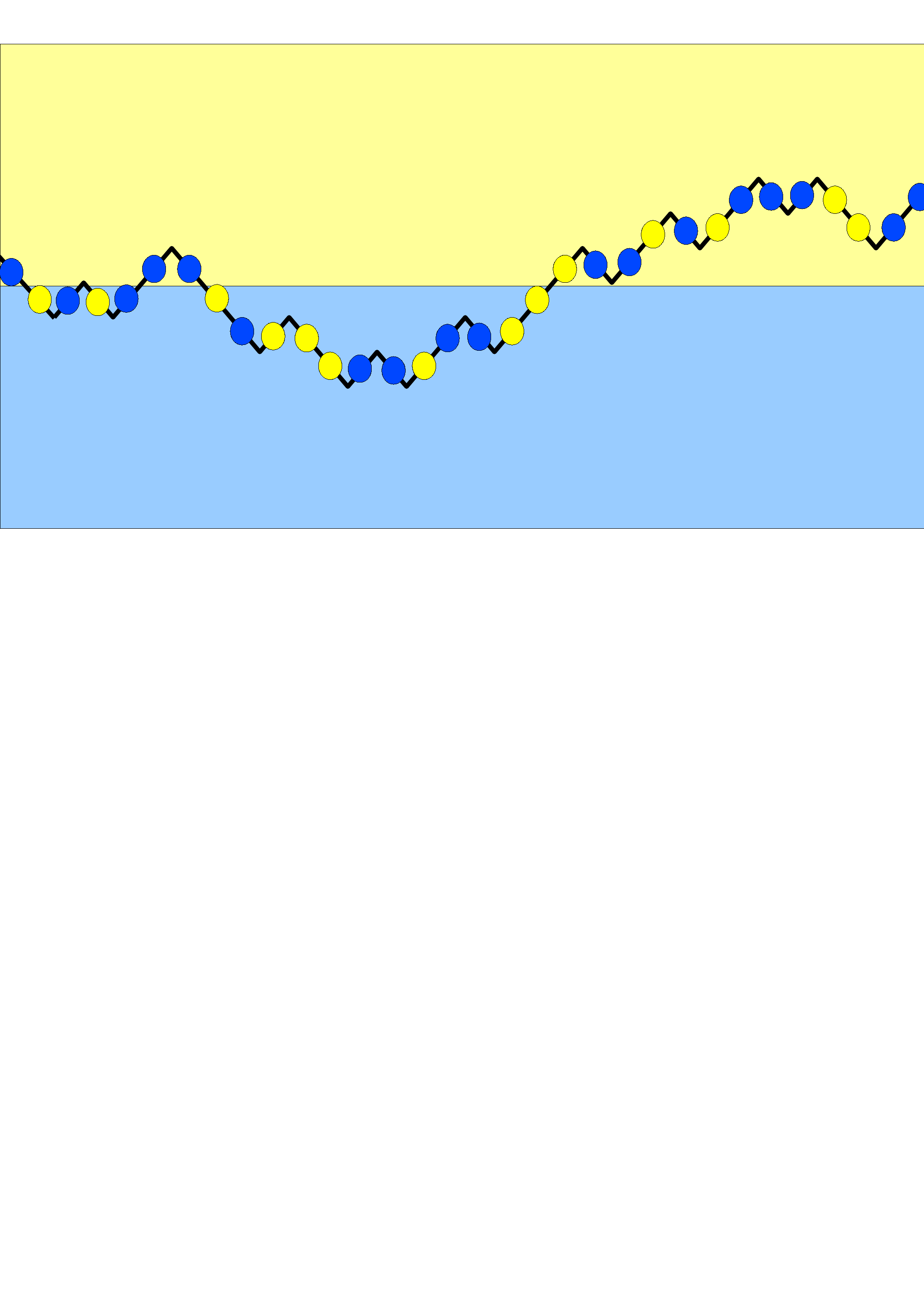}
\end{center}
\vspace{-8.2cm}
\caption{\small A directed copolymer near a linear interface. Oil in the upper
half plane and hydrophobic monomers in the polymer chain are shaded light, water 
in the lower half plane and hydrophilic monomers in the polymer chain are shaded 
dark. (Courtesy of N.\ P\'etr\'elis.)} 
\label{fig-copolex}
\end{figure}

\medskip\noindent
{\bf Example.}
For the special case where $\nu$ is the binary distribution $\nu(-1)=\nu(+1)=\tfrac12$
and $P$ is simple random walk on $\Z$, the above definitions have the following 
interpretation (see Fig.~\ref{fig-copolex}). Think of $\pi\in\Pi$ in (\ref{Wndef}) 
as the path of a directed copolymer on $\N_0\times\Z$, consisting of monomers 
represented by the edges $(\pi_{k-1},\pi_k)$, $k\in\N$, pointing  either north-east 
of south-east. Think of the lower half-plane as water and the upper half-plane as oil. 
The monomers are labeled by $\omega$, with $\omega_k=-1$ indicating that monomer $k$ 
is hydrophilic and $\omega_k=+1$ that it is hydrophobic. Both types occur with density 
$\tfrac12$. The factor ${\rm sign}(\pi_{k-1},\pi_k)$ in (\ref{copoldef}) equals $-1$ 
or $+1$ depending on whether monomer $k$ lies in the water or in the oil. The interaction 
Hamiltonian in (\ref{copoldef}) therefore rewards matches and penalizes mismatches of
the monomer types and the solvents. The parameter $\beta$ is the \emph{interaction strength} 
(or inverse temperature), the parameter $h$ plays the role of the \emph{interaction bias}: 
$h=0$ corresponds to the hydrophobic and hydrophilic monomers interacting equally strongly, 
while $h=1$ corresponds to the hydrophilic monomers not interacting at all. The probability 
distribution of the copolymer given $\omega$ is the quenched Gibbs distribution in 
(\ref{Pnomdef}). For simple random walk the support of $\rho$ is $2\N$ and the exponent 
is $\alpha=\tfrac32$: $\rho(2m) \sim 1/2\pi^{1/2}m^{3/2}$ as $m\to\infty$ (Feller~\cite{Fe68},
Chapter III).


\subsection{Quenched free energy and critical curve}
\label{S1.2}

The model in Section~\ref{S1.1} was introduced in Garel, Huse, Leibler and 
Orland~\cite{GaHuLeOr89}. It was shown in Bolthausen and den Hollander~\cite{BodHo97} 
that for every $\beta,h \geq 0$ the \emph{quenched free energy} per monomer
\begin{equation}
\label{freeenegdef}
f^\mathrm{que}(\beta,h) = \lim_{n\to\infty} \frac{1}{n} \log Z_n^{\beta,h,\omega}
\qquad \mbox{exists $\omega$-a.s.\ and in $L^1(\P)$, and is $\omega$-a.s.\ constant.}
\end{equation} 
It was further noted that 
\begin{equation}
\label{freeeneglb}
f^\mathrm{que}(\beta,h) \geq \beta h.
\end{equation}
This lower bound comes from the strategy where the path spends all of its time above 
the interface, i.e., $\pi_k>0$ for $1 \leq k \leq n$. Indeed, in that case ${\rm sign}
(\pi_{k-1},\pi_k)=+1$ for $1 \leq k \leq n$, resulting in $H_n^{\beta,h,\omega}(\pi) 
= -\beta h n[1+o(1)]$ $\omega$-a.s.\ as $n \to \infty$ by the strong law of large 
numbers for $\omega$ (recall (\ref{nuzmuv})). Since $P(\{\pi\in\Pi\colon\,\pi_k>0 
\mbox{ for } 1 \leq k \leq n\}) = \sum_{k>n} \rho(n) = n^{1-\alpha+o(1)}$ as $n\to\infty$ 
by (\ref{rhocond}), the cost of this strategy under $P$ is negligible on an exponential 
scale.

In view of (\ref{freeeneglb}), it is natural to introduce the \emph{quenched excess free 
energy}
\begin{equation}
\label{exfreeenegdef}
g^\mathrm{que}(\beta,h) = f^\mathrm{que}(\beta,h)-\beta h,
\end{equation}
to define the two phases
\begin{equation}
\label{queLDdef}
\begin{aligned}
\cD^\mathrm{que} &= \{(\beta,h)\colon\,g^\mathrm{que}(\beta,h) = 0\},\\
\cL^\mathrm{que} &= \{(\beta,h)\colon\,g^\mathrm{que}(\beta,h) > 0\},
\end{aligned}
\end{equation}
and to refer to $\cD^\mathrm{que}$ as the \emph{quenched delocalized phase}, where the 
strategy of staying above the interface is optimal, and to $\cL^\mathrm{que}$ as the 
\emph{quenched localized phase}, where this strategy is not optimal. The presence of 
these two phases is the result of a competition between entropy and energy: by staying 
close to the interface the copolymer looses entropy, but it gains energy because it 
can more easily switch between the two sides of the interface in an attempt to place 
as many monomers as possible in their preferred solvent.

General convexity arguments show that $\cD^\mathrm{que}$ and $\cL^\mathrm{que}$ are 
separated by a \emph{quenched critical curve} $\beta \mapsto h^\mathrm{que}_c(\beta)$ 
given by
\begin{equation}
\label{hcquedef}
h^\mathrm{que}_c(\beta) = \sup\{h \geq 0\colon\,g^\mathrm{que}(\beta,h)>0\}
= \inf\{h \geq 0\colon\,g^\mathrm{que}(\beta,h)=0\}, \qquad \beta \geq 0,
\end{equation}
with the property that $h_c^\mathrm{que}(0) = 0$, $\beta\mapsto h_c^\mathrm{que}(\beta)$ 
is strictly increasing and finite on $[0,\infty)$, and $\beta\mapsto\beta h_c^\mathrm{que}
(\beta)$ is strictly convex on $[0,\infty)$. Moreover, it is easy to check that 
$\lim_{\beta\to\infty} h_c^\mathrm{que}(\beta)=\sup[\mathrm{supp}(\nu)]$, the supremum 
of the support of $\nu$ (see Fig.~\ref{fig-critcurve}).

\begin{figure}[htbp]
\setlength{\unitlength}{0.35cm}
\begin{center}
\begin{picture}(12,12)(9,-1)
\put(0,0){\line(12,0){12}}
\put(0,0){\line(0,8){8}}
{\thicklines
\qbezier(6,0)(1,.3)(0,7)
\qbezier(6,0)(8,0)(11,0)
}
\put(-.8,-.8){$0$}  
\put(12.5,-0.2){$h$}
\put(-1,8.7){$g^\mathrm{que}(\beta,h)$}
\put(6,0){\circle*{.4}}
\put(5.5,-1.5){$h^\mathrm{que}_c(\beta)$}
\put(20,0){\line(12,0){12}}
\put(20,0){\line(0,8){8}}
{\thicklines
\qbezier(20,0)(21,5)(28,6)
}
\put(19.2,-.8){$0$}  
\put(32.5,-.2){$\beta$}
\put(19,8.7){$h_c^\mathrm{que}(\beta)$}
\put(20,0){\circle*{.4}}
\put(24.5,3){$\cL^\mathrm{que}$}
\put(20.7,5.5){$\cD^\mathrm{que}$}
\end{picture}
\end{center}
\caption{\small Qualitative pictures of $h\mapsto g^{\rm que}(\beta,h)$ for 
fixed $\beta>0$, respectively, $\beta \mapsto h^\mathrm{que}_c(\beta)$. The 
quenched critical curve is part of $\cD^\mathrm{que}$.}  
\label{fig-critcurve}
\end{figure}
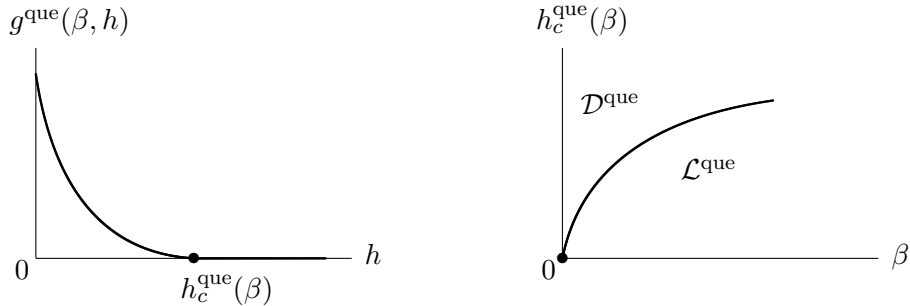

The following bounds are known for the quenched critical curve:
\begin{equation}
\label{hcbds}
\left(\tfrac{2\beta}{\alpha}\right)^{-1} M\left(\tfrac{2\beta}{\alpha}\right)
\leq h^\mathrm{que}_c(\beta) \leq (2\beta)^{-1} M(2\beta) \qquad \forall\,\beta > 0.
\end{equation}
The upper bound was proved in Bolthausen and den Hollander~\cite{BodHo97}, and comes 
from an annealed estimate on $\omega$. The lower bound was proved in Bodineau and 
Giacomin~\cite{BoGi04}, and comes from strategies where the copolymer dips below 
the interface during rare stretches in $\omega$ where the empirical density is 
sufficiently biased downwards. 

\medskip\noindent
{\bf Remark:}
In the literature $\rho$ is typically assumed to be \emph{regularly varying at 
infinity}, i.e., 
\begin{equation}
\label{rhocondalt}
\rho(m) = m^{-\alpha}L(m) \mbox{ for some } \alpha \geq 1
\mbox{ with } L \mbox{ slowly varying at infinity}.
\end{equation}
However, the proof of (\ref{hcbds}) in \cite{BodHo97} and \cite{BoGi04} can be extended 
to $\rho$ satisfying the \emph{much weaker} assumption in (\ref{rhocond}). In the 
literature $\nu$ is sometimes assumed to have Gaussian or sub-Gaussian tails, which 
is stronger than (\ref{mgffin}). Also this is not necessary for (\ref{hcbds}). Throughout 
our paper, \eqref{mgffin} and \eqref{rhocond} \emph{are the only conditions in force} 
(with a sole exception indicated later on).


\subsection{Annealed free energy and critical curve}
\label{S1.3}

Recalling (\ref{Wndef}--\ref{Pnomdef}), (\ref{freeenegdef}) and (\ref{exfreeenegdef}), 
and using that $\beta \sum_{k=1}^n (\omega_k+h) = \beta hn[1+o(1)]$ $\omega$-a.s.\ 
as $n\to\infty$, we see that the quenched excess free energy is given by  
\begin{equation}
\label{exfreeenegdefalt}
g^\mathrm{que}(\beta,h) = \lim_{n\to\infty} \frac{1}{n} 
\log \widetilde Z_n^{\beta,h,\omega}
\qquad \mbox{$\omega$-a.s.}
\end{equation}
with
\begin{equation}
\label{queexpartsum}
\widetilde Z_n^{\beta,h,\omega} = \sum_{\pi\in\Pi} P(\pi)\,
\exp\left[\beta \sum_{k=1}^n (\omega_k+h)\,[{\rm sign}(\pi_{k-1},\pi_k)-1]\right].
\end{equation}
In this partition sum only the excursions of the copolymer below the interface 
contribute. The annealed version of the model has partition sum
\begin{equation}
\label{annexpartsum}
\E(\widetilde Z_n^{\beta,h,\omega}) = \sum_{\pi\in\Pi} P(\pi)\,\prod_{k=1}^n
\left[1_{\{{\rm sign}(\pi_{k-1},\pi_k)=1\}} + e^{M(2\beta)-2\beta h}\,
1_{\{{\rm sign}(\pi_{k-1},\pi_k)=-1\}}\right],
\end{equation}
where $\E$ is expectation w.r.t.\ $\P$. The \emph{annealed excess free energy} 
is therefore given by
\begin{equation}
\label{annexfreeeneg}
g^\mathrm{ann}(\beta,h) = \lim_{n\to\infty} \frac{1}{n} 
\log \E(\widetilde Z_n^{\beta,h,\omega}).
\end{equation}
(\emph{Note:} In the annealed model the average w.r.t.\ $\P$ is taken on the partition 
sum $\widetilde Z_n^{\beta,h,\omega}$ in (\ref{queexpartsum}) rather than on the original 
partition sum $Z_n^{\beta,h,\omega}$ in (\ref{Pnomdef}).) The two corresponding phases 
are
\begin{equation}
\label{annLDdef}
\begin{aligned}
\cD^\mathrm{ann} &= \{(\beta,h)\colon\,g^\mathrm{ann}(\beta,h) = 0\},\\
\cL^\mathrm{ann} &= \{(\beta,h)\colon\,g^\mathrm{ann}(\beta,h) > 0\},
\end{aligned}
\end{equation}
which are referred to as the \emph{annealed delocalized phase}, respectively, the 
\emph{annealed localized phase}, and are separated by an \emph{annealed critical 
curve} $\beta \mapsto h^\mathrm{ann}_c(\beta)$ given by
\begin{equation}
\label{hcanndef}
h^\mathrm{ann}_c(\beta) = \sup\{h \geq 0\colon\,g^\mathrm{ann}(\beta,h)>0\}
= \inf\{h \geq 0\colon\,g^\mathrm{ann}(\beta,h)=0\}, \qquad \beta \geq 0.
\end{equation}

\begin{figure}[htbp]
\vspace{-.5cm}
\setlength{\unitlength}{0.35cm}
\begin{center}
\begin{picture}(12,12)(9,-1)
\put(0,0){\line(12,0){12}}
\put(0,0){\line(0,8){8}}
{\thicklines
\qbezier(0,6)(3,3)(6,0)
\qbezier(6,0)(8,0)(11,0)
}
\put(-.8,-.8){$0$}
\put(-3.8,5.8){$M(2\beta)$}  
\put(12.5,-0.2){$h$}
\put(-1,8.7){$g^\mathrm{ann}(\beta,h)$}
\put(6,0){\circle*{.4}}
\put(5.5,-1.5){$h^\mathrm{ann}_c(\beta)$}
\put(20,0){\line(12,0){12}}
\put(20,0){\line(0,8){8}}
{\thicklines
\qbezier(20,0)(21,5)(28,6)
}
\put(19.1,-.8){$0$}  
\put(32.5,-.2){$\beta$}
\put(19,8.7){$h_c^\mathrm{ann}(\beta)$}
\put(20,0){\circle*{.4}}
\put(24.5,3){$\cL^\mathrm{ann}$}
\put(20.7,5.5){$\cD^\mathrm{ann}$}
\end{picture}
\end{center}
\caption{\small Qualitative picture of $h\mapsto g^{\rm ann}(\beta,h)$ for 
fixed $\beta>0$, respectively, $\beta\mapsto h^\mathrm{ann}_c(\beta)$. The 
annealed critical curve is part of $\cD^\mathrm{ann}$.} 
\label{fig-gann}
\end{figure}
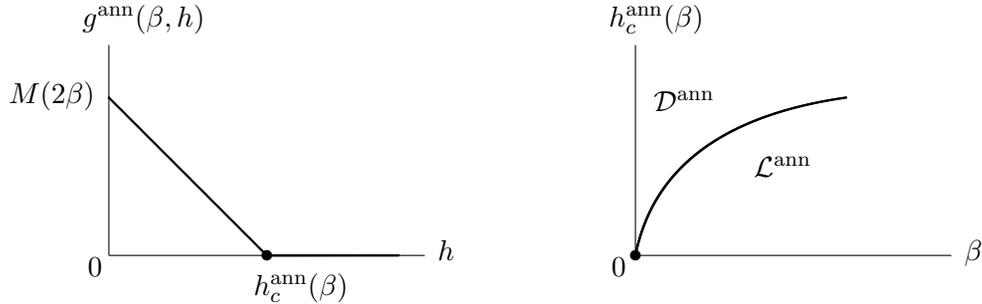

An easy computation based on (\ref{annexpartsum}) gives that (see Fig.~\ref{fig-gann})
\begin{equation}
\label{gannid}
g^\mathrm{ann}(\beta,h)= 0 \vee [M(2\beta)-2\beta h], \qquad \beta,h \geq 0,
\end{equation}
and
\begin{equation}
\label{hcannid}
h^\mathrm{ann}_c(\beta) = (2\beta)^{-1} M(2\beta), \qquad \beta>0. 
\end{equation}
Thus, the upper bound in (\ref{hcbds}) equals $h^\mathrm{ann}_c(\beta)$, while the 
lower bound equals $h^\mathrm{ann}_c(\beta/\alpha)$. 


\subsection{Main results}
\label{S1.4}

Our variational characterization of the excess free energies and the critical curves
are contained in the following theorem. 

\begin{theorem}
\label{freeenegvar} 
Assume {\rm (\ref{mgffin})} and {\rm (\ref{rhocond})}.\\ 
(i) For every $\beta,h>0$, there are lower semi-continuous, convex and non-increasing 
functions
\begin{equation}
\label{Sannounce}
\begin{aligned}
&g\mapsto S^\mathrm{que}(\beta,h;g),\\
&g\mapsto S^\mathrm{ann}(\beta,h;g),
\end{aligned}
\end{equation}
given by explicit variational formulas, such that
\begin{equation}
\label{gquegannvarfor}
\begin{aligned}
g^\mathrm{que}(\beta,h) &= \inf\{g\in\R\colon\,S^\mathrm{que}(\beta,h;g)<0\},\\
g^\mathrm{ann}(\beta,h) &= \inf\{g\in\R\colon\,S^\mathrm{ann}(\beta,h;g)<0\}.
\end{aligned}	
\end{equation} 
(ii) For every $\beta>0$, $g^\mathrm{que}(\beta,h)$ and $g^\mathrm{ann}(\beta,h)$ are the 
unique solutions of the equations
\begin{equation}
\label{Ssolg}
\begin{array}{lll}
&S^\mathrm{que}(\beta,h;g)=0  &\mbox{ for } 0<h\leq h_c^\mathrm{que}(\beta),\\
&S^\mathrm{ann}(\beta,h;g)=0  &\mbox{ for } h=h_c^{\rm ann}(\beta).
\end{array}
\end{equation}
(iii) For every $\beta>0$, $h_c^\mathrm{que}(\beta)$ and $h_c^\mathrm{ann}(\beta)$ are the 
unique solutions of the equations 
\begin{equation}
\label{Ssolh}
\begin{array}{ll}
&S^\mathrm{que}(\beta,h;0)=0,\\ 
&S^\mathrm{ann}(\beta,h;0)=0.
\end{array}
\end{equation} 
\end{theorem}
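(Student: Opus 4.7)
My plan is to construct $S^{\mathrm{que}}$ and $S^{\mathrm{ann}}$ via a large deviation principle (LDP) for the empirical process of excursions. The partition sum $\widetilde Z_n^{\beta,h,\omega}$ in (\ref{queexpartsum}) factorises over the successive excursions of $\pi$: an excursion of length $\ell$ that stays above the interface contributes weight $\tfrac12\rho(\ell)$, while one that stays below contributes $\tfrac12\rho(\ell)\,\exp\bigl[-2\beta\sum_{k\in I}(\omega_k+h)\bigr]$, where $I$ is the block of monomer indices it covers. Reading $\omega$ block by block along the excursions produces an empirical process of "words" whose letters are i.i.d.\ from $\nu$ and whose word-length law is $\rho$. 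For this object I invoke the specific-relative-entropy LDP of Birkner--Greven--den Hollander with rate function $I^{\mathrm{que}}(Q)$ on shift-invariant word processes $Q$. Varadhan's lemma then yields
\[
g^{\mathrm{que}}(\beta,h) = \sup_Q\bigl[\Phi^{\mathrm{que}}(\beta,h;Q) - I^{\mathrm{que}}(Q)\bigr],
\]
where $\Phi^{\mathrm{que}}$ is the per-monomer below-excursion energy averaged under $Q$. I then define
\[
S^{\mathrm{que}}(\beta,h;g) = \sup_Q\bigl[\Phi^{\mathrm{que}}(\beta,h;Q) - g - I^{\mathrm{que}}(Q)\bigr],
\]
which, as the supremum of an affine family in $g$, is convex, lower semi-continuous and non-increasing in $g$. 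The annealed counterpart follows the same template, replacing the BGdH LDP by Sanov's theorem for the empirical distribution of excursions under $P\otimes\P$ and collapsing the disorder into the single constant $M(2\beta)-2\beta h$ per below-step.

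For part (i), formula (\ref{gquegannvarfor}) is a direct rewriting: since $g$ enters with a negative sign, $g^\bullet(\beta,h) > g$ iff $S^\bullet(\beta,h;g) > 0$, so $g^\bullet(\beta,h) = \inf\{g\in\R : S^\bullet(\beta,h;g) < 0\}$. For part (ii), I use that $g \mapsto S^\bullet(\beta,h;g)$ is \emph{strictly} decreasing on its non-negative locus: if $S^\bullet(\beta,h;g_1) = S^\bullet(\beta,h;g_2) = 0$ with $g_1 < g_2$, then any near-maximiser $Q$ at $g_2$ yields $S^\bullet(\beta,h;g_1) \geq g_2-g_1 > 0$, contradiction. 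Hence $S^\bullet(\beta,h;\cdot)$ has a unique zero, and by part (i) this zero is $g^\bullet(\beta,h)$. In the localised phase this gives the statement directly; at the critical curves one appeals to continuity of $S^\bullet$ in $g$ (lower semi-continuity together with convexity) combined with the fact that $g^\bullet(\beta,h)=0$ there by definition.

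Part (iii) follows from part (ii) by specialising to $g=0$ and using that, by (\ref{queLDdef}), (\ref{annLDdef}), (\ref{hcquedef}) and (\ref{hcanndef}), $h_c^\bullet(\beta)$ is precisely the unique $h$ at which $g^\bullet(\beta,h)$ transitions from positive to zero. Combined with part (ii) this forces $S^\bullet(\beta,h_c^\bullet(\beta);0)=0$; uniqueness in $h$ is inherited from the strict monotonicity of $h\mapsto S^\bullet(\beta,h;0)$, which follows from the linear dependence of $\Phi^\bullet$ on $h$ (each below-step carries weight proportional to $-h$).

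The principal obstacle I anticipate is the Varadhan step in the quenched setting. The per-excursion energy is sub-Gaussian in $\omega$ but grows linearly in the excursion length, while $\rho$ decays only polynomially by (\ref{rhocond}), so the reward functional is neither bounded nor continuous in the process-level topology. Establishing the upper-bound half of Varadhan's lemma therefore requires a truncation scheme that cuts off excursions of length exceeding $L$, applies the BGdH LDP to the truncated process, and lets $L\to\infty$ while controlling the tail contributions through (\ref{mgffin}). Identifying $\Phi^{\mathrm{que}}$ and $I^{\mathrm{que}}$ in the right process-level coordinates (i.e.\ unfolding the excursion variables so that the BGdH framework applies) is where the real work lies; once the variational formula in part (i) is in hand, parts (ii) and (iii) are essentially formal consequences of convexity and strict monotonicity.
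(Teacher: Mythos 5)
There is a genuine gap, and it sits at the heart of your construction. The BGdH LDP for $R_N^\omega$ runs at rate $N$, the number of excursions (words), while the free energy is per monomer (letter); since the number of excursions in an $n$-step path is itself random, you cannot apply Varadhan's lemma at fixed $n$ to get $g^\mathrm{que}(\beta,h)=\sup_Q[\Phi-I^\mathrm{que}(Q)]$, and your subsequent definition $S(\beta,h;g)=\sup_Q[\Phi-g-I^\mathrm{que}(Q)]$ with penalty $-g$ rather than $-g\,m_Q$ mixes the two scales (it is just a vertical shift of a per-excursion quantity, so its sign change does not locate the per-monomer free energy). The paper's actual route is the grand-canonical identity $\sum_n e^{-gn}\widetilde Z_{n,0}^{\beta,h,\omega}=\sum_N F_N^{\beta,h,\omega}(g)$ (see \eqref{ZFNrel}--\eqref{FNdef}): the tilt $e^{-gn}$ turns into the per-word cost $g\tau(y)$, the LDP applied at scale $N$ gives $S^\mathrm{que}(\beta,h;g)=\sup_Q[\Phi_{\beta,h}(Q)-g\,m_Q-I^\mathrm{ann}(Q)]$ over $\cC^\mathrm{fin}\cap\cR$, and $g^\mathrm{que}$ is read off as the radius of convergence, i.e.\ the $g$ at which this per-excursion quantity changes sign. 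Moreover your choice of rate function is wrong for $g>0$: tilting $\rho$ to $\rho_g$ produces an exponentially bounded word-length law, for which the quenched rate function is $I^\mathrm{ann}$ restricted to $\cR$ (Theorem~\ref{qLDP}), not the unrestricted $I^\mathrm{que}$; the latter only enters at $g=0$, and proving that the restriction to $\cR$ can be dropped there ($S^\mathrm{que}(\beta,h;0)=S_*^\mathrm{que}(\beta,h)$) is a separate approximation argument (Appendix~\ref{appC}). A concrete failure of your formula: for $\alpha=1$ one has $I^\mathrm{que}=I^\mathrm{ann}$, so your $S$ coincides with the annealed functional and your recipe returns $g^\mathrm{ann}(\beta,h)$, contradicting Corollary~\ref{freeeneggap} ($g^\mathrm{que}<g^\mathrm{ann}$ in $\cL^\mathrm{ann}$); the paper stresses exactly this point, that for $\alpha=1$ the critical curves coincide but the free energies do not.

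Beyond this, parts (ii)--(iii) for the quenched model are not ``essentially formal''. One needs: finiteness of $S^\mathrm{que}(\beta,h;g)$ for $g>0$ (the $\omega$-a.s.\ bound of Appendix~\ref{appA}, via a concentration-of-measure estimate), the right-continuity $\bar S^\mathrm{que}(\beta,h;0+)=S^\mathrm{que}(\beta,h;0)$ at the boundary of the effective domain (convexity plus lower semi-continuity do not give this, since $S=\infty$ on $(-\infty,0)$), and Theorem~\ref{asymptote} (i.e.\ the analysis of Section~\ref{S6}) to know where $h\mapsto S^\mathrm{que}(\beta,h;0)$ is infinite, positive and finite, which is what makes the zero in $h$ unique in part (iii). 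Your claimed strict monotonicity in $h$ ``by linearity of $\Phi$ in $h$'' is also not right: $\log\phi_{\beta,h}(y)=\log\tfrac12\bigl(1+e^{-2\beta h\tau(y)-2\beta\sigma(y)}\bigr)$ is convex and non-increasing in $h$, not linear, and strict decrease of the supremum has to be argued on the region where it is finite. Your uniqueness-of-zero argument in $g$ is fine in spirit (it is the paper's use of $m_Q\geq 1$), but only after the formula is repaired as above.
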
 
 
\noindent
The variational formulas for $S^\mathrm{que}(\beta,h;g)$ and $S^\mathrm{ann}(\beta,h;g)$ 
are given in Theorem~\ref{varfloc}, respectively, Theorem~\ref{varfloc1} in Section~\ref{S3}. 
Figs.~\ref{fig-varfe}--\ref{fig-varhcbar} in Section~\ref{S3} show how these functions depend 
on $\beta,h$ and $g$, which is crucial for our analysis.

Next we state six corollaries that are consequences of the variational formulas. The first
three corollaries are strict inequalities for the excess free energies and the critical 
curves.

\begin{corollary}
\label{freeeneggap} 
$g^\mathrm{que}(\beta,h) < g^\mathrm{ann}(\beta,h)$ for all $(\beta,h) \in \cL^\mathrm{ann}$.
\end{corollary}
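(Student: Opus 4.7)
The idea is to reduce the two-dimensional strict inequality to a \emph{single-point} comparison of the functions $S^\mathrm{que}(\beta,h;\cdot)$ and $S^\mathrm{ann}(\beta,h;\cdot)$ at $g=g^\mathrm{ann}(\beta,h)$, and then to exhibit the gap using the strict inequality between the quenched and the annealed specific relative entropies that enter the variational formulas of Theorems~\ref{varfloc}--\ref{varfloc1}.

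Fix $(\beta,h)\in\cL^\mathrm{ann}$. By (\ref{gannid}), $g^\mathrm{ann}(\beta,h)=M(2\beta)-2\beta h>0$. If $g^\mathrm{que}(\beta,h)=0$ we are done, so assume $(\beta,h)\in\cL^\mathrm{que}$. By Theorem~\ref{freeenegvar}(ii), $g^\mathrm{que}(\beta,h)$ and $g^\mathrm{ann}(\beta,h)$ are the \emph{unique} zeros of the non-increasing convex functions $g\mapsto S^\mathrm{que}(\beta,h;g)$ and $g\mapsto S^\mathrm{ann}(\beta,h;g)$. Because each $S^*(\beta,h;\cdot)$ is strictly negative above and strictly positive below its unique zero, the corollary is equivalent to the single-point inequality
\[
S^\mathrm{que}\bigl(\beta,h;g^\mathrm{ann}(\beta,h)\bigr) \;<\; 0.
\]

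To prove this I would use the variational representations of Theorems~\ref{varfloc} and~\ref{varfloc1}, which (as announced in the introduction and consistent with the keyword \emph{specific relative entropy}) express both rate functions as constrained suprema of a common form
\[
S^*(\beta,h;g) \;=\; \sup_{Q\in\cC}\,\big[\Phi(\beta,h,g;Q)-I^*(Q)\big],\qquad *\in\{\mathrm{que},\mathrm{ann}\},
\]
over a common class $\cC$ of shift-invariant laws on disorder-labelled excursion words, with a common energy/tilt functional $\Phi$ and with $I^\mathrm{que}(Q)\ge I^\mathrm{ann}(Q)$ being the two Birkner-type specific relative entropies (quenched vs.\ its factored $\omega$-marginal). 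Using $S^\mathrm{ann}(\beta,h;g^\mathrm{ann})=0$, the chain
\[
\Phi(\beta,h,g^\mathrm{ann};Q)-I^\mathrm{que}(Q) \;\le\; \Phi(\beta,h,g^\mathrm{ann};Q)-I^\mathrm{ann}(Q) \;\le\; S^\mathrm{ann}(\beta,h;g^\mathrm{ann}) \;=\; 0
\]
gives, after taking the supremum in $Q$, the weak inequality $S^\mathrm{que}(\beta,h;g^\mathrm{ann})\le0$.

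The strict upgrade is the heart of the argument and the step I expect to be the main obstacle. I would argue that any (near-)maximizer $Q^*$ of the annealed problem at $g=g^\mathrm{ann}>0$ must have an $\omega$-marginal \emph{strictly different} from the i.i.d.\ reference $\nu^{\otimes\N}$: otherwise $\Phi(\beta,h,g^\mathrm{ann};Q^*)$ would reduce to a disorder-free quantity that cannot sustain the localized annealed free-energy excess, contradicting $(\beta,h)\in\cL^\mathrm{ann}$. For such $Q^*$ the Birkner inequality $I^\mathrm{que}\ge I^\mathrm{ann}$ is strict, whence
\[
S^\mathrm{que}(\beta,h;g^\mathrm{ann}) \;\le\; -\bigl[I^\mathrm{que}(Q^*)-I^\mathrm{ann}(Q^*)\bigr] \;<\; 0.
\]
The technical difficulty lies in making this gap uniform along a maximizing sequence if the supremum is not attained; this requires tightness of annealed near-maximizers and lower semi-continuity of the entropy difference $I^\mathrm{que}-I^\mathrm{ann}$ away from the product reference, both of which should be extractable from the LDP and compactness structure developed in Section~\ref{S3}.
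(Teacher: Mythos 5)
Your reduction to the single-point inequality $S^\mathrm{que}\bigl(\beta,h;g^\mathrm{ann}(\beta,h)\bigr)<0$ and the weak comparison $S^\mathrm{que}\leq S^\mathrm{ann}$ (which in the paper follows simply because the supremum in \eqref{Sdef} runs over $\cC^\mathrm{fin}\cap\cR$ while that in \eqref{Sandef} runs over all of $\cC^\mathrm{fin}$, with the same integrand) are the same first moves as the paper's. The genuine gap is in your strictness step: you assume $S^\mathrm{ann}\bigl(\beta,h;g^\mathrm{ann}(\beta,h)\bigr)=0$, but this is false in the interior of $\cL^\mathrm{ann}$. By Theorem~\ref{freeenegvar}(ii) (see \eqref{Ssolg}) the annealed equation $S^\mathrm{ann}(\beta,h;g)=0$ is solved by $g^\mathrm{ann}(\beta,h)$ only at $h=h_c^\mathrm{ann}(\beta)$; for $0<h<h_c^\mathrm{ann}(\beta)$ one has $g^\mathrm{ann}(\beta,h)=M(2\beta)-2\beta h>0$ and, by \eqref{Nid} and \eqref{Subprojeq}, $S^\mathrm{ann}\bigl(\beta,h;g^\mathrm{ann}(\beta,h)\bigr)=\log\bigl[\tfrac12+\tfrac12\cN\bigl(g^\mathrm{ann}(\beta,h)\bigr)\bigr]<0$, i.e., the annealed function jumps from a \emph{strictly negative} value to $+\infty$ at $g^\mathrm{ann}(\beta,h)$ (Fig.~\ref{fig-varfe1}, case (1)) and has no zero at all there. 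So the chain of inequalities you build on ``$S^\mathrm{ann}(\beta,h;g^\mathrm{ann})=0$'' starts from a false identity, and your claim that each function is ``strictly positive below its unique zero'' fails on the annealed side.

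Correcting this removes the need for the hard step you flag as the heart of the argument: since $S^\mathrm{ann}\bigl(\beta,h;g^\mathrm{ann}(\beta,h)\bigr)<0$ already, the weak inequality gives $S^\mathrm{que}\bigl(\beta,h;g^\mathrm{ann}(\beta,h)\bigr)<0$ at once, and combining this with $S^\mathrm{que}\bigl(\beta,h;g^\mathrm{que}(\beta,h)\bigr)=0$ (valid for $0<h\leq h_c^\mathrm{que}(\beta)$) and the continuity and strict decrease of $g\mapsto S^\mathrm{que}(\beta,h;g)$ on $(0,\infty)$ yields $g^\mathrm{que}(\beta,h)<g^\mathrm{ann}(\beta,h)$ — this is exactly the paper's proof. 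As written, your proposed strict-entropy upgrade is also not a proof in its own right: for $g>0$ the quenched and annealed variational formulas differ only through the restriction $Q\in\cR$ (both involve $I^\mathrm{ann}$, since $I^\mathrm{que}=I^\mathrm{ann}$ on $\cR$), and showing that this restriction strictly lowers the supremum requires controlling near-maximizing sequences: the supremum need not be attained, and a restricted supremum can coincide with the unrestricted one even when the unrestricted maximizer $q_{\beta,h;g}^{\otimes\N}$ of \eqref{qsol} lies outside $\cR$. You acknowledge the missing tightness and lower-semicontinuity arguments but do not supply them, so that step stands as a gap — and, given the observation above, an unnecessary one.
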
 

\begin{corollary}
\label{hcubstrict} 
If $\alpha>1$, then $h^\mathrm{que}_c(\beta) < h^\mathrm{ann}_c(\beta)$ for all 
$\beta>0$.
\end{corollary}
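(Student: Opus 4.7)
The non-strict inequality $h_c^{\mathrm{que}}(\beta)\leq h_c^{\mathrm{ann}}(\beta)$ is a direct consequence of Jensen's inequality applied to the partition sums in \eqref{queexpartsum}--\eqref{annexpartsum}, which gives $g^{\mathrm{que}}(\beta,h)\leq g^{\mathrm{ann}}(\beta,h)$ for all $(\beta,h)$. The plan for upgrading this to a strict inequality when $\alpha>1$ is to exploit the uniqueness clause in Theorem~\ref{freeenegvar}(iii): since $h_c^{\mathrm{que}}(\beta)$ is the unique zero of the non-increasing map $h\mapsto S^{\mathrm{que}}(\beta,h;0)$, it is enough to establish
\[
S^{\mathrm{que}}\bigl(\beta,\,h_c^{\mathrm{ann}}(\beta);\,0\bigr) < 0,
\]
which by uniqueness and monotonicity forces $h_c^{\mathrm{que}}(\beta) < h_c^{\mathrm{ann}}(\beta)$.

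The plan is to deduce this strict inequality by comparing the variational formulas for $S^{\mathrm{que}}$ and $S^{\mathrm{ann}}$ announced in Section~\ref{S3}. Schematically, $S^{\mathrm{que}}$ will appear as the supremum, over shift-invariant joint laws $Q$ of (excursion, disorder) sequences, of an affine functional of $Q$ minus a specific relative entropy penalty on the $\omega$-marginal of $Q$; the annealed $S^{\mathrm{ann}}$ is the same supremum with this $\omega$-entropy penalty removed (equivalently, a supremum over path marginals alone). Hence $S^{\mathrm{que}}\leq S^{\mathrm{ann}}$ pointwise. To produce the strict inequality at $h^{*}:=h_c^{\mathrm{ann}}(\beta)$, I plan to exhibit a trial law $Q^{*}$ that (i)~nearly attains the annealed supremum $S^{\mathrm{ann}}(\beta,h^{*};0)=0$, and (ii)~has an $\omega$-marginal that is a strict exponential tilt of $\nu^{\otimes\N}$. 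Substituting $Q^{*}$ into the quenched functional then charges a strictly positive amount of specific relative entropy and yields the desired gap.

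The hypothesis $\alpha>1$ enters precisely at the construction of $Q^{*}$: the excursion-length cost $-\log\rho(L)\sim\alpha\log L$ is heavy enough for the annealed optimization over dip lengths to admit a nondegenerate finite optimum with a positive density of dips per unit time, each dip carrying an exponentially tilted $\omega$-law; this tilt has strictly positive specific relative entropy against $\nu^{\otimes\N}$, which is exactly the gap I need. (For $\alpha=1$ the corresponding optimization degenerates, consistent with the fact that the strict inequality is not claimed there.) The main obstacle will be to pin down $Q^{*}$ with enough precision inside the variational framework of Section~\ref{S3} to verify properties (i) and (ii) simultaneously; in particular, ruling out that the annealed supremum at $(\beta,h^{*};0)$ is saturated solely by the trivial ``all-above'' measure (whose $\omega$-marginal is untilted and yields no gap) will require a careful analysis of the annealed maximizers at criticality, using the explicit structure of the variational formulas described in Theorems~\ref{varfloc}--\ref{varfloc1}.
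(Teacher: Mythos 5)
Your overall strategy coincides with the paper's: reduce the corollary to showing $S^\mathrm{que}(\beta,h_c^\mathrm{ann}(\beta);0)<0$ and extract the gap from the extra term $(\alpha-1)\,m_Q\,H(\Psi_Q\mid\nu^{\otimes\N})$ by which the quenched rate function exceeds the annealed one. But the step you propose for producing the strict inequality is logically inverted. $S^\mathrm{que}(\beta,h^*;0)$ is a \emph{supremum}; substituting one well-chosen trial law $Q^*$ (a near-maximizer of the annealed problem with tilted $\omega$-marginal) into the quenched functional only bounds that supremum from \emph{below}, and showing the quenched functional is strictly negative at $Q^*$ says nothing about the other $Q$'s: a priori there could be measures whose annealed value is only slightly below $0$ but whose concatenated letter statistics are exactly those of $\nu^{\otimes\N}$, so that they pay no entropy penalty and push the quenched supremum up to $0$. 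What you must show is that \emph{every} admissible $Q$ pays either an annealed deficit or an entropy penalty, uniformly. The paper gets this from an exact identity rather than from a qualitative analysis of maximizers: using the alternative formula $S^\mathrm{que}(\beta,h;0)=S_*^\mathrm{que}(\beta,h)$ of \eqref{Sdefalt} (supremum over $\cC^\mathrm{fin}$, no restriction to $\cR$), projecting onto the first word and first letter via \eqref{red1}--\eqref{red2}, and noting that $\cN(\beta,h^*)=1$ at $h^*=h_c^\mathrm{ann}(\beta)$, the annealed part of the objective equals $-h(q\mid q^*_{\beta,h^*})$ exactly, so that
\[
S_*^\mathrm{que}(\beta,h^*)\;\leq\;-\inf_{q\in\cP(\widetilde{E}),\,m_q<\infty}
\Big[h\big(q\mid q^*_{\beta,h^*}\big)+(\alpha-1)\,m_q\,h\big(\pi_1\psi_q\mid\nu\big)\Big],
\]
and the infimum is strictly positive because the first term forces $q$ towards $q^*_{\beta,h^*}$, whose letter marginal is $\tfrac12\nu+\tfrac12\nu_\beta\neq\nu$, so the two terms cannot vanish simultaneously. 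This uniform statement is exactly what your plan lacks; your closing remark about ``analysing the annealed maximizers at criticality'' gestures at it, but exhibiting one tilted near-maximizer does not do the job — you need that all near-optimizers of the \emph{quenched} problem are forced into the tilted regime.

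A secondary point: in the paper the hypothesis $\alpha>1$ enters only through the coefficient $(\alpha-1)$ multiplying the extra entropy term (for $\alpha=1$ the quenched and annealed rate functions coincide and the argument yields nothing), not through any nondegeneracy of an annealed dip-length optimization; the annealed maximizer $q^*_{\beta,h}(dy)\propto q_{\rho,\nu}(dy)\,\phi_{\beta,h}(y)$ exists and has a tilted letter marginal for every $\alpha\geq 1$, so your heuristic for where $\alpha>1$ enters is off target. The concluding step you invoke (uniqueness of the zero and monotonicity of $h\mapsto S^\mathrm{que}(\beta,h;0)$, i.e.\ Theorem~\ref{freeenegvar}(iii)) is indeed how the paper closes the argument, using $S_*^\mathrm{que}(\beta,h_c^\mathrm{que}(\beta))=0$ and strict monotonicity on $(h_c^\mathrm{ann}(\beta/\alpha),\infty)$.
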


\begin{corollary}
\label{hclbstrict}
If $\alpha>1$, then $h^\mathrm{que}_c(\beta) >  h^\mathrm{ann}_c(\beta/\alpha)$ 
for all $\beta>0$.
\end{corollary}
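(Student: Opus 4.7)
The plan is to use the variational characterization. By Theorem~\ref{freeenegvar}(i) and the fact that $g \mapsto S^\mathrm{que}(\beta,h;g)$ is non-increasing, the condition $g^\mathrm{que}(\beta,h)>0$ is equivalent to $S^\mathrm{que}(\beta,h;0)>0$, and hence $(\beta,h)\in\cL^\mathrm{que}$, i.e., $h<h^\mathrm{que}_c(\beta)$. Consequently, the strict inequality $h^\mathrm{que}_c(\beta)>h^\mathrm{ann}_c(\beta/\alpha)$ is equivalent to
$$S^\mathrm{que}\bigl(\beta,\,h^\mathrm{ann}_c(\beta/\alpha);\,0\bigr) > 0.$$
The Bodineau--Giacomin lower bound in \eqref{hcbds} already gives the weak inequality $S^\mathrm{que}(\beta,h^\mathrm{ann}_c(\beta/\alpha);0)\geq 0$, so the task reduces to excluding equality.

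First I would translate the Bodineau--Giacomin rare-stretch strategy into the language of the quenched variational formula of Section~\ref{S3}: it corresponds to a sequence of trial measures $Q_L$ that place a single long excursion of length $L$ below the interface during an atypical block of $\omega$'s with empirical mean $-b$. Optimizing over $b$ and sending $L\to\infty$ recovers, via the Cram\'er rate function of $\nu$ applied at scale $L$ against the entropy cost $\alpha\log L$ dictated by \eqref{rhocond}, exactly the threshold $h=h^\mathrm{ann}_c(\beta/\alpha)$. Crucially, this optimum is \emph{not attained} for any finite $L$: the limit $L=\infty$ lies outside the feasible set of the variational formula. This degeneracy is the hint that strict improvement should be available at $h=h^\mathrm{ann}_c(\beta/\alpha)$.

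To make this rigorous I would exhibit a modified trial measure whose objective value is manifestly strictly positive. Two natural constructions are: (a) a convex combination of $Q_L$'s across multiple scales, exploiting the polynomial tail \eqref{rhocond} with $\alpha>1$ to reduce the effective relative entropy below the single-scale value; or (b) a local tilt of $Q_L$ that deforms the excursion-length distribution without affecting the leading-order energy gain, again producing a strictly positive remainder only when $\alpha>1$. In either case the improvement should materialize after the leading $L$-linear terms cancel, and the hypothesis $\alpha>1$ should enter precisely through the subleading behavior of $-\log\rho(L)=\alpha\log L+o(\log L)$, which must dominate (rather than tie) a competing subleading correction.

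The main obstacle I anticipate is identifying a perturbation whose objective admits a closed-form evaluation that makes strict positivity transparent, and doing so uniformly in $\beta>0$. If the direct construction proves too delicate, a proof by contradiction would be a cleaner fallback: assume equality of the two critical curves, use Theorem~\ref{freeenegvar}(iii) to produce a maximizer $Q^\star$ of the quenched variational problem at $h=h^\mathrm{ann}_c(\beta/\alpha)$, and derive a contradiction by showing that the only candidate $Q^\star$ (which must in some sense degenerate into an annealed-at-rate-$\beta/\alpha$ maximizer) fails a feasibility condition of the quenched formula as soon as $\alpha>1$. The crisp reason why $\alpha=1$ is excluded ought to emerge naturally from this argument as the boundary case in which the subleading correction vanishes.
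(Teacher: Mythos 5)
Your opening reduction is correct and coincides with the paper's: by Theorem~\ref{freeenegvar}(iii) and the monotonicity of $h\mapsto S^\mathrm{que}(\beta,h;0)$, the corollary is equivalent to $S^\mathrm{que}\big(\beta,h_c^\mathrm{ann}(\beta/\alpha);0\big)>0$, which is exactly the middle case of Theorem~\ref{asymptote}. Beyond that point, however, there is a genuine gap: you never exhibit a trial measure (or any other mechanism) that makes the supremum strictly positive at the critical value, and the mechanisms you sketch point in the wrong direction. In the paper the strict gain does \emph{not} come from the subleading term $-\log\rho(L)=\alpha\log L+o(\log L)$: that term enters with a negative sign (it is an entropy cost), and the single-long-excursion family with tilted letters (the variational incarnation of the Bodineau--Giacomin strategy, used in Section~\ref{S6.2}) gives at $h=h_c^\mathrm{ann}(\beta/\alpha)$ only a lower bound whose per-excursion value tends to $0$; it cannot by itself separate the curves. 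The actual proof (Section~\ref{S6.3}) rests on three ingredients you do not have: (1) the alternative formula \eqref{Sdefalt}, in which the supremum runs over $\cC^\mathrm{fin}$ \emph{without} the constraint $Q\in\cR$ --- essential, since the relevant trial laws tilt the letter marginal and hence leave $\cR$; (2) the identity \eqref{arg2} from \cite{BiGrdHo10}, which yields $I^\mathrm{que}(Q)\leq\alpha\,H(Q\mid q_{\rho,\nu}^{\otimes\N})$ and reduces the problem to a single-word variational problem whose optimizer is $q\propto\phi_{\beta,h}^{1/\alpha}\,q_{\rho,\nu}$, giving $S_*^\mathrm{que}\geq\alpha\log E(f_\alpha(Z))$ with $E(Z)=1$ and $f_\alpha$ as in \eqref{falphadef}; (3) strict convexity of $f_\alpha$ (here, and only here, $\alpha>1$ enters, since $f_\alpha''\propto\alpha-1$), so that Jensen gives $E(f_\alpha(Z))>f_\alpha(1)=1$. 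Thus the gain is an order-one-per-excursion fluctuation gain coming from the curvature of $f_\alpha$, uniform in $\beta$ and insensitive to the fine tail of $\rho$, not a subleading $\log L$ effect.

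Concerning your specific suggestions: (a) in the reduced single-word problem the functional $q\mapsto\int q\,\log\phi_{\beta,h}-c\,h(q\mid q_{\rho,\nu})$ is concave, so mixtures of the $Q_L$'s across scales cannot beat the best single scale, and in the full problem mixing buys at most an entropy reduction of the order of the logarithm of the number of scales per word, with no identified compensating gain; (b) a deformation of the excursion-length law that keeps the leading energy cannot manufacture positivity out of the term $\alpha\log L$, which is a cost; and your fallback by contradiction presupposes a maximizer $Q^\star$ of the quenched variational problem at criticality, which Theorem~\ref{freeenegvar} does not provide (the suprema need not be attained; compare the discussion of maximizing sequences in Section~\ref{S3.2}). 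So while the reduction step matches the paper, the heart of the argument --- the $\alpha$-fold entropy bound, the removal of the $\cR$-constraint via \eqref{Sdefalt}, and the strict Jensen inequality for $f_\alpha$ --- is missing from the proposal.
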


The fourth corollary concerns the slope of the quenched critical curve at $\beta=0$. 
For $1<\alpha<2$, let
\begin{equation}
\label{IaBdef}
I_\alpha(B) = \int_0^\infty dy\,y^{-\alpha}\,[E_\alpha(y,B)-1], \qquad B \geq 1,
\end{equation}
where
\begin{equation}
\label{EalyBdef}
E_\alpha(y,B) = \int_\R dx\,\frac{1}{\sqrt{2\pi}}\,e^{-\tfrac12 x^2}\,
f_\alpha\left(e^{-2By-2\sqrt{y}x}\right)
\end{equation}
with
\begin{equation}
\label{faldef}
f_\alpha(z)=\big\{\tfrac12(1+z^\alpha)\big\}^{1/\alpha},
\end{equation}
and let $1<B(\alpha)<\infty$ be the unique solution of the equation $I_\alpha(B)=0$.
We say that $\rho$ is asymptotically periodic when there exists a $p\in\N$ such that
$\rho(m)>0$ if and only if $m \in p\,\N$ for $m$ large enough.

\begin{corollary}
\label{Kcslope}
Suppose that $\rho$ is asymptotically periodic. Suppose further that either 
$m_\rho<\infty$, or $m_\rho=\infty$ and $\rho$ is regularly varying at 
infinity (i.e., {\rm \eqref{rhocondalt}} holds along the support of $\rho$). 
Then $\liminf_{\beta\downarrow 0} h^\mathrm{que}_c(\beta)/\beta \geq K_c^*$ 
with (see Fig.~{\rm \ref{fig-slopeplot}})
\begin{equation}
\label{Kcvalue}
K_c^* = K_c^*(\alpha) = \left\{\begin{array}{ll}
\frac{1}{\alpha}B(\alpha), 
&\text{ for } 1<\alpha<2,\\[0.2cm]
\frac{1+\alpha}{2\alpha}, 
&\text{ for } \alpha \geq 2.
\end{array}
\right.
\end{equation}
\end{corollary}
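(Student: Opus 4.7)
The plan is to use Theorem~\ref{freeenegvar}(iii): $h_c^{\mathrm{que}}(\beta)$ is the unique $h$ solving $S^{\mathrm{que}}(\beta,h;0)=0$, and because $h \mapsto S^{\mathrm{que}}(\beta,h;0)$ is non-increasing, to establish $\liminf_{\beta\downarrow 0} h_c^{\mathrm{que}}(\beta)/\beta \geq K_c^*$ it suffices to show that, for every $K<K_c^*$, $S^{\mathrm{que}}(\beta,K\beta;0)>0$ for all sufficiently small $\beta>0$. This reduces the corollary to exhibiting a single test element in the variational formula for $S^{\mathrm{que}}$ (Theorem~\ref{varfloc}) whose value stays strictly positive under the weak-coupling rescaling $\beta\downarrow 0$.

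For this I would adopt the \emph{fractional-moment} test measure of order $1/\alpha$: inside the variational functional the excursions are handled independently, with the contribution of an excursion of length $m$ being the $L^\alpha$-mixture $f_\alpha(e^{-2\beta\sum_{k\in\mathrm{exc}}(\omega_k+h)})$ of the above-the-interface Boltzmann weight $1$ and the below-the-interface Boltzmann weight $e^{-2\beta\sum(\omega_k+h)}$, with $f_\alpha$ as in \eqref{faldef}. The choice of exponent $\alpha$ is dictated by the polynomial tail \eqref{rhocond}: it produces the correct balance between the energetic gain from downward dips and the entropic cost $m^{-\alpha}$ of a long excursion.

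Next I would implement the weak-coupling rescaling $h = K\beta$ and $y = \beta^2 m$. Under this scaling, excursions of length $m$ of order $\beta^{-2}$ dominate, and the CLT (applicable thanks to the finite cumulant generating function \eqref{mgffin}) gives $2\beta\sum_{k=1}^m(\omega_k+h)\Rightarrow 2\sqrt{y}\,X+2Ky$ with $X\sim\mathcal{N}(0,1)$. Integrating $f_\alpha$ over $X$ recovers precisely $E_\alpha(y,K)$ of \eqref{EalyBdef}, and asymptotic periodicity together with the regularly varying polynomial tail converts the discrete sum over $m$ into an integral against $y^{-\alpha}\,dy$. The resulting positivity condition for $S^{\mathrm{que}}(\beta,K\beta;0)$ becomes, to leading order in $\beta^2$, the positivity of $I_\alpha$ evaluated at a multiple of $K$; matching scales pins the threshold at $K=B(\alpha)/\alpha$ when $1<\alpha<2$. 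For $\alpha\geq 2$ the integrand in \eqref{IaBdef} is non-integrable at $y=0$, and the heavy-tail mechanism must be replaced by a local Gaussian one: Taylor-expanding $f_\alpha(e^{-2Ky-2\sqrt{y}X})-1$ in powers of $\sqrt{y}$ near $y=0$ and using $\E[X^2]=1$, the second-order balance between the drift $2Ky$ and the fluctuation $2\sqrt{y}\,X$ yields the sharper threshold $K=(1+\alpha)/(2\alpha)$, improving on the classical Bodineau-Giacomin bound $1/\alpha$ and now obtained directly inside the variational framework.

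The main obstacle is the quantitative justification of the weak-coupling limit. One needs uniform control in $m\sim\beta^{-2}$ of the CLT approximation for $\beta\sum_{k=1}^m\omega_k$ after passage through $f_\alpha$ and exponentiation, together with a sharp convergence of the discrete excursion sum to the integral in \eqref{IaBdef}; this is where the finite-moment assumption \eqref{mgffin} is fully used. The asymptotic-periodicity hypothesis removes arithmetic obstructions in approximating the excursion-length density, while the regular-variation hypothesis (invoked only when $m_\rho=\infty$) provides the required uniform tail control. Crucially, because only a \emph{lower} bound on $h_c^{\mathrm{que}}(\beta)/\beta$ is sought, it is enough to evaluate $S^{\mathrm{que}}$ on the chosen test element, sidestepping any matching upper bound and the delicate question of whether the fractional-moment strategy is actually optimal.
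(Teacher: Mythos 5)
Your proposal follows essentially the same route as the paper's proof: reduce the claim to showing $S^{\mathrm{que}}(\beta,K\beta;0)>0$ for every $K<K_c^*$ and $\beta$ small, lower-bound $S^{\mathrm{que}}$ by the fractional-moment/$1/\alpha$-tilted functional (the paper's bound $S_*^{\mathrm{que}}(\beta,h)\geq\alpha\log\cN(\beta,h)$ in \eqref{arg3}--\eqref{arg6}, whose $\cN(\beta,h)$ is exactly your $E(f_\alpha(\cdot))$ expression), and then carry out the weak-coupling scaling $y\sim\beta^{2}m$ with the CLT, yielding the $I_\alpha(B)$ threshold $B(\alpha)/\alpha$ for $1<\alpha<2$ and the small-$y$ Taylor expansion giving $(1+\alpha)/(2\alpha)$ for $\alpha\geq 2$, with asymptotic periodicity and regular variation invoked exactly where the paper invokes them. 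The only step left implicit is that your ``test measure'' bound rests on the inequality $I^{\mathrm{que}}(Q)\leq\alpha H(Q\mid q_{\rho,\nu}^{\otimes\N})$, but this is precisely the paper's own intermediate step, so the two arguments coincide (up to your harmless rescaling $K=B/\alpha$ in the bookkeeping).
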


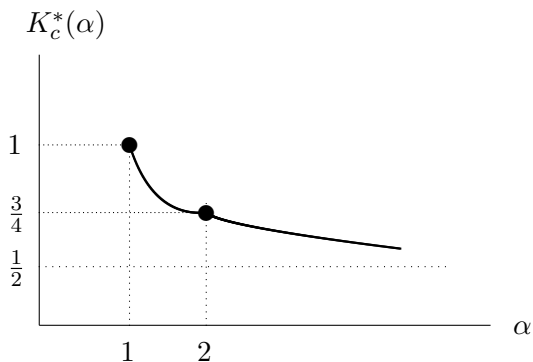
\begin{figure}[htbp]
\vspace{-.5cm}
\begin{center}
\setlength{\unitlength}{0.6cm}
\begin{picture}(8,8)(0,0)
\put(0,0){\line(10,0){10}}
\put(0,0){\line(0,6){6}}
{\thicklines
\qbezier(3.7,2.5)(4,2.2)(8,1.7)
\qbezier(2,4)(2.5,2.4)(3.7,2.5)
}
\qbezier[60](0,1.3)(4.5,1.3)(9,1.3)
\qbezier[20](0,4)(1,4)(2,4)
\qbezier[30](2,4)(2,2)(2,0)
\qbezier[40](0,2.5)(2,2.5)(3.7,2.5)
\qbezier[20](3.7,2.7)(3.7,1.5)(3.7,0)
\put(-.3,6.5){$K_c^*(\alpha)$}
\put(10.5,-.2){$\alpha$}
\put(1.8,-.8){$1$}
\put(3.5,-.8){$2$}
\put(-.7,1.1){$\tfrac12$}
\put(-.7,2.3){$\tfrac34$}
\put(-.7,3.8){$1$}
\put(2,4){\circle*{.35}}
\put(3.7,2.5){\circle*{.35}}
\end{picture}
\end{center}
\caption{Qualitative picture of $\alpha \mapsto K_c^*(\alpha)$.}
\label{fig-slopeplot}
\end{figure}

The last two corollaries concern the typical path behavior. Let $\wP^{\beta,h,\o}_n$ denote 
the path measure associated with the constrained partition sum $\wZ^{\beta,h,\o}_n$ defined 
in \eqref{queexpartsum}. Write $\cM_n = |\{1\leq i\leq n\colon\,\pi_i=0\}|$ to denote the 
number of times $\pi$ returns to the interface up to time $n$.

\begin{corollary}
\label{delocpathprop}
For every $(\beta,h)\in\mathrm{int}(\cD^\mathrm{que})$ and $c>\alpha/[-S^\mathrm{que}
(\beta,h;0)]\in (0,\infty)$,
\begin{equation}
\label{ubdint}
\lim_{n\to\infty} \wP^{\beta,h,\o}_n\left(\cM_n \geq c\log n\right) = 0 \qquad \o-a.s.
\end{equation}
\end{corollary}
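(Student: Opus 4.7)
The plan is to write the Gibbs probability as the ratio
\begin{equation*}
\wP^{\beta,h,\o}_n(\cM_n \geq c\log n)
= \frac{\wZ^{\beta,h,\o}_n(\cM_n \geq c\log n)}{\wZ^{\beta,h,\o}_n}
\end{equation*}
and to bound numerator and denominator separately at polynomial precision in $n$, with the threshold $c>\alpha/[-S^\mathrm{que}(\beta,h;0)]$ emerging from matching the polynomial orders.

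For the denominator I would use the single-excursion ``stay above'' strategy: the path whose unique excursion on $[0,n]$ has length $>n$ and lies above the interface contributes a Boltzmann weight $1$ and a factor $\tfrac12$ for its side, giving the deterministic lower bound
\begin{equation*}
\wZ^{\beta,h,\o}_n \,\geq\, \tfrac{1}{2}\,\rho(\{m\in\N:m>n\}) \,=\, n^{-(\alpha-1)+o(1)},
\end{equation*}
by \eqref{rhocond}.

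For the numerator I would decompose over paths with exactly $j$ returns at positions $0<t_1<\cdots<t_j\leq n$, writing the renewal-type identity
\begin{equation*}
\wZ^{\beta,h,\o}_n(\cM_n=j)
= \sum_{0<t_1<\cdots<t_j\leq n} \prod_{i=1}^{j} \tfrac{1}{2}\rho(t_i-t_{i-1})\,\Phi_i(\o) \cdot R(n-t_j,\o),
\end{equation*}
where $\Phi_i(\o)=1+\exp\bigl[-2\beta\sum_{k=t_{i-1}+1}^{t_i}(\o_k+h)\bigr]$ is the Boltzmann weight of the $i$-th excursion and $R(\cdot,\o)$ the incomplete-tail factor. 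The central ingredient is the identification, via the variational formula for $S^\mathrm{que}(\beta,h;\cdot)$ of Theorem~\ref{varfloc}, of $-S^\mathrm{que}(\beta,h;0)>0$ as the $\o$-a.s.\ exponential decay rate \emph{per return} of this restricted partition sum on $\mathrm{int}(\cD^\mathrm{que})$. Quantitatively one aims at a quenched bound of the form
\begin{equation*}
\wZ^{\beta,h,\o}_n(\cM_n\geq j) \,\leq\, n^{O(1)}\,\exp\bigl(j\,[S^\mathrm{que}(\beta,h;0)+\varepsilon]\bigr),
\qquad \o\text{-a.s.,}
\end{equation*}
uniformly for $j$ up to order $\log n$ and for any fixed $\varepsilon>0$, where the polynomial prefactor absorbs the entropy of return positions together with the tail factor $R$.

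Combining both bounds at $j=\lceil c\log n\rceil$ produces a polynomial upper bound on $\wP^{\beta,h,\o}_n(\cM_n\geq c\log n)$ whose net exponent is a linear function of $c$ with slope $S^\mathrm{que}(\beta,h;0)+\varepsilon<0$ and intercept determined by $\alpha$; the assumption $c>\alpha/[-S^\mathrm{que}(\beta,h;0)]$ makes this exponent strictly negative after sending $\varepsilon\downarrow 0$, so the probability tends to $0$. The hard part is the $\o$-a.s.\ ``one cost per return'' estimate on the restricted partition sum: transferring the Lyapunov/large-deviations reading of $S^\mathrm{que}(\beta,h;0)$ provided by Theorem~\ref{varfloc} into a uniform quenched bound requires concentration for the empirical excursion weights (using the finite cumulant generating function \eqref{mgffin}) combined with a subadditive or transfer-operator argument across the excursions, so that the shift-invariant variational supremum translates into an upper bound valid $\o$-a.s.\ and uniformly in the growing number of excursions.
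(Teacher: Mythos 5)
Your skeleton coincides with the paper's: write $\wP^{\beta,h,\o}_n(\cM_n\geq M)$ as a ratio, charge each complete excursion an exponential cost governed by $S^\mathrm{que}(\beta,h;0)<0$, bound the denominator polynomially from below, and match exponents at $M=\lceil c\log n\rceil$. But the step you single out as ``the hard part'' needs no new concentration or subadditive/transfer-operator argument: by \eqref{Slim} and \eqref{Sfinalext}, $\limsup_{N\to\infty}\frac1N\log F_N^{\beta,h,\o}(0)=S^\mathrm{que}(\beta,h;0)$ $\o$-a.s., where $F_N^{\beta,h,\o}(0)$ in \eqref{FNdef} is precisely the weight of $N$ complete excursions \emph{already summed over all return positions}. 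Hence $\o$-a.s.\ $F_N(0)\leq e^{N[S^\mathrm{que}(\beta,h;0)+\epsilon]}$ for $N\geq N_0(\o,\epsilon)$, and the paper simply bounds the numerator by $\sum_{n'}\widetilde Z_{n',0}^{\beta,h,\o}(\cM_{n'}\geq M)=\sum_{N\geq M}F_N^{\beta,h,\o}(0)$ via (\ref{Zniddef}--\ref{FNdef}) and sums a geometric series. Be careful that the ``entropy of return positions'' cannot be absorbed into a separate polynomial prefactor: at $j\sim c\log n$ a positional count like $\binom{n}{j}$ is of order $e^{c(\log n)^2}$, which is super-polynomial; the positional sum must be kept inside the $F_N$-type quantity whose per-return rate is $S^\mathrm{que}$.

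The genuine gap is your treatment of the free right endpoint. You claim the incomplete-excursion factor $R(n-t_j,\o)$ is absorbed into an $n^{O(1)}$ prefactor, but when the last stretch lies below the interface its weight is $\exp[-2\beta\sum_{k=t_j+1}^n(\o_k+h)]$, and maximized over $t_j\leq n$ this is $\o$-a.s.\ of order $\exp[\mathrm{const}\,\sqrt{n\log\log n}]$ by the law of the iterated logarithm; this swamps the gain $e^{M[S^\mathrm{que}(\beta,h;0)+\epsilon]}=n^{-c[-S^\mathrm{que}(\beta,h;0)-\epsilon]}$ and the ratio no longer tends to zero with a worst-case bound on $R$. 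The paper avoids this by working with the pinned partition sum \eqref{Zaltdef}, for which the renewal identity (\ref{Zniddef}--\ref{FNdef}) is exact and the denominator satisfies $\widetilde Z_{n,0}^{\beta,h,\o}\geq\tfrac12\rho(n)=n^{-\alpha+o(1)}$; this pinned normalization is also why the threshold in the statement is $\alpha/[-S^\mathrm{que}(\beta,h;0)]$ rather than the $(\alpha-1)/[-S^\mathrm{que}(\beta,h;0)]$ your unconstrained lower bound $\tfrac12\sum_{m>n}\rho(m)$ would suggest. To run your unconstrained version you would have to decompose numerator and denominator at the last return time and cancel the common tail factor, e.g.\ bound $\wP^{\beta,h,\o}_n(\cM_n\geq M)$ by $\max_{m}\,\widetilde Z_{m,0}^{\beta,h,\o}(\cM_m\geq M)/\widetilde Z_{m,0}^{\beta,h,\o}$ before invoking the estimates above; as written, this step is missing and the argument as stated would fail.
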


\begin{corollary}
\label{locpathprop}
For every $(\beta,h)\in\cL^\mathrm{que}$, 
\begin{equation}
\label{LLNpath}
\lim_{n\to\infty} 
\wP^{\beta,h,\o}_n\left(|\tfrac1n\cM_n-C| \leq \varepsilon \right) = 1
\qquad \o-a.s.\quad \forall\,\varepsilon>0,
\end{equation}
where 
\begin{equation}
\label{densid}
-\frac{1}{C} = \frac{\partial}{\partial g}\,
S^\mathrm{que}\big(\beta,h;g^\mathrm{que}(\beta,h)\big)
\in (-\infty,0),
\end{equation}
provided this derivative exists. (By convexity, at least the left-derivative and the 
right-derivative exist.)
\end{corollary}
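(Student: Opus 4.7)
My plan is to derive the law of large numbers for $\cM_n/n$ from the variational formulas of Theorem~\ref{freeenegvar} by a standard exponential-tilt argument. For $t$ in a neighborhood of $0$, introduce the tilted partition sum
\[
\widetilde{Z}_n^{\beta,h,\o}(t) = \sum_{\pi\in\Pi} P(\pi)\,\exp\Big[\beta\sum_{k=1}^n (\o_k+h)[{\rm sign}(\pi_{k-1},\pi_k)-1] + t\,\cM_n(\pi)\Big],
\]
and its limiting log-moment generating function $G(t) = \lim_{n\to\infty} \tfrac{1}{n}\log \widetilde{Z}_n^{\beta,h,\o}(t)$. Since $\cM_n$ is additive under path concatenations at the interface and $|t\cM_n|\leq |t|n$ is uniformly bounded, the same subadditivity and concentration argument producing the $\omega$-a.s.\ limit in \eqref{freeenegdef} gives $\o$-a.s.\ and $L^1$ existence of $G(t)$ for every $t\in\R$. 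The map $t\mapsto G(t)$ is convex (Hölder on the prelimit), finite on $\R$, and satisfies $G(0)=g^\mathrm{que}(\beta,h)$.

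Next I would extend the identity in \eqref{Ssolg} to the tilted model. The formula for $S^\mathrm{que}(\beta,h;g)$ given in Theorem~\ref{varfloc} of Section~\ref{S3} is a variational problem over shift-invariant excursion measures whose value represents the per-excursion free-energy surplus at per-monomer cost $g$. Multiplying each excursion weight by $e^{t}$ therefore enters the variational problem as a pure additive constant per excursion, so $G(t)$ is the unique root of
\[
S^\mathrm{que}\big(\beta,h;G(t)\big) + t = 0 \qquad \text{for $t$ in a neighborhood of }0,
\]
which matches \eqref{Ssolg} at $t=0$. Implicit differentiation at $t=0$ then yields
\[
G'(0) = -\frac{1}{\partial_g S^\mathrm{que}(\beta,h;g^\mathrm{que}(\beta,h))} = C,
\]
where the hypothesis of Corollary~\ref{locpathprop} that the derivative in \eqref{densid} exists guarantees that the left- and right-derivatives of $G$ at $0$ coincide.

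Finally I would apply the Gärtner--Ellis exponential-Chebyshev bound: for $\varepsilon>0$ and $t>0$,
\[
\wP^{\beta,h,\o}_n\big(\cM_n/n \geq C+\varepsilon\big) \leq e^{-tn(C+\varepsilon)}\,\frac{\widetilde{Z}_n^{\beta,h,\o}(t)}{\widetilde{Z}_n^{\beta,h,\o}(0)} = \exp\Big\{-n\big[t(C+\varepsilon) - (G(t)-G(0))\big] + o(n)\Big\},
\]
whose exponent is $-t\varepsilon + o(t)$ by $G'(0)=C$, hence strictly negative for sufficiently small $t>0$. The mirror argument using the tilt $e^{-t\cM_n}$ controls the lower tail $\{\cM_n/n\leq C-\varepsilon\}$. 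Combining these bounds gives \eqref{LLNpath} $\o$-a.s.

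The main obstacle lies in the second step: one must verify that the tilt by $e^{t\cM_n}$ reduces the variational formula to the clean identity $S^\mathrm{que}_t = S^\mathrm{que}+t$. This requires entering the explicit form of $S^\mathrm{que}$ in Section~\ref{S3} and checking that the supremum is genuinely normalized per excursion (so that a per-excursion tilt enters as an additive constant) rather than per monomer (which would introduce a spurious density factor and spoil the identification $G'(0)=C$). Establishing this structural fact from Theorem~\ref{varfloc}, and arguing uniqueness of the root $g=G(t)$ in a neighborhood of $g^\mathrm{que}$ (using monotonicity of $g\mapsto S^\mathrm{que}(\beta,h;g)$ at $g^\mathrm{que}$ given by $\partial_g S^\mathrm{que}<0$), is the substantive content; once these are in place, the remainder is routine thermodynamic formalism.
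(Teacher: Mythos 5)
Your argument is correct in substance, but it reaches the result by a Legendre-dual route rather than the paper's direct tail estimates. The ``structural fact'' you single out as the main obstacle is in fact immediate from the excursion decomposition \eqref{Zniddef}--\eqref{FNdef}: since $\cM_n=N$ on the pinned partition sum, tilting by $e^{t\cM_n}$ gives $\sum_n e^{-gn}\widetilde Z_{n,0}^{\beta,h,\omega}(t)=\sum_N e^{tN}F_N^{\beta,h,\omega}(g)$, so by \eqref{Slim} and \eqref{Sfinalext} the tilted analogue of $S^\mathrm{que}$ is exactly $S^\mathrm{que}(\beta,h;g)+t$, and in $\cL^\mathrm{que}$ the abscissa $G(t)$ is the unique root of $S^\mathrm{que}(\beta,h;G(t))=-t$ for $|t|$ small, whence $G'(0)=C$ when \eqref{densid} holds; your Chernoff bounds then close the argument (for the lower tail one also needs $F_N^{\beta,h,\omega}(g)<\infty$ for the finitely many $N$ below the a.s.\ threshold, which is \eqref{Fub}). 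Two remarks on the comparison with the paper's proof. First, you do not actually need the existence of the limit $G(t)=\lim_n\frac1n\log\widetilde Z_n^{\beta,h,\omega}(t)$ (your appeal to subadditivity/concentration for the tilted model is the least solid step): the radius-of-convergence argument only yields $\limsup_n\frac1n\log\widetilde Z_n^{\beta,h,\omega}(t)\leq G(t)$, and together with $\liminf_n\frac1n\log\widetilde Z_n^{\beta,h,\omega}\geq g^\mathrm{que}(\beta,h)-\epsilon$ that is all your exponential Chebyshev bound requires. Second, the paper dispenses with the tilt variable altogether: it bounds $\wP_n^{\beta,h,\omega}(\cM_n\geq c_2n)$ and $\wP_n^{\beta,h,\omega}(\cM_n\leq c_1n)$ directly by comparing $\wZ_n^{\beta,h,\omega}$ with $\sum_{N}F_N^{\beta,h,\omega}(g^\mathrm{que}(\beta,h)\pm\delta)$ and using the one-sided derivatives $C_\pm$ of $g\mapsto S^\mathrm{que}(\beta,h;g)$ at $g^\mathrm{que}(\beta,h)$; this is the same pair of inputs you use, but it yields concentration of $\cM_n/n$ on $[C_-,C_+]$ without assuming differentiability, the law of large numbers being the special case $C_-=C_+=C$. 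Your formulation buys a cleaner thermodynamic statement ($G'(0)=C$) at the price of needing the hypothesis of \eqref{densid} from the start and of introducing an auxiliary free energy that must be controlled; the paper's formulation is marginally more elementary and slightly more general. Both are valid proofs of Corollary~\ref{locpathprop}.
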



\subsection{Discussion}
\label{S1.5}

\medskip\noindent
{\bf 1.}
The main importance of our results in Section~\ref{S1.4} is that they open up a window 
on the copolymer model with a variational view. Whereas the results in the literature 
were obtained with the help of a variety of \emph{estimation techniques}, 
Theorem~\ref{freeenegvar} provides \emph{variational formulas} that are new and 
explicit. As we will see in Section~\ref{S3}, these variational formulas are not 
easy to manipulate. However, they provide a natural setting, and are robust in the 
sense that the large deviation principles on which they are based (see Section~\ref{S2}) 
can be applied to other polymer models as well, e.g.\ the pinning model with disorder 
(Cheliotis and den Hollander~\cite{ChdHo10}). Still other applications involve certain 
classes of interacting stochastic systems (Birkner, Greven and den 
Hollander~\cite{BiGrdHo11}). For an overview, see den Hollander~\cite{dHo10}.

\medskip\noindent
{\bf 2.} 
The gap between the excess free energies stated in Corollary~\ref{freeeneggap} has never 
been claimed in the literature, but follows from known results. Fix $\beta>0$. We know 
that $h\mapsto g^\mathrm{ann}(\beta,h)$ is strictly positive, strictly decreasing and 
linear on $(0,h_c^\mathrm{ann}(\beta)]$, and zero on $[h_c^\mathrm{ann}(\beta),\infty)$ 
(see Fig.~\ref{fig-gann}). We also know that $h\mapsto g^\mathrm{que}(\beta,h)$ is 
strictly positive, strictly decreasing and convex on $(0,h_c^\mathrm{que}(\beta)]$, 
and zero on $[h_c^\mathrm{que}(\beta),\infty)$. It was shown in Giacomin and 
Toninelli~\cite{GiTo06a,GiTo06b} that $h\mapsto g^\mathrm{que}(\beta,h)$ drops below 
a quadratic as $h \uparrow h_c^\mathrm{que}(\beta)$, i.e., the phase transition is 
``at least of second order'' (see Fig.~\ref{fig-critcurve}). Hence, the gap is present 
in a left-neighborhood of $h_c^\mathrm{que}(\beta)$. Combining this observation with 
the fact that $g^\mathrm{que}(\beta,h) \leq g^\mathrm{ann}(\beta,h)$ and 
$h_c^\mathrm{que}(\beta) \leq h_c^\mathrm{ann}(\beta)$, it follows that the gap is 
present for all $h \in (0,h_c^\mathrm{ann}(\beta))$. \emph{Note:} The above argument 
crucially relies on the linearity of $h \mapsto g^\mathrm{ann}(\beta,h)$ on 
$(0,h_c^\mathrm{ann}(\beta)]$. However, we will see in Section~\ref{S3} that our 
proof of Corollary~\ref{freeeneggap} is robust and does not depend on this linearity.

\medskip\noindent
{\bf 3.}
For a number of years, all attempts in the literature to improve (\ref{hcbds}) had failed. 
As explained in Orlandini, Rechnitzer and Whittington~\cite{OrReWh02} and Caravenna 
and Giacomin~\cite{CaGi05}, the reason behind this failure is that any improvement of 
(\ref{hcbds}) necessarily requires a deep understanding of the global behavior of the 
copolymer when the parameters are close to the quenched critical curve. Toninelli~\cite{To08} 
proved the strict upper bound in Corollary~\ref{hcubstrict} with the help of \emph{fractional
moment estimates} for unbounded disorder and large $\beta$ subject to (\ref{mgffin}) and 
(\ref{rhocondalt}), and this result was later extended by Bodineau, Giacomin, Lacoin and 
Toninelli~\cite{BoGiLaTo08} to arbitrary disorder and arbitrary $\beta$, again subject to 
(\ref{mgffin}) and (\ref{rhocondalt}). The latter paper also proved the strict lower bound 
in Corollary~\ref{hclbstrict} with the help of \emph{appropriate localization strategies} 
for small $\beta$ and $\alpha\geq\alpha_0$, where $\alpha_0 \approx 1.801$ (theoretical 
bound) and $\alpha_0 \approx 1.65$ (numerical bound), which unfortunately excludes the 
simple random walk example in Section~\ref{S1.1} for which $\alpha=\tfrac32$. 
Corollaries~\ref{hcubstrict} and \ref{hclbstrict} settle the strict inequalities in 
full generality subject to (\ref{mgffin}) and (\ref{rhocond}).

\medskip\noindent
{\bf 4.} 
A point of heated debate has been the value of 
\begin{equation}
\label{Kcdef}
K_c = \lim_{\beta \downarrow 0} h_c^\mathrm{que}(\beta)/\beta,
\end{equation}
which is believed to be \emph{universal}, i.e., to depend on $\alpha$ alone and 
to be robust under changes of the fine details of the interaction Hamiltonian. 
The existence of $K_c$ was proved in Bolthausen and den Hollander~\cite{BodHo97} 
for $\rho$ associated with simple random walk ($\alpha=\tfrac32$) and binary 
disorder. The proof uses a Brownian approximation of the copolymer model. This 
result was extended in Caravenna and Giacomin~\cite{CaGi10} to $\rho$ satisfying 
(\ref{rhocondalt}) with $1<\alpha<2$ and disorder with a moment generating function 
that is finite in a neighborhood of the origin. The proof uses a L\'evy approximation 
of the copolymer model. The L\'evy copolymer serves as the \emph{attractor of a 
universality class}, indexed by the exponent $1<\alpha<2$. For $\alpha \geq 2$, 
the existence of the limit has remained open. The bounds in (\ref{hcbds}) imply 
that $K_c \in [1/\alpha,1]$, and various claims were made in the literature 
arguing in favor of $K_c=1/\alpha$, respectively, $K_c=1$. However, in Bodineau, 
Giacomin, Lacoin and Toninelli~\cite{BoGiLaTo08} it was shown that $\liminf_{\beta
\downarrow 0} h_c^\mathrm{que}(\beta)/\beta > 1/\alpha$ for $\alpha \geq \alpha_0$
and $\liminf_{\beta \downarrow 0} h_c^\mathrm{que}(\beta)/\beta \geq \tfrac12 \vee 
(1/\sqrt{\alpha})$ for $\alpha>2$. Corollary~\ref{Kcslope} improves these two 
lower bounds. We do not have an upper bound. In \cite{BoGiLaTo08} it was shown that
$\limsup_{\beta \downarrow 0} h_c^\mathrm{que}(\beta)/\beta < 1$ for $\alpha>2$, 
which was later extended to $\alpha>1$ in Toninelli~\cite{To09}. For an overview, 
see Caravenna, Giacomin and Toninelli~\cite{CaGiTo11}. 
 
\medskip\noindent
{\bf 5.} A numerical analysis for simple random walk ($\alpha=\tfrac32$) and 
binary disorder carried out in Caravenna, Giacomin and Gubinelli~\cite{CaGiGu06} 
(see also Giacomin~\cite{Gi07}, Chapter 9) showed that $K_c \in [0.82,0.84]$. 
Since $\tfrac56=0.833\dots$, the following conjecture is natural.
 
\begin{conjecture}
\label{kcconjecture}
$K_c = \frac{1+\alpha}{2\alpha}$ for all $\alpha>1$.
\end{conjecture}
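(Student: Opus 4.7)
The plan is to work entirely inside the variational framework of Theorem~\ref{freeenegvar}. By part (iii), $h_c^{\mathrm{que}}(\beta)$ is the unique positive solution of $S^{\mathrm{que}}(\beta,h;0)=0$. Setting $h=K\beta$ reduces Conjecture~\ref{kcconjecture} to identifying the critical slope $K_c$ at which $S^{\mathrm{que}}(\beta,K\beta;0)$ switches sign as $\beta\downarrow 0$, and then to showing that matching lower and upper bounds both yield $(1+\alpha)/(2\alpha)$. The natural small-$\beta$ scaling is $\beta\downarrow 0$ together with excursion length $\ell\asymp\beta^{-2}$, which is the classical copolymer scaling that produces a Brownian limit for $\alpha\geq 2$ and a L\'evy limit for $1<\alpha<2$. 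The program is to push this scaling through the variational formula of Theorem~\ref{varfloc} and solve the resulting continuum variational problem, which should depend only on $\alpha$ and $K$.

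The lower bound is largely in place. Corollary~\ref{Kcslope} already yields $\liminf_{\beta\downarrow 0} h_c^{\mathrm{que}}(\beta)/\beta\geq (1+\alpha)/(2\alpha)$ for $\alpha\geq 2$. For $1<\alpha<2$ one must verify that the quantity $B(\alpha)/\alpha$ defined through \eqref{IaBdef}--\eqref{faldef} coincides with $(1+\alpha)/(2\alpha)$. I would attempt this by recognising $E_\alpha(y,B)-1$, after an appropriate Gaussian change of variables, as the rescaled score of the same Brownian competitor that produces $(1+\alpha)/(2\alpha)$ in the $\alpha\geq 2$ range, so that $B(\alpha)=(1+\alpha)/2$ emerges directly from the equation $I_\alpha(B)=0$. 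If this direct identification fails, the lower-bound strategy behind Corollary~\ref{Kcslope} has to be refined by enlarging the class of test configurations, for instance by allowing bias structures that are correlated across excursions on the diffusive scale, consistent with the universality heuristic that $K_c$ should be a continuous function of $\alpha$ across $\alpha=2$.

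The truly hard part is the matching upper bound $\limsup_{\beta\downarrow 0} h_c^{\mathrm{que}}(\beta)/\beta\leq (1+\alpha)/(2\alpha)$. In our variational language this requires constructing, for every $K$ strictly greater than $(1+\alpha)/(2\alpha)$, an excursion measure $Q\neq\rho$ witnessing $S^{\mathrm{que}}(\beta,K\beta;0)<0$ for all sufficiently small $\beta$. The natural candidate is a measure that tilts the sign of each excursion of length $\ell$ according to the empirical $\omega$-average over that excursion, truncated on the diffusive scale $\ell^{1/2}$, so that after the rescaling $\ell\asymp\beta^{-2}$ the limiting object is the Gaussian (or L\'evy) copolymer whose excess free energy admits closed-form optimisation. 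The delicate step is to control the specific relative entropy of $Q$ against $\rho$ uniformly in $\beta$ and to show that the cost--gain balance crosses zero exactly at $K=(1+\alpha)/(2\alpha)$. This is precisely the point at which the existing literature gets stuck, since fractional-moment and smoothing estimates have so far only produced the weak bound $K_c<1$ of Toninelli~\cite{To09}. I expect that a successful upper bound along these lines would simultaneously yield the existence of $K_c$ for $\alpha\geq 2$, which is itself open, and I view this as the essential obstacle to settling Conjecture~\ref{kcconjecture}.
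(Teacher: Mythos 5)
This statement is a \emph{conjecture} in the paper, not a theorem: the authors prove only the lower bound $\liminf_{\beta\downarrow 0}h_c^{\mathrm{que}}(\beta)/\beta\geq K_c^*$ of Corollary~\ref{Kcslope} (via the variational formula \eqref{arg6} and the small-$\beta$ expansion of $\cN(\beta,B\beta/\alpha)$ in Section~\ref{S7}), and they state explicitly that they have no upper bound. Your proposal, by your own admission, also does not close the gap: what you offer for the upper bound is a program (construct a tilted excursion law $Q$ on the diffusive scale $\ell\asymp\beta^{-2}$, control its specific relative entropy uniformly in $\beta$, and show the cost--gain balance vanishes at $K=(1+\alpha)/(2\alpha)$) with no argument for the key entropy estimate, which is precisely where all known techniques fail. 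So the decisive step of the conjecture remains missing, and the proposal cannot be accepted as a proof of the statement.

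There is also a concrete problem in your treatment of the lower bound for $1<\alpha<2$. You hope to show $B(\alpha)=(1+\alpha)/2$, so that $B(\alpha)/\alpha$ coincides with the conjectured slope. Nothing in the paper supports this: $B(\alpha)$ is defined through $I_\alpha(B)=0$ with the Lévy-type functional \eqref{IaBdef}--\eqref{faldef}, the authors keep the two regimes $1<\alpha<2$ and $\alpha\geq 2$ separate in \eqref{Kcvalue}, and their qualitative picture of $\alpha\mapsto K_c^*(\alpha)$ (Fig.~\ref{fig-slopeplot}) indicates $B(\alpha)/\alpha$ lying strictly below $(1+\alpha)/(2\alpha)$ on $(1,2)$; consistency with the conjecture only forces $B(\alpha)\leq(1+\alpha)/2$, not equality. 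Your fallback of ``enlarging the class of test configurations'' with excursion-correlated biases is not compatible with the structure of the variational formula \eqref{Sdefalt}, where the test objects are shift-invariant word laws $Q$ and the gain term $\Phi_{\beta,h}$ is a single-word functional, so it is not a routine refinement. Finally, note that even the existence of the limit $K_c$ in \eqref{Kcdef} is open for $\alpha\geq 2$, so any argument pinning down its value must first establish that the $\liminf$ and $\limsup$ agree, which your plan assumes rather than proves.
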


\noindent
In \cite{CaGiGu06} is was also shown that 
\begin{equation}
h^\mathrm{que}_c(\beta) \approx (2K_c\beta)^{-1}\log\cosh(2K_c\beta)
\mbox{ for moderate } \beta.
\end{equation} 
Thus, the quenched critical curve lies ``somewhere halfway'' between the two bounds 
in (\ref{hcbds}), and so it remains a challenge to quantify the strict inequalities 
in Corollaries~\ref{hcubstrict} and \ref{hclbstrict}. Some quantification for the 
upper bound was offered in Bodineau, Giacomin, Lacoin and Toninelli~\cite{BoGiLaTo08}, 
and for the lower bound in Toninelli~\cite{To09}. Our proofs of Corollaries~\ref{hcubstrict} 
and \ref{hclbstrict} sharpen these quantifications.

\medskip\noindent
{\bf 6.} Because of (\ref{hcbds}), it was suggested that the quenched critical curve 
possibly depends on the exponent $\alpha$ of $\rho$ alone and not on the fine details 
of $\rho$. However, it was shown in Bodineau, Giacomin, Lacoin and Toninelli~\cite{BoGiLaTo08} 
that, subject to \eqref{mgffin}, for every $\alpha>1$, $\beta>0$ and $\epsilon>0$ there 
exists a $\rho$ satisfying (\ref{rhocondalt}) such that $h^\mathrm{que}_c(\beta)$ is 
$\epsilon$-close to the upper bound, which rules out such a scenario. Our variational
characterization in Section~\ref{S3} confirms this observation, and makes it quite evident 
that the fine details of $\rho$ do indeed matter.

\medskip\noindent
{\bf 7.}
Special cases of Corollaries~\ref{delocpathprop} and \ref{locpathprop} were proved
in Biskup and den Hollander~\cite{BidHo99} (for simple random walk and binary disorder) 
and in Giacomin and Toninelli~\cite{GiTo05,GiTo06c} (subject to (\ref{rhocondalt}) and 
for disorder satisfying a Gaussian concentration of measure bound). However, no formulas 
were obtained for the relevant constants.The latter two papers prove the bound under 
the average quenched measure, i.e., under $\E(P_n^{\beta,h,\omega})$. For the pinning 
model with disorder, the same result as in Corollary~\ref{delocpathprop} was derived 
in Mourrat~\cite{Mopr} with the help of the variational characterization obtained 
in Cheliotis and den Hollander~\cite{ChdHo10}.


\subsection{Outline}
\label{S1.7}

In Section~\ref{S2} we recall two large deviation principles (LDP's) derived in 
Birkner~\cite{Bi08} and Birkner, Greven and den Hollander~\cite{BiGrdHo10}, which 
describe the large deviation behavior of the empirical process of words cut out 
from a random letter sequence according to a random renewal process with exponentially 
bounded, respectively, polynomial tails. In Section~\ref{S3} we use these LDP's 
to prove Theorem~\ref{freeenegvar}. In Sections~\ref{S4}--\ref{S8} we prove 
Corollaries~\ref{freeeneggap}--\ref{locpathprop}. Appendices~\ref{appA}--\ref{appD} 
contain a number of technical estimates that are needed in Section~\ref{S3}.  

In Cheliotis and den Hollander~\cite{ChdHo10}, the LDP's in \cite{BiGrdHo10} were 
applied to the pinning model with disorder, and variational formulas were derived 
for the critical curves (not the free energies). The Hamiltonian is similar in spirit 
to (\ref{copoldef}), except that the disorder is felt only \emph{at} the interface, 
which makes the pinning model easier than the copolymer model. The present paper borrows 
ideas from \cite{ChdHo10}. However, the new challenges that come up are considerable.


\section{Large deviation principles: intermezzo}
\label{S2}

In this section we recall the LDP's from Birkner~\cite{Bi08} and  Birkner, Greven 
and den Hollander~\cite{BiGrdHo10}, which are the key tools in the present paper. 
Section~\ref{S2.1} introduces the relevant notation, while Sections~\ref{S2.2} and 
\ref{S2.3} state the annealed, respectively, quenched version of the LDP. Apart 
from minor modifications, this section is copied from \cite{BiGrdHo10}. We repeat 
it here in order to set the notation and to keep the paper self-contained. 


\subsection{Notation}
\label{S2.1}

Let $E$ be a Polish space, playing the role of an alphabet, i.e., a set of \emph{letters}. 
Let $\widetilde{E} = \cup_{k\in\N} E^k$ be the set of \emph{finite words} drawn from
$E$, which can be metrized to become a Polish space. Write $\cP(E)$ and $\cP(\widetilde E)$
to denote the set of probability measures on $E$ and $\widetilde E$.

\begin{figure}[htbp]
\vspace{-4cm}
\begin{center}
\includegraphics[scale = .8]{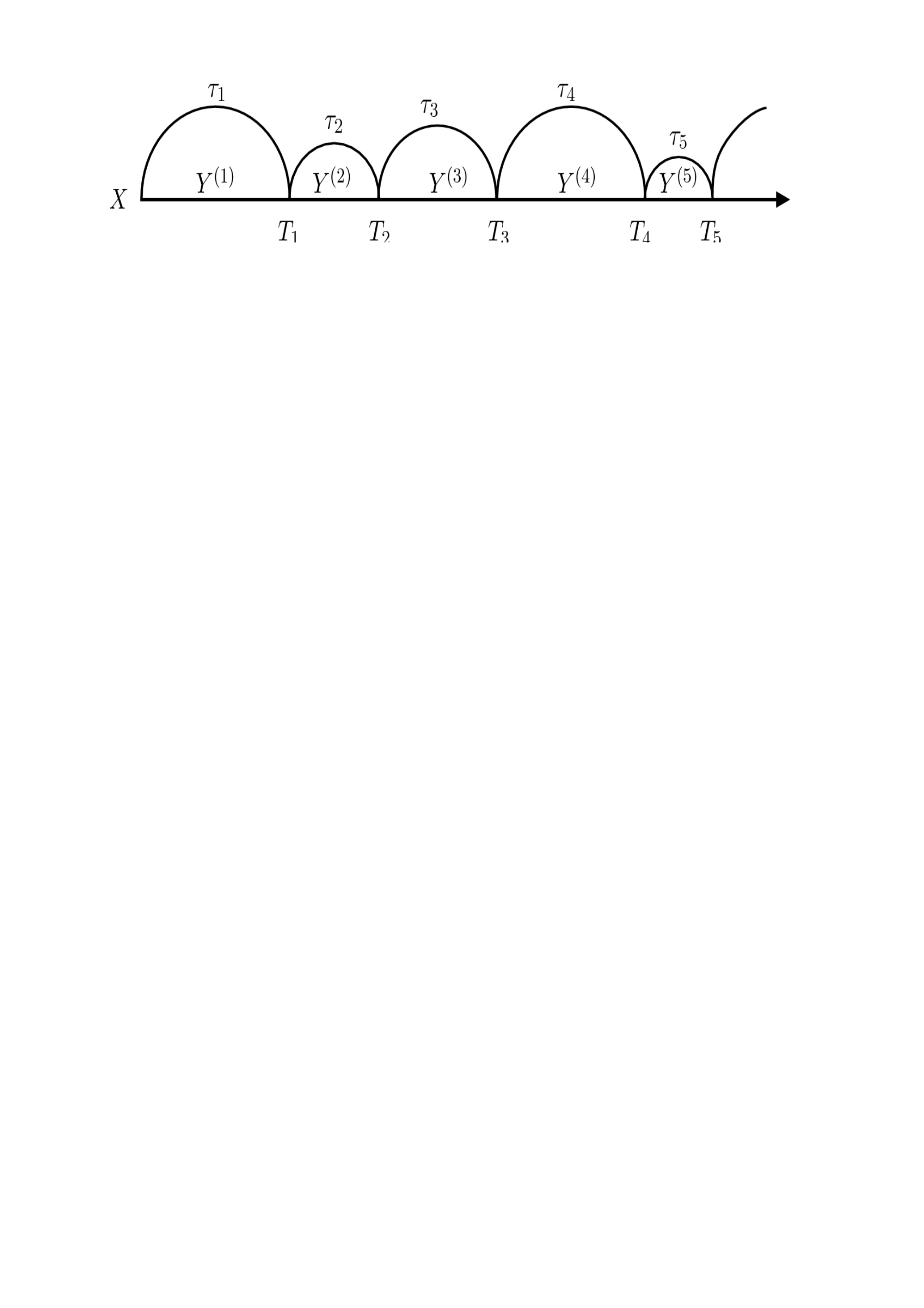}
\vspace{-20cm}
\caption{\small Cutting words out from a sequence of letters according to renewal times.}
\label{fig-cutting}
\end{center}
\end{figure}
\vspace{-0cm}

Fix $\nu \in \cP(E)$, and $\rho\in\cP(\N)$ satisfying (\ref{rhocond}). Let $X=(X_k)_{k\in\N}$
be i.i.d.\ $E$-valued random variables with marginal law $\nu$, and $\tau=(\tau_i)_{i\in\N}$
i.i.d.\ $\N$-valued random variables with marginal law $\rho$. Assume that $X$ and
$\tau$ are independent, and write $\Pr^\ast=\Pr\otimes P^\ast$ to denote their joint law. Cut 
words out of the letter sequence $X$ according to $\tau$ (see Fig.~\ref{fig-cutting}),
i.e., put
\begin{equation}
\label{Tdefs}
T_0=0 \quad \mbox{ and } \quad T_i=T_{i-1}+\tau_i,\quad i\in\N,
\end{equation}
and let
\begin{equation}
\label{eqndefYi}
Y^{(i)} = \bigl( X_{T_{i-1}+1}, X_{T_{i-1}+2},\dots, X_{T_{i}}\bigr),
\quad i \in \N.
\end{equation}
Under the law $\Pr^\ast$, $Y = (Y^{(i)})_{i\in\N}$ is an i.i.d.\ sequence of words
with marginal law $q_{\rho,\nu}$ on $\widetilde{E}$ given by
\begin{equation}
\label{q0def}
\begin{aligned}
q_{\rho,\nu}\big(dx_1,\dots,dx_m\big) 
&= \Pr^\ast\big(Y^{(1)}\in(dx_1,\dots,dx_m)\big)\\[0.2cm]
& = \rho(m) \,\nu(dx_1) \times \dots \times \nu(dx_m), \qquad
m\in\N,\,x_1,\dots,x_m\in E.
\end{aligned}
\end{equation}

We define $\rho_g$ as the tilted version of $\rho$ given by
\begin{equation}
\label{rhoz}
\rho_g(m) = \frac{e^{-gm}\rho(m)}{\cN(g)}, \quad m\in\N, \qquad
\cN(g)=\sum_{m\in\N} e^{-gm}\rho(m), \quad g \in [0,\infty).
\end{equation}
Note that if $g>0$, then $\rho_g$ has an \emph{exponentially bounded tail}. For $g=0$ 
we write $\rho$ instead of $\rho_0$. We write $P^\ast_g$ and $q_{\rho_g,\nu}$ for 
the analogues of $P^\ast$ and $q_{\rho,\nu}$ when $\rho$ is replaced by $\rho_g$ defined 
in \eqref{rhoz}.

The reverse operation of \emph{cutting} words out of a sequence of letters is 
\emph{glueing} words together into a sequence of letters. Formally, this is done 
by defining a \emph{concatenation} map $\kappa$ from $\widetilde{E}^\N$ to $E^{\N}$. 
This map induces in a natural way a map fsrom $\cP(\widetilde{E}^\N)$ to $\cP(E^{\N})$,
the sets of probability measures on $\widetilde{E}^\N$ and $E^{\N}$ (endowed with
the topology of weak convergence). The concatenation $q_{\rho,\nu}^{\otimes\N}\circ
\kappa^{-1}$ of $q_{\rho,\nu}^{\otimes\N}$ equals $\nu^{\N}$, as is evident from 
(\ref{q0def}). 

Let $\cP^{\mathrm{inv}}(\widetilde{E}^\N)$ be the set of probability measures
on $\widetilde{E}^\N$ that are invariant under the left-shift $\widetilde{\theta}$
acting on $\widetilde{E}^\N$. For $Q\in\cP^\mathrm{inv}(\widetilde{E}^\N)$, let 
$H(Q \mid q_{\rho,\nu}^{\otimes\N})$ be the \emph{specific relative entropy of $Q$ 
w.r.t.\ $q_{\rho,\nu}^{\otimes\N}$} defined by
\begin{equation}
\label{spentrdef}
H(Q \mid q_{\rho,\nu}^{\otimes\N}) = \lim_{N\to\infty} \frac{1}{N}\,
h(\widetilde\pi_N Q \mid q_{\rho,\nu}^N),
\end{equation}
where $\widetilde\pi_N Q \in \cP(\widetilde{E}^N)$ denotes the projection of $Q$ 
onto the first $N$ words, $h(\,\cdot\mid\cdot\,)$ denotes relative entropy, and 
the limit is non-decreasing. The following lemma relates the specific relative 
entropies of $Q$ w.r.t.\ $q_{\rho,\nu}^{\otimes\N}$ and $q_{\rho_g,\nu}^{\otimes\N}$.

\begin{lemma}
\label{rhozre}
For $Q\in\cP^\mathrm{inv}(\widetilde{E}^\N)$ and $g \in [0,\infty)$,
\begin{equation}
\label{rhozrho}
H(Q\mid q_{\rho_g,\nu}^{\otimes\N}) = H(Q\mid q_{\rho,\nu}^{\otimes\N})\
+ \log\cN(g)+gm_Q
\end{equation}
with $\cN(g)\in (0,1]$ defined in {\rm \eqref{rhoz}} and $m_Q=E_Q(\tau_1) \in [1,\infty]$ the 
average word length under $Q$ ($E_Q$ denotes expectation under the law $Q$ and $\tau_1$ 
is the length of the first word).
\end{lemma}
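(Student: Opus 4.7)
The plan is to compute the Radon–Nikodym derivative between the $N$-word projections of $q_{\rho_g,\nu}^{\otimes\N}$ and $q_{\rho,\nu}^{\otimes\N}$, use the chain rule for relative entropy, and then pass to the limit $N\to\infty$ exploiting the shift-invariance of $Q$. The formula has the flavor of a tilt identity for relative entropy, so the work is essentially bookkeeping once the right Radon–Nikodym derivative is written down.

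First I would note from \eqref{q0def} and \eqref{rhoz} that the single-word densities satisfy
\begin{equation*}
\frac{dq_{\rho_g,\nu}}{dq_{\rho,\nu}}\bigl((x_1,\dots,x_m)\bigr)
= \frac{\rho_g(m)}{\rho(m)} = \frac{e^{-gm}}{\cN(g)},
\end{equation*}
so that for the product measures on $\widetilde E^N$,
\begin{equation*}
\frac{dq_{\rho_g,\nu}^N}{dq_{\rho,\nu}^N}(y_1,\dots,y_N)
= \cN(g)^{-N}\,\exp\!\Bigl[-g\sum_{i=1}^N |y_i|\Bigr],
\end{equation*}
where $|y_i|$ is the length of the $i$-th word. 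Then the chain rule for relative entropy gives
\begin{equation*}
h\bigl(\widetilde\pi_N Q \mid q_{\rho_g,\nu}^N\bigr)
= h\bigl(\widetilde\pi_N Q \mid q_{\rho,\nu}^N\bigr)
+ N\log\cN(g) + g\,E_Q\!\Bigl[\sum_{i=1}^N |Y^{(i)}|\Bigr].
\end{equation*}
Here I would need to check that the integral defining the last term is well-posed even when $m_Q=\infty$ (in which case both sides are $+\infty$ as soon as $g>0$). By the shift-invariance of $Q$, each $E_Q(|Y^{(i)}|)$ equals $E_Q(\tau_1)=m_Q$, so the last term reduces to $gNm_Q$.

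Dividing by $N$ and invoking the definition \eqref{spentrdef} of specific relative entropy yields
\begin{equation*}
H(Q \mid q_{\rho_g,\nu}^{\otimes\N})
= H(Q \mid q_{\rho,\nu}^{\otimes\N}) + \log\cN(g) + g\,m_Q,
\end{equation*}
which is the claimed identity. The bound $\cN(g)\in(0,1]$ is immediate from \eqref{rhoz} since $\rho$ is a probability distribution on $\N$ and $g\ge 0$. The only delicate point worth a remark is the case $m_Q=\infty$ with $g>0$: one then interprets $g m_Q=+\infty$, and the finite-$N$ identity together with monotone convergence (applied to $\sum_{i=1}^N |y_i|$ under $\widetilde\pi_N Q$) shows that both sides blow up at the same rate, so the identity persists in $[0,+\infty]$. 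This case will be the only mild technical obstacle; the rest is a one-line computation.
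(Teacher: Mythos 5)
Your proposal is correct and follows essentially the same route as the paper: compute the Radon--Nikodym derivative $dq_{\rho_g,\nu}^N/dq_{\rho,\nu}^N = \cN(g)^{-N}e^{-g\sum_i|y^{(i)}|}$, insert it into the finite-$N$ relative entropy, use shift-invariance of $Q$ to reduce the expected total word length to $Nm_Q$, and then pass to the limit via \eqref{spentrdef}. Your extra remark on the $m_Q=\infty$ case is a harmless refinement of the same argument.
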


\begin{proof}
Observe from \eqref{rhoz} that 
\begin{equation}
\begin{split}
h(\widetilde\pi_N Q\mid q_{\rho_g,\nu}^N)
&= \int_{\widetilde{E}^N} (\widetilde\pi_NQ)(dy)\,
\log\left(\frac{d\widetilde\pi_N Q}{d q_{\rho_g,\nu}^N}(y)\right)\\
&= \int_{\widetilde{E}^N} (\widetilde\pi_NQ)(dy)\,
\log\left(\frac{\cN(g)^N}{e^{-g\sum_{i=1}^N |y^{(i)}|}}\,
\frac{d\widetilde\pi_N Q}{d q_{\rho,\nu}^N}(y)\right)\\
&= h(\widetilde\pi_N Q\mid q_{\rho,\nu}^N)+N\log\cN(g)+Ngm_{Q},
\end{split}
\end{equation}
where $|y^{(i)}|$ is the length of the $i$-th word and the second equality uses 
that $Q\in\cP^\mathrm{inv}(\widetilde{E}^\N)$. Let $N\to\infty$ and use 
\eqref{spentrdef}, to get the claim.
\end{proof}

Lemma~\ref{rhozre} implies that if $g>0$, then $m_Q<\infty$ whenever 
$H(Q\mid q_{\rho_g,\nu}^{\otimes\N})<\infty$. This is a special case of
\cite{Bi08}, Lemma 7.


\subsection{Annealed LDP}
\label{S2.2}

For $N\in\N$, let $(Y^{(1)},\dots,Y^{(N)})^\mathrm{per}$ be the periodic extension of 
the $N$-tuple $(Y^{(1)},\dots,Y^{(N)})\in\widetilde{E}^N$ to an element of 
$\widetilde{E}^\N$, and define 
\begin{equation}
\label{eqndefRN}
R_N^X = \frac{1}{N} \sum_{i=0}^{N-1}
\delta_{\widetilde{\theta}^i (Y^{(1)},\dots,Y^{(N)})^\mathrm{per}}
\in \mathcal{P}^{\mathrm{inv}}(\widetilde{E}^\N).
\end{equation}
This is the \emph{empirical process of $N$-tuples of words}. The superscript $X$ indicates 
that the words $Y^{(1)},\dots,Y^{(N)}$ are cut from the letter sequence $X$. The following 
\emph{annealed LDP} is standard (see e.g.\ Dembo and Zeitouni~\cite{DeZe98}, Section 6.5). 

\begin{theorem}
\label{aLDP}
For every $g \in [0,\infty)$, the family $(\Pr \times P^\ast_g)(R_N^X \in \cdot\,)$, $N\in\N$, satisfies the LDP on $\cP^{\mathrm{inv}}(\widetilde{E}^\N)$ with rate $N$ and with rate 
function $I_g^{\mathrm{ann}}$ given by
\begin{equation}
\label{Ianndef}
I_g^{\mathrm{ann}}(Q)= H\big(Q \mid q_{\rho_g,\nu}^{\otimes\N}\big), 
\qquad Q \in \cP^{\mathrm{inv}}(\widetilde{E}^\N).
\end{equation}
This rate function is lower semi-continuous, has compact level sets, has a unique
zero at $q_{\rho_g,\nu}^{\otimes\N}$, and is affine.
\end{theorem}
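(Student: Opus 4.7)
The key observation is that, under the product law $\Pr\times P^\ast_g$, the sequence of words $Y=(Y^{(i)})_{i\in\N}$ is i.i.d.\ with marginal $q_{\rho_g,\nu}$ on $\widetilde E$, as can be read off from \eqref{q0def} with $\rho$ replaced by $\rho_g$. Therefore, $R_N^X$ is just the (periodized) empirical process of an i.i.d.\ sequence taking values in the Polish space $\widetilde E$, and the statement is a direct application of the standard process-level LDP of Donsker--Varadhan.

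First I would invoke the classical empirical-process LDP (Dembo--Zeitouni~\cite{DeZe98}, Theorem 6.5.11): if $Y^{(1)},Y^{(2)},\dots$ are i.i.d.\ $\widetilde E$-valued with law $q$, then the periodized empirical process on $\cP^{\mathrm{inv}}(\widetilde E^\N)$ satisfies the LDP with rate $N$ and good rate function $Q\mapsto H(Q\mid q^{\otimes\N})$, with $H(\cdot\mid\cdot)$ defined as in \eqref{spentrdef}. Applying this with $q=q_{\rho_g,\nu}$ delivers \eqref{Ianndef} immediately. The periodization in \eqref{eqndefRN} is a technical device that forces $R_N^X$ to land in $\cP^{\mathrm{inv}}(\widetilde E^\N)$; replacing it by the non-periodic average changes the configuration in only $O(1)$ coordinates out of $N$, hence is exponentially negligible and preserves the LDP (standard contraction/approximation argument).

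It remains to verify the stated properties of $I_g^{\mathrm{ann}}$. Lower semi-continuity and compactness of level sets on $\cP^{\mathrm{inv}}(\widetilde E^\N)$ with the weak topology are part of the goodness of the Donsker--Varadhan rate function. The unique zero is characterized by $H(Q\mid q_{\rho_g,\nu}^{\otimes\N})=0\Leftrightarrow Q=q_{\rho_g,\nu}^{\otimes\N}$; this follows from the variational representation
\[
H(Q\mid q_{\rho_g,\nu}^{\otimes\N})
=\sup_{N\in\N}\tfrac1N\,h(\widetilde\pi_N Q\mid q_{\rho_g,\nu}^N),
\]
combined with the fact that $h(\mu\mid\nu)=0\Leftrightarrow\mu=\nu$ together with Kolmogorov consistency (so that matching all finite-dimensional marginals forces equality on $\widetilde E^\N$). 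Finally, affineness of $Q\mapsto H(Q\mid q_{\rho_g,\nu}^{\otimes\N})$ on $\cP^{\mathrm{inv}}(\widetilde E^\N)$ is the standard fact that, for a product reference measure, specific relative entropy is linear under convex combinations of stationary laws: this comes out of the ergodic decomposition $Q=\int Q_\omega\,d\lambda(\omega)$, under which
\[
H(Q\mid q_{\rho_g,\nu}^{\otimes\N})=\int H(Q_\omega\mid q_{\rho_g,\nu}^{\otimes\N})\,d\lambda(\omega),
\]
an identity which (for product reference measures) is classical; see e.g.\ Dembo--Zeitouni~\cite{DeZe98}, Section~6.5, or the discussion in Birkner~\cite{Bi08}.

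The main (and really only) conceptual point to watch is that the claim concerns the process-level LDP on $\cP^{\mathrm{inv}}(\widetilde E^\N)$ rather than the Sanov-type LDP for one-dimensional marginals; everything else is bookkeeping. Because the Donsker--Varadhan theorem is invoked off-the-shelf, no serious obstacle arises, which is precisely why the paper declares the result ``standard'' and defers to~\cite{DeZe98}.
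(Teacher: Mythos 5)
Your proposal is correct and follows exactly the route the paper takes: the paper offers no proof of Theorem~\ref{aLDP} but declares it standard and cites Dembo--Zeitouni, Section~6.5, and your argument simply spells out that citation — under $\Pr\times P^\ast_g$ the words are i.i.d.\ with law $q_{\rho_g,\nu}$, so the process-level Donsker--Varadhan LDP applies verbatim, and the listed properties (goodness, unique zero, affineness via ergodic decomposition) are the standard features of specific relative entropy relative to a product measure. No gaps.
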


It follows from Lemma~\ref{rhozre} that 
\begin{equation}\label{Iannz}
I_g^{\rm ann}(Q)=I^{\rm ann}(Q)+\log\cN(g)+gm_Q,
\end{equation}
where $I^{\rm ann}(Q)=H(Q\mid q_{\rho,\nu}^{\otimes \N})$, the annealed rate function 
for $g=0$.


\subsection{Quenched LDP}
\label{S2.3}

To formulate the quenched analogue of Theorem~\ref{aLDP}, we need some more notation. 
Let $\cP^{\mathrm{inv}}(E^{\N})$ be the set of probability measures on $E^{\N}$ 
that are invariant under the left-shift $\theta$ acting on $E^{\N}$. For $Q\in
\cP^{\mathrm{inv}}(\widetilde{E}^\N)$ such that $m_Q < \infty$, define
\begin{equation}
\label{PsiQdef}
\Psi_Q = \frac{1}{m_Q} E_Q\left(\sum_{k=0}^{\tau_1-1}
\delta_{\theta^k\kappa(Y)}\right) \in \cP^{\mathrm{inv}}(E^{\N}).
\end{equation}
Think of $\Psi_Q$ as the shift-invariant version of $Q\circ\kappa^{-1}$ obtained
after \emph{randomizing} the location of the origin. This randomization is necessary
because a shift-invariant $Q$ in general does not give rise to a shift-invariant
$Q\circ\kappa^{-1}$.

For $\tr \in \N$, let $[\cdot]_\tr \colon\,\widetilde{E} \to [\widetilde{E}]_\tr
= \cup_{k=1}^\tr E^k$ denote the \emph{truncation map} on words defined by
\begin{equation}
\label{trunword}
y = (x_1,\dots,x_m) \mapsto [y]_\tr = (x_1,\dots,x_{m \wedge \tr}),
\qquad  m\in\N,\,x_1,\dots,x_m\in E,
\end{equation}
i.e., $[y]_\tr$ is the word of length $\leq\tr$ obtained from the word $y$ by dropping
all the letters with label $>\tr$. This map induces in a natural way a map from 
$\widetilde E^\N$ to $[\widetilde{E}]_\tr^\N$, and from $\cP^{\mathrm{inv}}
(\widetilde{E}^\N)$ to $\cP^{\mathrm{inv}}([\widetilde{E}]_\tr^\N)$. Note that 
if $Q\in\cP^{\mathrm{inv}}(\widetilde{E}^\N)$, then $[Q]_\tr$ is an element of the 
set
\begin{equation}
\label{Pfin} 
\cP^{\mathrm{inv,fin}}(\widetilde{E}^\N) = \{Q\in\cP^{\mathrm{inv}}(\widetilde{E}^\N)
\colon\,m_Q<\infty\}.  
\end{equation}

Define (w-lim means weak limit)
\begin{equation}
\label{Rdef}
\cR = \left\{Q\in\cP^{\mathrm{inv}}(\widetilde{E}^\N)\colon\,
{\rm w}-\lim_{N\to\infty} \frac{1}{N}
\sum_{k=0}^{N-1} \delta_{\theta^k\kappa(Y)}=\nu^{\otimes\N} \quad Q-a.s.\right\},
\end{equation}
i.e., the set of probability measures in $\cP^{\mathrm{inv}}(\widetilde{E}^\N)$ under
which the concatenation of words almost surely has the same asymptotic statistics as 
a typical realization of $X$.

\begin{theorem}
\label{qLDP}
{\rm (Birkner~\cite{Bi08}; Birkner, Greven and den Hollander~\cite{BiGrdHo10})}
Assume {\rm (\ref{mgffin})} and {\rm (\ref{rhocond})}. Then, for $\nu^{\otimes \N}$--a.s.\ 
all $X$ and all $g \in [0,\infty)$, the family of (regular) conditional probability 
distributions $P^\ast_g(R_N^X \in \cdot\,)$, $N\in\N$, satisfies the LDP on $\cP^{\mathrm{inv}}
(\widetilde{E}^\N)$ with rate $N$ and with deterministic rate function $I^{\mathrm{que}}_g$ 
given by
\begin{equation}
\label{eqgndefinitionIalgz}
I^\mathrm{que}_g(Q) = \left\{\begin{array}{ll}
I^{\rm ann}_g(Q),
&\mbox{if } Q\in\cR,\\
\infty,
&\mbox{otherwise},
\end{array}
\right.
\text{ when } g>0,
\end{equation}
and
\begin{equation}
\label{eqgndefinitionIalg}
I^\mathrm{que}(Q) = \left\{\begin{array}{ll}
I^\mathrm{fin}(Q),
&\mbox{if } Q\in\cP^{\mathrm{inv,fin}}(\widetilde{E}^\N),\\
\lim_{\tr \to \infty} I^\mathrm{fin}\big([Q]_\tr\big),
&\mbox{otherwise},
\end{array}
\right.
\text{ when } g=0,
\end{equation}
where
\begin{equation}
\label{eqnratefctexplicitalg}
I^\mathrm{fin}(Q) = H(Q \mid q_{\rho,\nu}^{\otimes\N}) 
+ (\alpha-1) \, m_Q \,H\big(\Psi_{Q} \mid \nu^{\otimes\N}\big).
\end{equation}
This rate function is lower semi-continuous, has compact level sets, has a unique zero
at $q_{\rho_g,\nu}^{\otimes\N}$, and is affine.
\end{theorem}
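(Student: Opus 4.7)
The plan is to follow a two-stage strategy: first identify the annealed LDP as a baseline (Theorem~\ref{aLDP} already supplies this via standard Sanov/Donsker--Varadhan for i.i.d.\ $\widetilde{E}$-valued samples with marginal $q_{\rho_g,\nu}$), then upgrade to a quenched LDP by controlling how much the fixed letter realization $X$ can constrain the empirical process $R_N^X$. Because trivially $P^\ast_g(R_N^X\in\cdot) \leq (\Pr\times P^\ast_g)(R_N^X\in\cdot)$, one automatically has $I^\mathrm{que}_g\ge I^\mathrm{ann}_g$, so the entire task is (i) to add the correct penalty in the quenched case, and (ii) to show this penalty is sharp in the lower bound.

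\textbf{Case $g>0$.} Here $\rho_g$ has exponentially decaying tails, so $E^\ast_g(\tau_1)<\infty$ and typical words are short. The upper bound has two pieces. For $Q\notin\cR$, I would use a Borel--Cantelli argument on $\nu^{\otimes\N}$: the concatenation $\kappa(Y^{(1)},Y^{(2)},\ldots)$ is merely a reordering of a sub-sequence of $X$ and, for $\nu^{\otimes\N}$-typical $X$, the empirical letter distribution along any such sub-sequence converges to $\nu^{\otimes\N}$; thus any $Q\notin\cR$ has quenched probability that is super-exponentially small. For $Q\in\cR$, the standard annealed upper bound $I^\mathrm{ann}_g(Q)=H(Q\mid q_{\rho_g,\nu}^{\otimes\N})$ passes through because concentration of $X$ makes the quenched/annealed gap sub-exponential. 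The lower bound is a change-of-measure argument: tilt $P^\ast_g$ to a renewal process whose joint word--letter law is close to $Q$, then show that for a.e.\ $X$ a typical sample from the tilted law produces $R_N^X$ near $Q$, again using $Q\in\cR$ to guarantee that the needed letter statistics are actually present in $X$.

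\textbf{Case $g=0$, $m_Q<\infty$.} This is the heart of the argument and the reason $(\alpha-1)\,m_Q\,H(\Psi_Q\mid\nu^{\otimes\N})$ appears. Heuristically: if $\Psi_Q\ne\nu^{\otimes\N}$, the quenched construction of an empirical measure near $Q$ forces us to locate, inside $X$, stretches whose letter empiricals are close to $\Psi_Q$. Under $\nu^{\otimes\N}$ such a stretch of length $\ell$ occurs with probability $\approx e^{-\ell H(\Psi_Q\mid\nu^{\otimes\N})}$ per starting position, so one must ``jump over'' roughly $L_\ell\approx e^{\ell H(\Psi_Q\mid\nu^{\otimes\N})}$ letters to reach the next usable stretch. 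Performing such a jump with a single renewal word costs $\rho(L_\ell)\approx L_\ell^{-\alpha}$, i.e.\ an exponential cost $\alpha\,\ell\,H(\Psi_Q\mid\nu^{\otimes\N})$, of which $\ell\,H(\Psi_Q\mid\nu^{\otimes\N})$ is already accounted for by the $H(Q\mid q_{\rho,\nu}^{\otimes\N})$ term (the annealed Sanov cost of the letters), leaving the surplus $(\alpha-1)H(\Psi_Q\mid\nu^{\otimes\N})$ per unit useful length; multiplying by the average useful length $m_Q$ per word yields the claimed additive term. To make this rigorous I would (a) discretize the letter/length spaces, (b) prove the upper bound by splitting each sample path into ``used'' and ``skipped'' stretches and combining Cramér-type estimates for the letters with the $\rho(\ell)\approx\ell^{-\alpha}$ asymptotics from (\ref{rhocond}), and (b) prove the matching lower bound by an explicit construction that alternates long skipping words with short words of the prescribed local structure, verifying via a second-moment / Borel--Cantelli argument that a.e.\ $X$ contains enough stretches of $\Psi_Q$-type empirical.

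\textbf{Case $g=0$, $m_Q=\infty$.} Handle by truncating words at length $\tr$. The map $Q\mapsto[Q]_\tr$ is continuous on $\cP^\mathrm{inv}(\widetilde E^\N)$; applying the finite-$m$ case to $[Q]_\tr$ and taking $\tr\to\infty$ with lower-semicontinuity of the truncated rate functions yields $I^\mathrm{que}(Q)=\lim_{\tr\to\infty}I^\mathrm{fin}([Q]_\tr)$. The standard issues (exponential tightness, compactness of level sets, affineness) are then verified on the truncated level and transferred via monotone passage. The main obstacle throughout is the matching argument in the third paragraph: getting the constant $(\alpha-1)$ sharp both ways simultaneously, under only the weak hypothesis (\ref{rhocond}) (not full regular variation), requires careful truncation of the long-word contribution and quantitative control on the number of ``$\Psi_Q$-typical'' windows in a generic $X$, which is where all the polynomial-tail technology in \cite{Bi08,BiGrdHo10} is deployed.
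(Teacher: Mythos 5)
A point of context first: the paper does not prove Theorem~\ref{qLDP} at all --- the result is imported from \cite{Bi08} and \cite{BiGrdHo10}, and the only argument offered in Section~\ref{S2.3} is the heuristic paragraph after the statement explaining where the extra term $(\alpha-1)\,m_Q\,H(\Psi_Q\mid\nu^{\otimes\N})$ comes from. Your third paragraph reproduces that heuristic faithfully, including the bookkeeping (a jump of length $e^{\ell H(\Psi_Q\mid\nu^{\otimes\N})}$ costs $\alpha\,\ell\,H(\Psi_Q\mid\nu^{\otimes\N})$ under (\ref{rhocond}), of which $\ell\,H(\Psi_Q\mid\nu^{\otimes\N})$ is already contained in $H(Q\mid q_{\rho,\nu}^{\otimes\N})$, consistent with the identity $H(Q\mid q_{\rho,\nu}^{\otimes\N})=m_Q\,H(\Psi_Q\mid\nu^{\otimes\N})+R(Q)$ that the paper later quotes in (\ref{arg2})). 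So at the level of intuition you are aligned with the source.

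As a proof, however, the proposal has genuine gaps, and they are exactly the steps you defer to \cite{Bi08,BiGrdHo10}. (i) The quenched upper bound at $g=0$: one must show that \emph{no} strategy for realizing $\{R_N^X\approx Q\}$ inside a typical $X$ is cheaper than the jump strategy; ``used/skipped stretches plus Cram\'er-type estimates'' does not suffice, because $R_N^X$ is a process-level object and the bound must hold uniformly over all placements of the renewal points and over neighborhoods in the weak topology, under the weak tail hypothesis (\ref{rhocond}) only --- this is the coarse-graining machinery of \cite{BiGrdHo10}, not a refinement one can wave at. (ii) For $g>0$, your description of the concatenation as ``a reordering of a sub-sequence of $X$'' is incorrect: $\kappa(Y)$ is the initial segment of $X$ itself, which is precisely what forces $Q\in\cR$ (equivalently $\Psi_Q=\nu^{\otimes\N}$, by (\ref{Requiv})) when word lengths are exponentially tight; and upgrading ``super-exponentially small for $Q\notin\cR$'' to an a.s.\ LDP upper bound with rate $+\infty$, for a fixed realization $X$ and uniformly over a neighborhood of $Q$, is not a routine Borel--Cantelli argument. (iii) For $m_Q=\infty$ you assert that truncation plus lower semicontinuity yields $I^\mathrm{que}(Q)=\lim_{\tr\to\infty}I^\mathrm{fin}([Q]_\tr)$; the existence of this limit and its identification as the rate function is itself a nontrivial lemma (\cite{BiGrdHo10}, Lemma A.1, quoted in (\ref{truncapproxcont})), not a consequence of the contraction principle alone. (iv) Lower semicontinuity, compact level sets, affineness, and uniqueness of the zero at $q_{\rho_g,\nu}^{\otimes\N}$ are stated but not argued. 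Since all of this is outsourced to the very references to which the theorem is attributed, your proposal does not go beyond what the paper itself does (cite the result); as a self-contained argument it is a plausible plan rather than a proof.
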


The difference between \eqref{eqgndefinitionIalgz} for $g>0$ and
(\ref{eqgndefinitionIalg}--\ref{eqnratefctexplicitalg}) for $g=0$ 
can be explained as follows. For $g=0$, the word length distribution 
$\rho$ has a polynomial tail. It therefore is only exponentially 
costly to cut out a few words of an exponentially large length in 
order to move to stretches in $X$ that are suitable to build a 
large deviation $\{R_N^X \approx Q\}$ with words whose length is 
of order 1. This is precisely where the second term in 
\eqref{eqnratefctexplicitalg} comes from: this term is the extra cost 
to find these stretches under the quenched law rather than to create 
them ``on the spot'' under the annealed law. For $g>0$, on the other hand, 
the word length distribution $\rho_g$ has an exponentially bounded 
tail, and hence exponentially long words are too costly, so that
suitable stretches far away cannot be reached. Phrased differently, $g>0$ 
and $\alpha \in [1,\infty)$ is qualitatively similar to $g=0$ 
and $\alpha=\infty$, for which we see that the expression in 
\eqref{eqnratefctexplicitalg} is finite if and only $\Psi_{Q} 
=\nu^{\otimes\N}$. It was shown in \cite{Bi08}, Lemma 2, 
that
\begin{equation}
\label{Requiv}
\Psi_{Q} = \nu^{\otimes\N} \quad \Longleftrightarrow \quad  
Q\in\cR \qquad \mbox{ on } \cP^{\mathrm{inv,fin}}(\widetilde{E}^\N),
\end{equation}
and so this explains why the restriction $Q\in\cR$ appears in 
\eqref{eqgndefinitionIalgz}. For more background, see \cite{BiGrdHo10}. 

Note that $I^\mathrm{que}(Q)$ requires a truncation approximation when 
$m_Q=\infty$, for which case there is no closed form expression like in
\eqref{eqnratefctexplicitalg}. As we will see later on, the cases 
$m_Q<\infty$ and $m_Q=\infty$ need to be separated. For later reference 
we remark that, for all $Q\in\cP^{\mathrm{inv}}(\widetilde{E}^\N)$,
\begin{equation}
\label{truncapproxcont}
\begin{aligned}
I^\mathrm{ann}(Q)
&= \lim_{\tr \to \infty} I^\mathrm{ann}([Q]_\tr)
= \sup_{\tr \in \N} I^\mathrm{ann}([Q]_\tr),\\
I^\mathrm{que}(Q)
&= \lim_{\tr \to \infty} I^\mathrm{que}([Q]_\tr)
= \sup_{\tr \in \N} I^\mathrm{que}([Q]_\tr),
\end{aligned}
\end{equation}
as shown in \cite{BiGrdHo10}, Lemma A.1.


\section{Proof of Theorem~\ref{freeenegvar}}
\label{S3}

We are now ready to return to the copolymer and start our variational analysis. 

In Sections~\ref{S3.1} and \ref{S3.2} we derive the variational formulas for the 
quenched and the annealed excess free energies and critical curves that were 
announced in Theorem~\ref{freeenegvar}. These variational formulas are stated 
in Theorems~\ref{varfloc} and \ref{varfloc1} below and imply part (i) of 
Theorem~\ref{freeenegvar}. In Section~\ref{S3.3} we state additional properties 
that imply parts (ii) and (iii). 


\subsection{Quenched excess free energy and critical curve}
\label{S3.1}

Let
\begin{equation}
\label{Zaltdef}
\widetilde Z_{n,0}^{\beta,h,\omega}
= E\left(\exp\left[\beta\sum_{k=1}^n 
(\omega_k+h)\,[{\rm sign}(\pi_{k-1},\pi_k)-1]\right]\,1_{\{\pi_n=0\}}\right),
\end{equation}
which differs from $\widetilde Z_n^{\beta,h,\omega}$ in (\ref{queexpartsum}) because of 
the extra indicator $1_{\{\pi_n=0\}}$. This indicator is harmless in the limit as 
$n\to\infty$ (see Bolthausen and den Hollander~\cite{BodHo97}, Lemma 2) and is added 
for convenience. To derive a variational expression for $g^\mathrm{que}(\beta,h)
=\lim_{n\to\infty} \frac{1}{n}\log \widetilde Z_{n,0}^{\beta,h,\omega}$ $\o-a.s.$, we 
use Theorem~\ref{qLDP} with
\begin{equation}
\label{convert}
X= \omega, \quad E = \R, \quad \widetilde{E} = \cup_{k\in\N}\,\R^k, 
\quad \nu\in\cP(\R), \quad \rho\in\cP(\N),
\end{equation}
where $\nu$ satisfies \eqref{mgffin} and $\rho$ satisfies (\ref{rhocond}), with
$\rho(n) = P(\{\pi\in\Pi\colon\,\pi_k \neq 0\,\,\forall\,1 \leq k < n,\pi_n=0\})$,
$n\in\N$, the excursion length distribution.

Abbreviate
\begin{equation}
\label{Csetdef}
\cC = \{Q\in\cP^\mathrm{inv}(\widetilde{E}^\N)\colon\,I^\mathrm{ann}(Q)<\infty\},
\qquad
\cC^\mathrm{fin} = \{Q\in\cC\colon\,m_Q<\infty\}. 
\end{equation}

\begin{theorem}
\label{varfloc}
Assume {\rm (\ref{mgffin})} and {\rm (\ref{rhocond})}. Fix $\beta,h>0$.\\
(i) The quenched excess free energy is given by
\begin{equation}
\label{gvarexp}
g^\mathrm{que}(\beta,h) = \inf\{g\in\R\colon\,S^\mathrm{que}(\beta,h;g)<0\},
\end{equation}
where
\begin{equation}
\label{Sdef}
S^\mathrm{que}(\beta,h;g) 
= \sup_{Q\in\cC^\mathrm{fin}\cap\cR}
\left[\Phi_{\beta,h}(Q)-g m_Q-I^\mathrm{ann}(Q)\right]
\end{equation}
with
\begin{eqnarray}
\label{Phidef}
\Phi_{\beta,h}(Q) &=& \int_{\widetilde{E}} (\widetilde\pi_1Q)(dy)\,
\log \phi_{\beta,h}(y),\\
\label{phidef}
\phi_{\beta,h}(y) &=& \tfrac12\left(1+e^{-2\beta h\,\tau(y)-2\beta\,\sigma(y)}\right),
\end{eqnarray}
where $\widetilde\pi_1\colon\,\widetilde{E}^\N\to\widetilde{E}$ is the projection 
onto the first word, i.e., $\widetilde\pi_1Q=Q\circ\widetilde\pi_1^{-1}$, and 
$\tau(y),\sigma(y)$ are the length, respectively, the sum of the letters in the 
word $y$.\\ 
(ii) An alternative variational formula at $g=0$ is $S^\mathrm{que}(\beta,h;0)
=S_{*}^\mathrm{que}(\beta,h)$ with
\begin{equation}
\label{Sdefalt}
S_*^\mathrm{que}(\beta,h) = \sup_{Q\in\cC^\mathrm{fin}} 
\left[\Phi_{\beta,h}(Q)-I^\mathrm{que}(Q)\right].
\end{equation}
(iii) The function $g \mapsto S^\mathrm{que}(\beta,h;g)$ is lower semi-continuous, convex 
and non-increasing on $\R$, is infinite on $(-\infty,0)$, and is finite, continuous and 
strictly decreasing on $(0,\infty)$. 
\end{theorem}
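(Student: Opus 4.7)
\noindent
\textit{Proof plan.} The plan is to rewrite the polymer partition sum in a renewal-based form and then apply the quenched LDP of Theorem~\ref{qLDP} via Varadhan's lemma.

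\textbf{Step 1 (renewal representation and Laplace transform).} Averaging over the independent $\pm$ signs of each excursion, the partition sum in \eqref{Zaltdef} factorizes as
\begin{equation*}
\widetilde Z_{n,0}^{\beta,h,\omega}
= \sum_{N=1}^n \sum_{\substack{\tau_1,\ldots,\tau_N\in\N\\ \tau_1+\cdots+\tau_N=n}}
\prod_{i=1}^N \rho(\tau_i)\,\phi_{\beta,h}(Y^{(i)}),
\end{equation*}
with $Y^{(i)}$ the $i$-th word of length $\tau_i$ cut out of $\omega$ and $\phi_{\beta,h}$ as in \eqref{phidef}. For $g>0$, tilting the excursion-length distribution via \eqref{rhoz} transforms the Laplace transform into
\begin{equation*}
\sum_{n\geq 1} e^{-gn}\widetilde Z_{n,0}^{\beta,h,\omega}
= \sum_{N\geq 1} \cN(g)^N\,E_g^\ast\!\left[\prod_{i=1}^N \phi_{\beta,h}(Y^{(i)})\right],
\end{equation*}
which converges if and only if $g>g^\mathrm{que}(\beta,h)$, reducing the problem to the large-$N$ asymptotics of the right-hand side.

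\textbf{Step 2 (variational formula via Varadhan).} Next I would apply Varadhan's lemma to the functional $Q\mapsto\Phi_{\beta,h}(Q)$ against the quenched LDP of Theorem~\ref{qLDP} at $g>0$. Since $I_g^\mathrm{que}$ is finite only on $\cR\cap\cC^\mathrm{fin}$ and equals $I^\mathrm{ann}(Q)+\log\cN(g)+gm_Q$ there by Lemma~\ref{rhozre}, the prefactor $\cN(g)^N$ cancels and one obtains, for every $g>0$,
\begin{equation*}
\lim_{N\to\infty}\tfrac{1}{N}\log\Bigl(\cN(g)^N\,E_g^\ast\bigl[\textstyle\prod_{i=1}^N\phi_{\beta,h}(Y^{(i)})\bigr]\Bigr) = S^\mathrm{que}(\beta,h;g).
\end{equation*}
Combined with the Laplace-transform criterion of Step~1, this gives $g^\mathrm{que}(\beta,h)=\inf\{g>0\colon S^\mathrm{que}(\beta,h;g)<0\}$, and extension to $g\in\R$ follows once (iii) is proved, yielding (i). For (ii) I would rerun the argument at $g=0$ using the alternative LDP rate function in \eqref{eqgndefinitionIalg}--\eqref{eqnratefctexplicitalg}: the extra penalty $(\alpha-1)m_Q H(\Psi_Q\mid\nu^{\otimes\N})$ vanishes on $\cR\cap\cC^\mathrm{fin}$ by \eqref{Requiv} and is strictly positive elsewhere on $\cC^\mathrm{fin}$, so Varadhan yields the formula \eqref{Sdefalt}, and matching both representations to the $g\downarrow 0$ limit of Step~1 gives equality with $S^\mathrm{que}(\beta,h;0)$.

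\textbf{Step 3 (properties of $S^\mathrm{que}(\beta,h;\cdot)$, and main obstacle).} The variational formula \eqref{Sdef} expresses $g\mapsto S^\mathrm{que}(\beta,h;g)$ as a pointwise supremum of affine functions with slopes $-m_Q\in[-\infty,-1]$, so it is automatically lower semi-continuous, convex, and non-increasing. Finiteness on $(0,\infty)$ follows from the Laplace-transform identity of Step~1 together with the $\omega$-a.s.\ subexponential growth of $\widetilde Z_{n,0}^{\beta,h,\omega}$; strict decrease follows from $m_Q\geq 1$ at any maximizer. The value $+\infty$ on $(-\infty,0)$ is exhibited by the single-word measures $Q_n = q_{\delta_n,\nu}^{\otimes\N}$: these lie in $\cR\cap\cC^\mathrm{fin}$ and satisfy $\Phi_{\beta,h}(Q_n)=O(1)$ and $I^\mathrm{ann}(Q_n)=-\log\rho(n)\sim\alpha\log n$, so $-gn$ dominates when $g<0$. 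The principal obstacle of the plan is the application of Varadhan's lemma in Step~2: $\log\phi_{\beta,h}$ is bounded below by $-\log 2$ but potentially unbounded above (through words with very negative $\sigma$), so the standard Varadhan hypotheses fail; one must truncate both the word length and the letter magnitude, establish the identity on the truncated space, and then pass to the limit via the monotone approximation \eqref{truncapproxcont} together with exponential tightness estimates (of the sort collected in the appendices). A related subtlety, needed for (ii), is that $I_g^\mathrm{que}$ and $I^\mathrm{que}$ coincide only on $\cR$, so reconciling the two variational formulas requires careful control of the $g\downarrow 0$ boundary.
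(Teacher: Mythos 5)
Your outline for parts (i) and (iii) is the paper's own route (renewal representation, Laplace transform in $g$, empirical word process, quenched LDP plus a truncated Varadhan argument for $g>0$, explicit single-word test measures $Q=q_{\delta_L,\nu}^{\otimes\N}$ for $g<0$, and the affine-supremum structure for convexity/lsc/strict decrease), and the obstacle you flag — unboundedness of $\log\phi_{\beta,h}$ from above — is indeed the one the paper handles by truncating the potential and using a reverse H\"older estimate together with the concentration bound of Lemma~\ref{mainlemma}. However, part (ii) as you propose it has a genuine gap. You cannot simply ``rerun the argument at $g=0$'' and let Varadhan with rate function $I^\mathrm{que}$ deliver \eqref{Sdefalt}: the entire control that makes the Varadhan upper bound work is driven by the term $-g\,m_{R_N^\omega}$ (see \eqref{Fub} and the estimate \eqref{estextra1}), and it disappears at $g=0$, where moreover $\rho$ itself has only a polynomial tail. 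The paper deliberately avoids Varadhan at $g=0$ and instead proves $S^\mathrm{que}(\beta,h;0)=S^\mathrm{que}_*(\beta,h)$ through the chain \eqref{Sin1}--\eqref{Sin3}. The easy link is the one your remark about the penalty $(\alpha-1)m_QH(\Psi_Q\mid\nu^{\otimes\N})$ gives, namely $S_*^\mathrm{que}\geq S^\mathrm{que}(\beta,h;0)$ via \eqref{Requiv}. The hard link is the reverse direction $\bar S^\mathrm{que}(\beta,h;0+)\geq S_*^\mathrm{que}(\beta,h)$, i.e.\ that letting $g\downarrow 0$ in the $\cR$-restricted formula \eqref{Sdef} recovers the unrestricted supremum with $I^\mathrm{que}$. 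This rests on Proposition~\ref{prop:connvarprinc}: every $Q$ (in particular every $Q\notin\cR$) can be approximated by measures $Q_n\in\cR$ with finite mean word length such that $I^\mathrm{ann}(Q_n)\to I^\mathrm{que}(Q)$ — so the penalty is exactly the extra annealed cost of building $Q$ from within $\cR$ — followed by a letter-discretization and word/potential truncation to pass from a finite alphabet to $E=\R$. Your plan contains neither this approximation idea nor a substitute for it; ``careful control of the $g\downarrow 0$ boundary'' is precisely where the real work of (ii) lies.

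A second, smaller soft spot is your claim that finiteness of $S^\mathrm{que}(\beta,h;\cdot)$ on $(0,\infty)$ ``follows from the Laplace-transform identity together with the $\omega$-a.s.\ subexponential growth of $\widetilde Z_{n,0}^{\beta,h,\omega}$''. That argument only controls $g>g^\mathrm{que}(\beta,h)$; in the localized phase it says nothing about $g\in(0,g^\mathrm{que}(\beta,h)]$, and convexity does not rule out the value $+\infty$ on an initial subinterval of $(0,\infty)$. The paper needs the genuinely quantitative bound $-g\,m_{R_N^\omega}+\Phi_{\beta,h}(R_N^\omega)\leq K(\omega,g)$ of Lemma~\ref{mainlemma} (or, alternatively, the fractional-moment computation of Section~\ref{S6.1}), both of which use the full moment condition \eqref{mgffin}; this input should be made explicit rather than attributed to subexponential growth of the partition sum. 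Minor quibbles: in your $g<0$ construction the word length must run along the support of $\rho$ for $I^\mathrm{ann}(Q_n)=-\log\rho(n)$ to make sense, and the ``if and only if'' for convergence of the Laplace transform should be weakened to convergence for $g>g^\mathrm{que}$ and divergence for $g<g^\mathrm{que}$, which is all that is needed.
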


\begin{proof}
The proof comes in 7 steps. Throughout the proof $\beta,h>0$ are fixed.

\medskip\noindent
{\bf 1.}
Let $t_n=t_n(\pi)$ denote the number of excursions in $\pi$ away from the interface
(recall that $\pi_n=0$ in (\ref{Zaltdef})). For $i=1,\ldots,t_n$, let $I_i=I_i(\pi)$ 
denote the $i$-th excursion interval in $\pi$. Then
\begin{equation}
\label{Hamexcsplit}
\beta\sum_{k=1}^n (\omega_k+h)[{\rm sign}(\pi_{k-1},\pi_k)-1] 
= \beta\sum_{i=1}^{t_n} \sum_{k \in I_i} (\omega_k+h)[{\rm sign}(\pi_{k-1},\pi_k)-1].  
\end{equation}
During the $i$-th excursion, $\pi$ cuts out the word $\omega_{I_i}=(\omega_k)_{k \in I_i}$ 
from $\omega$. Each excursion can be either above or below the interface, with probability 
$\tfrac12$ each, and so the contribution to $\widetilde Z_{n,0}^{\beta,h,\omega}$ in 
\eqref{Zaltdef} coming from the $i$-th excursion is
\begin{equation}
\label{jexcont}
\psi_{\beta,h}^\omega(I_i) 
= \tfrac12\left(1 + \exp\left[-2\beta \sum_{k \in I_i} (\omega_k+h)\right]\right).
\end{equation}  
Hence, putting $I_i=(k_{i-1},k_i]\cap\N$, we have
\begin{equation}
\label{Zniddef}
\widetilde Z_{n,0}^{\beta,h,\omega} = \sum_{N\in\N}\,\, \sum_{0=k_0<k_1<\cdots<k_N=n}\,\, 
\prod_{i=1}^N \rho(k_i-k_{i-1})\,\,\psi_{\beta,h}^\omega\big((k_{i-1},k_i]\big).
\end{equation} 
Summing on $n$, we get
\begin{equation}
\label{ZFNrel}
\sum_{n\in\N} e^{-gn}\,\widetilde Z_{n,0}^{\beta,h,\omega} = \sum_{N\in\N} F_N^{\beta,h,\omega}(g),
\qquad g \in [0,\infty),
\end{equation}
with (recall \eqref{rhoz})
\begin{equation}
\label{FNdef}
F_N^{\beta,h,\omega}(g) = \cN(g)^N\sum_{0=k_0<k_1<\cdots<k_N<\infty}\,\, 
\left(\prod_{i=1}^N \rho_g(k_i-k_{i-1})\right)\,\exp\left[\sum_{i=1}^N 
 \log \psi_{\beta,h}^\omega\big((k_{i-1},k_i]\big)\right].
\end{equation}

\medskip\noindent
{\bf 2.} 
Let
\begin{equation}
\label{empprocomega}
R_N^\omega = \frac{1}{N} \sum_{i=1}^N 
\delta_{\widetilde{\theta}^i(\omega_{I_1},\ldots,\omega_{I_N})^{\mathrm{per}}}
\end{equation}
denote the \emph{empirical process of $N$-tuples of words} in $\omega$ cut out by the
successive excursions. Then (\ref{FNdef}) gives
\begin{equation}
\label{FNexpr}
\begin{aligned}
F_N^{\beta,h,\omega}(g) 
&= \cN(g)^N\,E^\ast_g\left(\exp\left[N\int_{\widetilde{E}}
(\widetilde\pi_1R_N^\omega)(dy)\,\log \phi_{\beta,h}(y)\right]\right)\\
&= \cN(g)^N\,E^\ast_g\big(\exp\big[N\Phi_{\beta,h}(R_N^\omega)\big]\big)
\end{aligned}
\end{equation}
with $\Phi_{\beta,h}$ and $\phi_{\beta,h}$ defined in (\ref{Phidef}--\ref{phidef}). 
Next, let
\begin{equation}
\label{Slim}
\bar S^\mathrm{que}(\beta,h;g) = \limsup_{N\to\infty} 
\frac{1}{N} \log F_N^{\beta,h,\omega}(g), \qquad g \in [0,\infty),
\end{equation}
and note that the limsup exists and is constant (possibly infinity) $\omega$-a.s.\ because it 
is measurable w.r.t.\ the tail sigma-algebra of $\omega$ (which is trivial). By 
\eqref{exfreeenegdefalt}, the left-hand side of (\ref{ZFNrel}) is a power series that converges 
for $g>g^\mathrm{que}(\beta,h)$ and diverges for $g<g^\mathrm{que}(\beta,h)$. Hence we have
\begin{equation}
\label{roc}
g^\mathrm{que}(\beta,h) = \inf\{g\in\R\colon\,\bar{S}^\mathrm{que}(\beta,h;g)<0\}.
\end{equation}
Below we will see that $g \mapsto \bar{S}^\mathrm{que}(\beta,h;g)$ is strictly 
decreasing when finite, so that $\bar{S}^\mathrm{que}(\beta,h;g)$ changes sign
precisely at $g=g^\mathrm{que}(\beta,h)$.

\medskip\noindent
{\bf 3.} A \emph{naive} application of Varadhan's lemma to (\ref{FNexpr}--\ref{Slim}) based 
on the quenched LDP in Theorem~\ref{qLDP} yields that 
\begin{equation}
\label{Seqs}
\bar{S}^\mathrm{que}(\beta,h;g) = \log\cN(g) 
+ \sup_{Q\in\cP^\mathrm{inv}(\widetilde{E}^\N)} 
\left[\Phi_{\beta,h}(Q)-I^\mathrm{que}_g(Q)\right].
\end{equation}
This variational formula brings us close to where we want, because Lemma~\ref{rhozre} 
and the formulas for $I^\mathrm{que}_g(Q)$ given in Theorem~\ref{qLDP} tell us that
\begin{equation}
\label{Sext}
\mbox{r.h.s. }\eqref{Seqs} 
= \left\{\begin{array}{ll}
\sup\limits_{Q\in\cR}
\left[\Phi_{\beta,h}(Q)-gm_Q-I^\mathrm{ann}(Q)\right], 
&\mbox{if } g \in (0,\infty),\\
\sup\limits_{Q\in\cP^\mathrm{inv}(\widetilde{E}^\N)} 
\left[\Phi_{\beta,h}(Q)-I^\mathrm{que}(Q)\right], &\mbox{if } g=0,
\end{array}
\right.
\end{equation}
which is the same as the variational formulas in \eqref{Sdef} and \eqref{Sdefalt}, 
except that the suprema in \eqref{Sext} are not restricted to $\cC^\mathrm{fin}$. 
Unfortunately, the application of Varadhan's lemma is \emph{problematic}, because 
$Q\mapsto m_Q$ and $Q\mapsto\Phi_{\beta,h}(Q)$ are neither bounded nor continuous 
in the weak topology. The proof of (\ref{Seqs}--\ref{Sext}) therefore requires an 
\emph{approximation argument}, which is written out in Appendix~\ref{appB} and is
valid for $g \in (0,\infty)$. This approximation argument also shows how the restriction 
to $\cC^\mathrm{fin}$ comes in. This restriction is needed to make the variational 
formulas proper, namely, it is shown in Appendix~\ref{appA} that if $I^\mathrm{ann}$ 
is finite, then also $\Phi_{\beta,h}$ is finite.   
Thus, we have
\begin{equation}
\label{Sfinalpre}
\bar{S}^\mathrm{que}(\beta,h;g) = S^\mathrm{que}(\beta,h;g), 
\qquad g \in (0,\infty).
\end{equation}

\medskip\noindent
{\bf 4.} 
To include $g \in (-\infty,0)$ in (\ref{Sfinalpre}) we argue as follows. We see from 
(\ref{Phidef}--\ref{phidef}) and (\ref{FNexpr}) that $F_N^{\beta,h,\omega}(g) \geq 
\left[\tfrac12\cN(g)\right]^N$. Since $\cN(g)=\infty$ for $g\in (-\infty,0)$, it follows 
from (\ref{Slim}) that $\bar{S}^\mathrm{que}(\beta,h;g)=\infty$ for $g\in (-\infty,0)$. 
Moreover, we have
\begin{equation}
\label{Slb}
S^\mathrm{que}(\beta,h;g) \geq \log(\tfrac12) + \sup_{\rho'\in\cP(\N)}
\left[-g m_{\rho'}-h(\rho'\mid \rho)\right],
\end{equation}
which is obtained from (\ref{Sdef}--\ref{phidef}) by picking $Q=q'^{\otimes\N}$ with 
$q'(dx_1,\ldots,dx_m)=\rho'(m)\nu(dx_1)\times\cdots\times\nu(dx_m)$, $m\in\N$, $x_1,
\ldots,x_m \in \R$ (compare with \eqref{q0def}). By picking $\rho'(m)=\delta_{mL}$, 
$m\in\N$, with $L\in\N$ arbitrary, we get from \eqref{Slb} that $S^\mathrm{que}
(\beta,h;g) \geq \log(\tfrac12) - gL + \log\rho(L)$. Letting $L\to\infty$ and using
(\ref{rhocond}), we obtain that $S^\mathrm{que}(\beta,h;g)=\infty$ for $g\in (-\infty,0)$. 
Thus, \eqref{Sfinalpre} extends to
\begin{equation}
\label{Sfinal}
\bar{S}^\mathrm{que}(\beta,h;g) = S^\mathrm{que}(\beta,h;g), 
\qquad g\in\R\backslash \{0\}.
\end{equation}

\begin{figure}[htbp]
\vspace{1.5cm}
\begin{minipage}[hbt]{4cm}
\centering
\setlength{\unitlength}{0.3cm}
\begin{picture}(8,8)(-7,-2.2)
\put(-2,0){\line(10,0){10}}
\put(0,-6){\line(0,10){14}}
{\thicklines
\qbezier(2,-2.5)(0,1.5)(0,4)
\qbezier(-3,7)(-1.5,7)(-.2,7)
}
\put(8.5,-0.2){$g$}
\put(-2,9.5){$S^\mathrm{que}(\beta,h;g)$}
\put(-2,7.5){$\infty$}
\put(0,4){\circle*{.5}}
\put(0,7){\circle{.5}}
\end{picture}
\vspace{1.2cm}
\begin{center}
\qquad (1) $h < h^\mathrm{que}_c(\beta)$
\end{center}
\end{minipage}
\begin{minipage}[hbtb]{4cm}
\centering
\setlength{\unitlength}{0.3cm}
\begin{picture}(8,8)(-7,-2.2)
\put(-2,0){\line(8,0){10.5}}
\put(0,-6){\line(0,12){14}}
{\thicklines
\qbezier(2.5,-4)(1,-2.5)(0,0)
\qbezier(-3,7)(-1.5,7)(-.2,7)
}
\put(9,-0.2){$g$}
\put(-2,9.5){$S^\mathrm{que}(\beta,h;g)$}
\put(-2,7.5){$\infty$}
\put(0,0){\circle*{.5}}
\put(0,7){\circle{.5}}
\end{picture}
\vspace{1.2cm}
\begin{center}
\qquad (2) $h = h^\mathrm{que}_c(\beta)$
\end{center}
\end{minipage}
\begin{minipage}[hbt]{4cm}
\centering
\setlength{\unitlength}{0.3cm}
\begin{picture}(8,8)(-7,-2.2)
\put(-2,0){\line(8,0){10}}
\put(0,-6){\line(0,14){14}}
{\thicklines
\qbezier(2.5,-6.5)(1,-5.5)(0,-2)
\qbezier(-3,7)(-1.5,7)(-.2,7)
}
\put(8.5,-0.2){$g$}
\put(-2,9.5){$S^\mathrm{que}(\beta,h;g)$}
\put(-2,7.5){$\infty$}
\put(0,-2){\circle*{.5}}
\put(0,7){\circle{.5}}
\end{picture}
\vspace{1.2cm}
\begin{center}
\qquad (3) $h > h^\mathrm{que}_c(\beta)$
\end{center}
\end{minipage}
\vspace{-.4cm}
\begin{center}
\caption{\small Qualitative picture of $g \mapsto S^\mathrm{que}(\beta,h;g)$ 
for $\beta,h>0$.}
\label{fig-varfe}
\end{center}
\vspace{-.5cm}
\end{figure}

\medskip\noindent
{\bf 5.}
To complete the proof of (i) and (ii), we need to include $g=0$ in \eqref{Sfinal} and derive 
the alternative variational formula for $S^\mathrm{que}(\beta,h;0)$ given in \eqref{Sdefalt}.
In Appendix~\ref{appC} we will show that
\begin{equation}
\label{Sin1}
\bar{S}^\mathrm{que}(\beta,h;0+) \geq S^\mathrm{que}(\beta,h;0), \qquad
\bar{S}^\mathrm{que}(\beta,h;0+) \geq S_*^\mathrm{que}(\beta,h),
\end{equation}
where $\bar{S}^\mathrm{que}(\beta,h;0+)=\lim_{g \downarrow 0} \bar{S}^\mathrm{que}(\beta,h;g)$. 
Moreover, by \eqref{Sdef} and \eqref{Sfinalpre}, we have 
\begin{equation}
\label{Sin2}
\bar{S}^\mathrm{que}(\beta,h;0+) = S^\mathrm{que}(\beta,h;0+) \leq S^\mathrm{que}(\beta,h;0).
\end{equation}
Furthermore, from \eqref{Sdef} and \eqref{Sdefalt} it follows that  
\begin{equation}
\label{Sin3}
\begin{split}
S_*^\mathrm{que}(\beta,h) = \sup_{Q\in\cC^\mathrm{fin}} 
\left[\Phi_{\beta,h}(Q)-I^\mathrm{que}(Q)\right]
\geq \sup_{Q\in\cC^\mathrm{fin}\cap \cR}\left[\Phi_{\beta,h}(Q)-I^\mathrm{que}(Q)\right]
= S^\mathrm{que}(\beta,h;0),
\end{split}
\end{equation}
where the last equality uses that $I^\mathrm{que}=I^\mathrm{ann}$ on $\cC^\mathrm{fin}\cap\cR$ 
(recall \eqref{Requiv}). Combining (\ref{Sin1}--\ref{Sin3}), we obtain
\begin{equation}
\label{Seqsvar}
\bar{S}^\mathrm{que}(\beta,h;0+) = S^\mathrm{que}(\beta,h;0) =  S_*^\mathrm{que}(\beta,h).
\end{equation}
Hence \eqref{Sfinal} indeed extends to
\begin{equation}
\label{Sfinalext}
\bar{S}^\mathrm{que}(\beta,h;g) = S^\mathrm{que}(\beta,h;g), 
\qquad g\in\R.
\end{equation}
Combine \eqref{roc}, \eqref{Sdefalt} and (\ref{Seqsvar}--\ref{Sfinalext}) to get
parts (i) and (ii).

\medskip\noindent
{\bf 6.}
In Appendix~\ref{appA} we will prove that, for every $g \in (0,\infty)$, $\omega$-a.s.\ 
there exists a $K(\omega,g)<\infty$ such that
\begin{equation}
\label{Fub}
-gm_{R_N^\omega} + \Phi_{\beta,h}(R_N^\omega) \leq K(\omega,g) 
\qquad \forall\,N\in\N.
\end{equation}
Via (\ref{FNexpr}--\ref{Slim}) this implies that $\bar S^{\rm que}(\beta,h;g)<\infty$ 
for $g\in(0,\infty)$.

\medskip\noindent
{\bf 7.}
By (\ref{Sdef}), $g \mapsto S^\mathrm{que}(\beta,h;g)$ is a supremum of functions 
that are finite and linear on $\R$. Hence, $g \mapsto S^\mathrm{que}(\beta,h;g)$ is 
lower semi-continuous and convex on $\R$ and, being finite on $(0,\infty)$, is 
continuous on $(0,\infty)$. Moreover, since $m_Q\geq 1$, it is strictly decreasing 
on $(0,\infty)$ as well. This completes the proof of part (iii). 
\end{proof}

Fig.~\ref{fig-varfe} provides a sketch of $g \mapsto S^\mathrm{que}(\beta,h;g)$ for
$(\beta,h)$ drawn from $\cL^\mathrm{que}$, $\partial\cD^\mathrm{que}$ and $\mathrm{int}
(\cD^\mathrm{que})$, respectively, and completes the variational characterization in 
Theorem~\ref{varfloc}. In Section~\ref{S3.3} we look at $h \mapsto S^\mathrm{que}
(\beta,h;0)$ and obtain the picture drawn in Fig.~\ref{fig-varhc}, which is crucial
for our analysis.

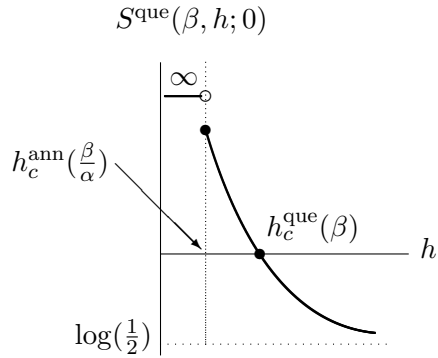
\begin{figure}[htbp]
\vspace{3.5cm}
\begin{center}
\setlength{\unitlength}{0.3cm}
\begin{picture}(8,4)(0,-4)
\put(0,0){\line(11,0){11}}
\put(0,-4.5){\line(0,13){13}}
{\thicklines
\qbezier(2,5.5)(4.5,-3)(9.5,-3.5)
\qbezier(.2,7)(1,7)(1.8,7)
}
\qbezier[30](0,-4)(5,-4)(10,-4)
\qbezier[80](2,-4)(2,0)(2,8.5)
\put(11.5,-0.2){$h$}
\put(-2,10){$S^\mathrm{que}(\beta,h;0)$}
\put(4.7,.8){$h^\mathrm{que}_c(\beta)$}
\put(4.4,0){\circle*{.5}}
\put(2,5.5){\circle*{.5}}
\put(2,7){\circle{.5}}
\put(-3.8,-4){$\log(\tfrac12)$}
\put(-2,4){\vector(1,-1){3.8}}
\put(-6.6,3.7){$h_c^\mathrm{ann}(\tfrac{\beta}{\alpha})$}
\put(0.4,7.5){$\infty$}
\end{picture}
\end{center}
\vspace{-0.5cm}
\caption{\small Qualitative picture of $h \mapsto S^\mathrm{que}(\beta,h;0)$ for $\beta>0$.}
\label{fig-varhc}
\end{figure}

\medskip\noindent
{\bf Remark:} A major advantage of the variational formula in (\ref{Sdefalt}) over 
the one in (\ref{Sdef}) at $g=0$ is that the supremum runs over $\cC^\mathrm{fin}$ 
rather than $\cC^\mathrm{fin} \cap \cR$. This will be crucial for the proof of 
Corollaries~\ref{hcubstrict} and \ref{hclbstrict} in Sections~\ref{S5} and \ref{S6}, 
respectively. 

\medskip\noindent
{\bf Remark:} In Section~\ref{S6} we will show that 
\begin{equation}
\label{BGpos}
S^\mathrm{que}\big(\beta,h_c^\mathrm{ann}(\tfrac{\beta}{\alpha});0\big)>0. 
\end{equation}
It will turn out that $S^\mathrm{que}(\beta,h_c^\mathrm{ann}(\tfrac{\beta}{\alpha});0)
<\infty$ for some choices of $\rho$, but we do not know whether it is finite in general.  


\subsection{Annealed excess free energy and critical curve}
\label{S3.2}

In order to exploit Theorem~\ref{varfloc}, we need an analogous variational expression
for the annealed excess free energy defined in (\ref{annexpartsum}--\ref{annexfreeeneg}).
This variational expression will serve as a \emph{comparison object} and will be crucial
for the proof of Corollaries~\ref{freeeneggap}--\ref{hclbstrict}.
 
\begin{theorem}
\label{varfloc1}
Assume {\rm (\ref{mgffin})} and {\rm (\ref{rhocond})}. Fix $\beta,h>0$.\\
(i) The annealed excess free energy is given by
\begin{equation}
\label{annf}
g^\mathrm{ann}(\beta,h) = \inf\{g\in\R\colon\,S^\mathrm{ann}(\beta,h;g)<0\},
\end{equation} 
where
\begin{equation}
\label{Sandef}
S^\mathrm{ann}(\beta,h;g) =
\sup_{Q\in\cC^\mathrm{fin}}\left[\Phi_{\beta,h}(Q)-gm_Q-I^\mathrm{ann}(Q)\right]. 
\end{equation}
(ii) The function $g\mapsto S^\mathrm{ann}(\beta,h;g) $ is lower semi-continuous, convex 
and non-increasing on $\R$, infinite on $(-\infty,g^{\rm ann}(\beta,h))$, and finite, 
continuous and strictly decreasing on $[g^{\rm ann}(\beta,h),\infty)$.
\end{theorem}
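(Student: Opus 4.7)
The plan is to mirror the proof of Theorem~\ref{varfloc} with the annealed LDP (Theorem~\ref{aLDP}) in place of the quenched LDP. The excursion decomposition leading from \eqref{Zniddef} to \eqref{FNexpr} is insensitive to the averaging, so taking $\E$ inside the sum gives
\[
\sum_{n\in\N} e^{-gn}\,\E\big(\widetilde Z_{n,0}^{\beta,h,\omega}\big)
= \sum_{N\in\N} F_N^\mathrm{ann}(g),\qquad
F_N^\mathrm{ann}(g) = \cN(g)^N\,\big(\Pr\otimes P^\ast_g\big)\!\big(e^{N\Phi_{\beta,h}(R_N^\omega)}\big).
\]
Writing $\bar S^\mathrm{ann}(\beta,h;g) = \limsup_{N\to\infty} N^{-1}\log F_N^\mathrm{ann}(g)$, the identity $g^\mathrm{ann}(\beta,h) = \inf\{g\in\R\colon \bar S^\mathrm{ann}(\beta,h;g)<0\}$ follows exactly as in \eqref{roc}, using the explicit closed form $F_N^\mathrm{ann}(g) = [\tfrac12(\cN(g)+\cN(g-\lambda))]^N$ with $\lambda := M(2\beta)-2\beta h$. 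For $g\in(0,\infty)$, Varadhan's lemma applied to Theorem~\ref{aLDP}, combined with Lemma~\ref{rhozre} to write $I_g^\mathrm{ann}(Q)=I^\mathrm{ann}(Q)+\log\cN(g)+g\,m_Q$, yields $\bar S^\mathrm{ann}(\beta,h;g)=S^\mathrm{ann}(\beta,h;g)$ as defined in \eqref{Sandef}. The truncation and approximation scheme of Appendix~\ref{appB} transfers verbatim with $I^\mathrm{que}$ replaced by $I^\mathrm{ann}$: neither $\Phi_{\beta,h}$ nor $m_Q$ is bounded or continuous in the weak topology, but the restriction to $\cC^\mathrm{fin}$ is forced by the $-g\,m_Q$ penalty together with the fact, proved in Appendix~\ref{appA}, that $\Phi_{\beta,h}$ is finite whenever $I^\mathrm{ann}$ is.

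The genuinely new step is to show $S^\mathrm{ann}(\beta,h;g)=+\infty$ on the whole of $(-\infty,g^\mathrm{ann}(\beta,h))$. For $g<0$ this is immediate: $\cN(g)=\infty$ forces $F_N^\mathrm{ann}(g)=\infty$, and testing \eqref{Sandef} against $Q=q_{\delta_L,\nu}^{\otimes\N}$ gives the lower bound $\log\tfrac12-gL+\log\rho(L)\to+\infty$ as $L\to\infty$ by \eqref{rhocond}, exactly as in Step~4 of the proof of Theorem~\ref{varfloc}. The genuinely new case is $0\le g<g^\mathrm{ann}(\beta,h)$, relevant only in the annealed localized phase $\lambda>0$. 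I would test \eqref{Sandef} against the tilted trial measure $Q=q_{\rho',\nu_{2\beta}}^{\otimes\N}$, where $\nu_{2\beta}(dx)=e^{-2\beta x-M(2\beta)}\nu(dx)$ has mean $-M'(2\beta)<0$. The relative entropy factorizes as
\[
I^\mathrm{ann}(Q) = h(\rho'\mid\rho) + m_{\rho'}\bigl[2\beta M'(2\beta)-M(2\beta)\bigr],
\]
and concentration of $S_m=x_1+\cdots+x_m$ around $-mM'(2\beta)$ under $\nu_{2\beta}^{\otimes m}$ gives $\Phi_{\beta,h}(Q)\approx 2\beta\,m_{\rho'}\bigl(M'(2\beta)-h\bigr)-\log 2$ once $\rho'$ is supported on large lengths. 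The $M'(2\beta)$ terms cancel, leaving
\[
\Phi_{\beta,h}(Q) - g m_Q - I^\mathrm{ann}(Q) \;\approx\; m_{\rho'}(\lambda-g) - h(\rho'\mid\rho) - \log 2,
\]
and picking $\rho'=\delta_L$ with $L$ large in the support of $\rho$ yields $m_{\rho'}=L$ while $h(\rho'\mid\rho)=-\log\rho(L)\sim\alpha\log L$, so the right-hand side $\to+\infty$ for every $g<\lambda=g^\mathrm{ann}(\beta,h)$.

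Part (ii) is then mostly a formal consequence. The defining supremum \eqref{Sandef} is a supremum of affine maps $g\mapsto\Phi_{\beta,h}(Q)-g m_Q-I^\mathrm{ann}(Q)$ with slopes $-m_Q\in(-\infty,-1]$, so $S^\mathrm{ann}(\beta,h;\cdot)$ is automatically lower semi-continuous, convex, non-increasing, and strictly decreasing wherever finite; continuity on the interior of its domain of finiteness is then automatic from convexity. Finiteness on $[g^\mathrm{ann}(\beta,h),\infty)$ is inherited from the explicit geometric-series formula for $F_N^\mathrm{ann}(g)$, which is finite precisely when $g\ge g^\mathrm{ann}(\beta,h)$ and yields $\bar S^\mathrm{ann}(\beta,h;g)\le 0$ there; at the boundary $g=g^\mathrm{ann}(\beta,h)$ itself, lower semi-continuity from the right pins $S^\mathrm{ann}$ down to a finite value. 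The main obstacle is the tilted-measure argument of the second paragraph: exploiting the precise exponential structure of $\phi_{\beta,h}$ through the natural tilt $\nu_{2\beta}$ is what unlocks the whole interval $[0,g^\mathrm{ann}(\beta,h))$, which the quenched-style Dirac lower bound alone is blind to.
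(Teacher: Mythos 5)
Your setup is fine: the annealed average does factorize over excursions, giving the closed form $F_N^{\mathrm{ann}}(g)=\big[\tfrac12\big(\cN(g)+\cN(g-\lambda)\big)\big]^N$ with $\lambda=M(2\beta)-2\beta h$ (this is exactly the paper's $\cN(\beta,h;g)$ in \eqref{Nid}), and your trial-measure computations for $g<g^{\mathrm{ann}}(\beta,h)$ (Dirac word lengths with letters tilted by $\nu_\beta$, resp.\ untilted for $g<0$) reproduce the paper's cases (II)--(III) and correctly give $S^{\mathrm{ann}}=\infty$ there. The genuine gap is in the other direction, i.e.\ in the upper bound $S^{\mathrm{ann}}(\beta,h;g)\leq\log\big[\tfrac12(\cN(g)+\cN(g-\lambda))\big]$ for $g\geq g^{\mathrm{ann}}(\beta,h)$, which is precisely what you need to get $S^{\mathrm{ann}}<0$ for $g>g^{\mathrm{ann}}$ in part (i) and finiteness on $[g^{\mathrm{ann}},\infty)$ in part (ii). You obtain it by asserting that the Varadhan/truncation argument of Appendix~\ref{appB} ``transfers verbatim'' with $I^{\mathrm{que}}$ replaced by $I^{\mathrm{ann}}$. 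It does not: the two crucial ingredients of that appendix are quenched statements. The reverse-H\"older correction term is controlled there through the $\omega$-a.s.\ bound \eqref{estextra1}, which rests on Lemma~\ref{mainlemma} (a Borel--Cantelli estimate for a \emph{fixed} disorder realization); its annealed counterpart requires bounding $\E\,E_0^\ast\big(\exp\big[-qN\bar\Phi^M_{\beta,h}(R_N^\omega)+\cdots\big]\big)$, and the per-word annealed weight then involves $M(2\beta(-q))-2\beta(-q)h$, which for the Hölder exponent $-q=p/(1-p)$ needed as $p\uparrow1$ exceeds any fixed $g$ (e.g.\ already for Gaussian disorder), so the correction term is not small. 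Likewise the final step of Appendix~\ref{appB} uses finiteness and hence continuity of $p\mapsto S^{\beta,h}(p)$ at $p=1$, which relies on $S^{\mathrm{que}}(p\beta,h;g)<\infty$ for all $p$; the annealed analogue is infinite for $p$ slightly above $1$ whenever $g$ is close to $g^{\mathrm{ann}}(\beta,h)$, which is exactly the regime you need. Your fallback at the boundary point, ``lower semi-continuity from the right pins $S^{\mathrm{ann}}$ down to a finite value'', is also not valid: lsc plus monotonicity only gives right-continuity, and $\lim_{g\downarrow g^{\mathrm{ann}}}S^{\mathrm{ann}}(\beta,h;g)=\sup_{g>g^{\mathrm{ann}}}S^{\mathrm{ann}}(\beta,h;g)$ could a priori be $+\infty$ without an actual bound at $g=g^{\mathrm{ann}}$.

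The paper closes this step without any LDP or Varadhan argument, and you should too: since $\Phi_{\beta,h}(Q)$ and $m_Q$ depend on $Q$ only through $\widetilde\pi_1Q$, and since by \eqref{Hiidred} $\inf\{H(Q\mid q_{\rho,\nu}^{\otimes\N})\colon\widetilde\pi_1Q=q\}=h(q\mid q_{\rho,\nu})$ (attained at $q^{\otimes\N}$), the supremum in \eqref{Sandef} reduces to a single-word variational problem over $q\in\cP(\widetilde E)$. Writing the integrand as $\log\cN(\beta,h;g)-h(q\mid q_{\beta,h;g})$ with the tilted reference measure $q_{\beta,h;g}(dy)\propto e^{-g\tau(y)}\phi_{\beta,h}(y)\,q_{\rho,\nu}(dy)$ (well-defined for $g\geq g^{\mathrm{ann}}(\beta,h)$) yields the exact identity $S^{\mathrm{ann}}(\beta,h;g)=\log\cN(\beta,h;g)$ on $[g^{\mathrm{ann}}(\beta,h),\infty)$, including the boundary point, from which both the sign statement in (i) and the finiteness, continuity and strict decrease in (ii) follow immediately. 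So the overall architecture of your proof is sound, but the Varadhan-based identification must be replaced by this exact single-word computation (or by an honest annealed substitute for Appendix~\ref{appB}, which you have not provided).
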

 
\begin{proof} 
Throughout the proof $\beta,h>0$ are fixed.

\medskip\noindent
(i) Replacing $\widetilde Z_n^{\beta,h,\omega}$ by $\E(\widetilde Z_n^{\beta,h,\omega})$ 
in (\ref{ZFNrel}--\ref{FNdef}), we obtain from (\ref{Slim}) that 
\begin{equation}
\label{Slimann}
\bar{S}^\mathrm{ann}(\beta,h;g) = \limsup_{N\to\infty} 
\frac{1}{N} \log \E\left(F_N^{\beta,h,\omega}(g)\right).  
\end{equation} 
Using (\ref{q0def}--\ref{rhoz}), (\ref{phidef}), (\ref{jexcont}) and (\ref{FNdef})
we compute
\begin{equation}
\label{Subproj}
\bar{S}^\mathrm{ann}(\beta,h;g) = \log \cN(\beta,h;g)
\end{equation}
with
\begin{equation}  
\label{Nid}
\begin{aligned}
\cN(\beta,h;g) 
&= \int_{\widetilde{E}} q_{\rho,\nu}(dy)\,e^{-g\tau(y)} \phi_{\beta,h}(y)\\ 
&= \sum_{m\in\N} \int_{x_1,\dots,x_m\in\R} \rho(m)\,
\nu(dx_1)\times\cdots\times\nu(dx_m)
\,e^{-gk}\,\tfrac12\Big(1+e^{-2\beta h m-2\beta[x_1+\cdots+x_m]}\Big)\\
&= \tfrac12 \sum_{m\in\N} \rho(m)\,e^{-gm}\, 
+ \tfrac12 \sum_{m\in\N} \rho(m)\,e^{-gm}\,\left[e^{-2\beta h + M(2\beta)}\right]^m\\
&= \tfrac12\,\cN(g) + \tfrac12\,\cN\big(g-[M(2\beta)-2\beta h]\big),
\end{aligned}
\end{equation}
where $\cN(g)$ is the normalization constant in \eqref{rhoz}. The right-hand side 
of \eqref{Subproj} has the behavior as sketched in Fig.~\ref{fig-varfe1}. It is 
therefore immediate that (\ref{annf}--\ref{Sandef}) is consistent with (\ref{gannid}), 
provided we have 
\begin{equation}
\label{S=Sbar}
S^\mathrm{ann}(\beta,h;g) = \bar{S}^\mathrm{ann}(\beta,h;g).
\end{equation}
To prove this equality we must distinguish three cases.

\medskip\noindent
(I) $g(\beta,h) \geq g^\mathrm{ann}(\beta,h) = 0 \vee [M(2\beta)-2\beta h]$. The proof 
comes in 2 steps. Note that the right-hand side of (\ref{Nid}) is finite.

\medskip\noindent
{\bf 1.} Note that $\Phi_{\beta,h}(Q)$ defined in (\ref{Phidef}) is a functional of 
$\widetilde\pi_1Q$. Moreover, by (\ref{spentrdef}),
\begin{equation}
\label{Hiidred}
\inf_{ {Q\in\cP^{\mathrm{inv}}(\widetilde{E}^\N)} \atop {\widetilde\pi_1 Q=q} } 
H(Q \mid q_{\rho,\nu}^{\otimes\N}) = h(q \mid q_{\rho,\nu}) \qquad \forall\,q \in 
\cP(\widetilde{E})
\end{equation}  
with the infimum \emph{uniquely} attained at $Q = q^{\otimes\N}$, where the right-hand 
side denotes the relative entropy of $q$ w.r.t.\ $q_{\rho,\nu}$. (The uniqueness of
the minimum is easily deduced from the strict convexity of relative entropy on finite
cylinders.) Consequently, the variational formula in \eqref{Sandef} reduces to  
\begin{equation}
\label{varred2}
\begin{split}
S^\mathrm{ann}(\beta,h;g)
&= \sup_{ {q \in \cP(\widetilde{E})} \atop {m_q<\infty,\,h(q \mid q_{\rho,\nu})<\infty} } 
\Big\{\int_{\widetilde{E}} q(dy)\,[-g\tau(y)+\log\phi_{\beta,h}(y)] 
- h(q \mid q_{\rho,\nu})\Big\}\cr
&=\sup_{ {q \in \cP(\N\times\R)} \atop {m_q<\infty,\,h(q \mid q_{\rho,\nu})<\infty} } 
\Big\{ \int_{\widetilde{E}} q(dy)\,[-g\tau(y)+\log\phi_{\beta,h}(y)]\\ 
&\qquad\qquad\qquad\qquad\qquad - \int_{\widetilde{E}} q(dy) 
\log \left(\frac{q(dy)}{q_{\rho,\nu}(dy)}\right)\Big\} 
\end{split}
\end{equation}
with $\phi_{\beta,h}(y)$ defined in (\ref{phidef}) and $m_q=\int_{\widetilde{E}} 
q(dy)\tau(y)$. 

\medskip\noindent
{\bf 2.}
Define
\begin{equation}
\label{qsol}
q_{\beta,h;g}(dy) = \frac{1}{\cN(\beta,h;g)}\,q_{\rho,\nu}(dy)\,
e^{-g\tau(y)}\,\phi_{\beta,h}(y), \qquad y\in \widetilde{E},
\end{equation}
with $\cN(\beta,h;g)$ the normalizing constant in (\ref{Nid}) (which is finite because 
$g\geq [M(2\beta)-2\beta h]$). Then the term between braces in the second equality of (\ref{varred2}) 
can be rewritten as
\begin{equation}
\label{barcrew}
\log \cN(\beta,h;g) - h(q \mid q_{\beta,h;g}),
\end{equation}
and so we have two cases:
\begin{itemize}
\item[(1)]
if both $m_{q_{\beta,h;g}}<\infty$ and $h(q_{\beta,h;g} \mid q_{\rho,\nu})<\infty$, then 
the supremum in (\ref{varred2}) has a unique maximizer at $q=q_{\beta,h;g}$;
\item[(2)] 
if $m_{q_{\beta,h;g}}=\infty$ and/or $h(q_{\beta,h;g} \mid q_{\rho,\nu})=\infty$,
then any maximizing sequence $(q_l)_{l\in\N}$ with $m_{q_l}<\infty$ and $h(q_l \mid 
q_{\rho,\nu})<\infty$ for all $l\in\N$ satisfies ${\rm w}-\lim_{l\to\infty} q_l = 
q_{\beta,h;g}$ (weak limit).
\end{itemize}
In both cases
\begin{equation}
\label{Subprojeq}
S^\mathrm{ann}(\beta,h;g) = \log \cN(\beta,h;g),
\end{equation} 
which settles (\ref{S=Sbar}) in view of (\ref{Subproj}). 

\medskip\noindent
(II) $g<[M(2\beta)-2\beta h]$.
It follows from (\ref{Subproj}--\ref{Nid}) that $\bar{S}^\mathrm{ann}(\beta,h,g)=\infty$. 
We therefore need to show that $S^\mathrm{ann}(\beta,h;g)=\infty$ as well. For $L\in\N$, 
let $q^L_\beta\in\cP(\widetilde{E})$ be defined by
\begin{equation}
\label{bqbetarels}
q^L_\beta(dx_1,\dots,dx_m) 
= \delta_{mL}\,\nu_\beta(dx_1)\times\cdots\times\nu_\beta(dx_m), 
\qquad m\in\N,\,x_1,\dots,x_m\in\R,
\end{equation}
where $\nu_\beta\in\cP(\R)$ is defined by
\begin{equation}
\label{nubetadefalt}
\nu_\beta(dx) = e^{-2\beta x-M(2\beta)}\,\nu(dx), \qquad x \in\R.
\end{equation}

\begin{figure}[htbp]
\vspace{1cm}
\begin{minipage}[hbt]{5cm}
\centering
\setlength{\unitlength}{0.3cm}
\begin{picture}(8,8)(-5,-2.2)
\put(0,0){\line(8,0){8}}
\put(0,-4){\line(0,12){12}}
{\thicklines
\qbezier(3,-5)(2,-3.5)(1,-1)
\qbezier(-3,7)(-1.5,7)(.8,7)
}
\qbezier[40](1,-1)(1,-3)(1,7)
\put(8.3,-0.2){$g$}
\put(0,9){$S^\mathrm{ann}(\beta,h;g)$}
\put(-2,7.5){$\infty$}
\put(1,-1){\circle*{.5}}
\put(1,7){\circle{.5}}
\end{picture}
\vspace{1.2cm}
\begin{center}
{\qquad (1) $h < h^\mathrm{ann}_c(\beta)$}
\end{center}
\end{minipage}
\begin{minipage}[hbtb]{5cm}
\centering
\setlength{\unitlength}{0.3cm}
\begin{picture}(8,8)(-5,-2.2)
\put(0,0){\line(8,0){8.5}}
\put(0,-4){\line(0,12){12}}
{\thicklines
\qbezier(3,-4.5)(1,-2.5)(0,0)
\qbezier(-3,7)(-1.5,7)(-.2,7)
}
\qbezier[40](4,0)(4,0)(0,0)
\qbezier[40](4,0)(4,0)(0,0)  
\put(8.7,-0.2){$g$}
\put(0,9){$S^\mathrm{ann}(\beta,h;g)$}
\put(-2,7.5){$\infty$}
\put(0,0){\circle*{.5}}
\put(0,7){\circle{.5}}
\end{picture}
\vspace{1.3cm}
\begin{center}
\qquad (2) $h = h^\mathrm{ann}_c(\beta)$
\end{center}
\end{minipage}
\begin{minipage}[hbt]{5cm}
\centering
\setlength{\unitlength}{0.3cm}
\begin{picture}(8,8)(-5,-2.2)
\put(0,0){\line(8,0){8}}
\put(0,-4){\line(0,12){12}}
{\thicklines
\qbezier(4,-5)(1,-3.5)(0,-1)
\qbezier(-3,7)(-1.5,7)(-.2,7)
}
\put(8.3,-0.2){$g$}
\put(0,9){$S^\mathrm{ann}(\beta,h;g)$}
\put(-2,7.5){$\infty$}
\put(0,-1){\circle*{.5}}
\put(0,7){\circle{.5}}
\end{picture}
\vspace{1.2cm}
\begin{center}
\qquad (3) $h > h^\mathrm{ann}_c(\beta)$
\end{center}
\end{minipage}
\begin{center}
\caption{\small Qualitative picture of $g \mapsto S^\mathrm{ann}(\beta,h;g)$ for $\beta,h>0$. 
Compare with Fig.~\ref{fig-varfe}.}
\label{fig-varfe1}
\end{center}
\vspace{-.4cm}
\end{figure}

Put $Q^L_\beta = (q^L_\beta)^{\otimes \N}$. Then $m_{Q^L_\beta}=L$, while
\begin{equation}
\label{imp1}
\begin{split}
I^\mathrm{ann}(Q_\beta^L)
&= H(Q^L_\beta \mid q_{\rho,\nu}^{\otimes\N})\\ 
&= h(q^L_\beta \mid q_{\rho,\nu})\\
&= \int_{\widetilde{E}} q^L_\beta(dy)\,\frac{dq^L_\beta}{dq_{\rho,\nu}}(y)\\
&= -\log \rho(L) + L h(\nu_\beta \mid \nu)\\
&= -\log \rho(L) + L \int_{\R} \nu_\beta(dx)\,\log\left(e^{-2\beta x-M(2\beta)}\right)\\
&= -\log \rho(L)-L\left[2\beta\,\E_{\nu_\beta}(\omega_1)+M(2\beta)\right]
\end{split}
\end{equation}
and
\begin{equation}
\label{imp2}
\begin{split}
\Phi_{\beta,h}(Q^L_\beta)
&= \int_{\widetilde{E}} q^L_\beta(dy)\,\log\phi_{\beta,h}(y)\\
&= \int_{\R^L} \nu_\beta(dx_1)\times\cdots\times\nu_\beta(dx_L)\,
\log\left(\tfrac12\left[1+e^{-2\beta hL-2\beta[x_1+\cdots+x_L]}\right]\right)\\
&\geq \log(\tfrac12)- L\left[2\beta \E_{\nu_\beta}(\o_1)+2\beta h\right].
\end{split}
\end{equation}
It follows that
\begin{equation}
\label{imp3}
\Phi_{\beta,h}(Q^L_\beta)-g\,m_{Q^L_\beta}-I^\mathrm{ann}(Q_\beta^L)
\geq \log(\tfrac12) + \log \rho(L) + L\left[M(2\beta)-2\beta h-g\right],
\end{equation}
which tends to infinity as $L\to\infty$ (use (\ref{rhocond}) and let $L\to\infty$
along the support of $\rho$).

\begin{figure}[htbp]
\vspace{2.7cm}
\begin{center}
\setlength{\unitlength}{0.35cm}
\begin{picture}(8,4)(0,-3.5)
\put(0,0){\line(10,0){10}}
\put(0,-4){\line(0,11){11}}
{\thicklines
\qbezier(4,0)(5,-2.7)(10.5,-3)
\qbezier(0,5)(2,5)(3.7,5)
}
\qbezier[70](0,-3.5)(3.5,-3.5)(11,-3.5)
\qbezier[40](4,-3.5)(4,1)(4,5)
\put(10.5,-0.2){$h$}
\put(-2,7.5){$S^\mathrm{ann}(\beta,h;0)$}
\put(4.7,1){$h^\mathrm{ann}_c(\beta)$}
\put(2,5.5){$\infty$}
\put(-.8,-.2){$0$}
\put(-3.5,-3.5){$\log(\tfrac12)$}
\put(4,0){\circle*{.5}}
\put(4,5){\circle{.5}}
\end{picture}
\end{center}
\vspace{-.2cm}
\caption{\small Qualitative picture of $h \mapsto S^\mathrm{ann}(\beta,h;0)$ for $\beta>0$.
Compare with Fig.~\ref{fig-varhc}.}
\label{fig-varhcbar}
\end{figure}

\medskip\noindent
(III) $M(2\beta)-2\beta h<0$ and $g \in [M(2\beta)-2\beta h,0)$.
Repeat the argument in (\ref{imp1}--\ref{imp3}) with $Q^L_\beta$ replaced
by $Q^L_0$ and keep only the first term in the right-hand side of (\ref{imp3}). 
This gives
\begin{equation}
\label{imp3alt}
\Phi_{\beta,h}(Q^L_0)-g\,m_{Q^L_0}-I^\mathrm{ann}(Q^L_0)
\geq \log(\tfrac12) + \log \rho(L) - Lg,
\end{equation}
which tends to infinity as $L\to\infty$ for $g<0$.
\end{proof}

Fig.~\ref{fig-varfe1} provides a sketch of $g \mapsto S^\mathrm{ann}(\beta,h;g)$ for
$(\beta,h)$ drawn from $\cL^\mathrm{ann}$, $\partial\cD^\mathrm{ann}$ and $\mathrm{int}
(\cD^\mathrm{ann})$, respectively, and completes the variational characterization in 
Theorem~\ref{varfloc1}. Fig.~\ref{fig-varhcbar} provides a sketch of $h \mapsto 
S^\mathrm{ann}(\beta,h;0)$.


\subsection{Proof of Theorem~\ref{freeenegvar}}
\label{S3.3}

Theorems~\ref{varfloc} and \ref{varfloc1} complete the proof of part (i) of 
Theorem~\ref{freeenegvar}. From the computations carried out in Section~\ref{S3.2} 
we also get parts (ii) and (iii) for the annealed model, but to get parts (ii) 
and (iii) for the quenched model we need some further information. 

Theorem~\ref{varfloc} provides no information on $S^\mathrm{que}(\beta,h;0)$. We know 
that, for every $\beta>0$, $h\mapsto S^\mathrm{que}(\beta,h;0)$ is lower semi-continuous, 
convex and non-increasing on $(0,\infty)$. Indeed, $h\mapsto \phi_{\beta,h}(k,l)$ is 
continuous, convex and non-increasing for all $k\in\N$ and $l\in\R$, hence $h\mapsto 
\Phi_{\beta,h}(Q)$ is lower semi-continuous, convex and non-increasing for every $Q\in
\cC^\mathrm{fin}$, and these properties are preserved under taking suprema. We know 
that $h\mapsto S^\mathrm{que}(\beta,h;0)$ is strictly negative on $(h_c^\mathrm{que}
(\beta),\infty)$. In Section~\ref{S6} we prove the following theorem, which corroborates 
the picture drawn in Fig.~\ref{fig-varhc} and completes the proof of parts (ii) and 
(iii) of Theorem~\ref{freeenegvar} for the quenched model.

\begin{theorem}
\label{asymptote}
For every $\beta>0$, 
\begin{equation}
S^\mathrm{que}(\beta,h;0)=S^\mathrm{que}_*(\beta,h)\left\{\begin{array}{ll}
= \infty &\mbox{ for } h < h_c^\mathrm{ann}(\beta/\alpha),\\
> 0 &\mbox{ for } h = h_c^\mathrm{ann}(\beta/\alpha),\\
< \infty &\mbox{ for } h > h_c^\mathrm{ann}(\beta/\alpha).
\end{array}
\right. 
\end{equation}
\end{theorem}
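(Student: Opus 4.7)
The argument splits naturally into the three cases identified by the position of $h$ relative to $h_c^{\mathrm{ann}}(\beta/\alpha)$, and I would work with the alternative formula $S_*^{\mathrm{que}}(\beta,h)=\sup_{Q\in\cC^{\mathrm{fin}}}[\Phi_{\beta,h}(Q)-I^{\mathrm{que}}(Q)]$ from Theorem~\ref{varfloc}(ii), since here there is no $\cR$-restriction to fight with.

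For $h<h_c^{\mathrm{ann}}(\beta/\alpha)$ I would produce a diverging test sequence. For $L$ in the support of $\rho$, define $Q^L=(q^L)^{\otimes\N}$ with $q^L(dx_1,\dots,dx_m)=\delta_{m,L}\prod_{i=1}^{L}\nu_{2\beta/\alpha}(dx_i)$ and $\nu_{\lambda}(dx)=e^{-\lambda x-M(\lambda)}\nu(dx)$. Because $Q^L$ is i.i.d.\ in the words and $\Psi_{Q^L}=\nu_{2\beta/\alpha}^{\otimes\N}$, the entropy terms compute explicitly to
\begin{equation*}
I^{\mathrm{que}}(Q^L)=-\log\rho(L)+L\bigl[2\beta M'(2\beta/\alpha)-\alpha M(2\beta/\alpha)\bigr],
\end{equation*}
while the Jensen bound $\log\tfrac12(1+e^{W})\geq -\log 2+(E[W])^{+}$ applied to $W=-2\beta hL-2\beta S_L$ with $E_{\nu_{2\beta/\alpha}^{\otimes L}}[S_L]=-LM'(2\beta/\alpha)$ yields $\Phi_{\beta,h}(Q^L)\geq -\log 2+2\beta L[M'(2\beta/\alpha)-h]$. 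Subtracting and using $\alpha M(2\beta/\alpha)=2\beta h_c^{\mathrm{ann}}(\beta/\alpha)$ gives
\begin{equation*}
\Phi_{\beta,h}(Q^L)-I^{\mathrm{que}}(Q^L)\geq \log\rho(L)+2\beta L\bigl[h_c^{\mathrm{ann}}(\beta/\alpha)-h\bigr]-\log 2,
\end{equation*}
which tends to $+\infty$ along the support of $\rho$ by \eqref{rhocond}, proving $S_*^{\mathrm{que}}(\beta,h)=\infty$.

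For $h>h_c^{\mathrm{ann}}(\beta/\alpha)$ I would establish a uniform upper bound by two applications of the Gibbs variational inequality. First, using $H(\Psi_Q|\nu^{\otimes\N})\geq h(\nu_Q|\nu)\geq -\lambda\bar\mu_Q-M(\lambda)$ with $\bar\mu_Q=E_Q[\sigma(Y^{(1)})]/m_Q$, combined with the standard tilt identity $H(Q|q_{\rho,\nu}^{\otimes\N})=H(Q|q_{\rho,\nu_\lambda}^{\otimes\N})-\lambda E_Q[\sigma(Y^{(1)})]-m_QM(\lambda)$ valid for shift-invariant $Q$, and taking $\lambda=2\beta/\alpha$, one obtains
\begin{equation*}
I^{\mathrm{que}}(Q)\geq H(Q|q_{\rho,\nu_{2\beta/\alpha}}^{\otimes\N})-2\beta E_Q[\sigma(Y^{(1)})]-\alpha m_QM(2\beta/\alpha).
\end{equation*}
A second Gibbs step then bounds $S_*^{\mathrm{que}}(\beta,h)$ by $\log\int \widetilde\phi_{\beta,h}(y)\,q_{\rho,\nu_{2\beta/\alpha}}(dy)$ with $\widetilde\phi_{\beta,h}(y)=\phi_{\beta,h}(y)\,e^{2\beta\sigma(y)+\alpha\tau(y)M(2\beta/\alpha)}$. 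A direct computation of this integral splits it into two $\rho$-weighted geometric sums, one of whose exponents is precisely $2\beta[h_c^{\mathrm{ann}}(\beta/\alpha)-h]m$. The naive estimate only closes when $h\geq h_c^{\mathrm{ann}}(\beta)$, so for $h\in(h_c^{\mathrm{ann}}(\beta/\alpha),h_c^{\mathrm{ann}}(\beta)]$ I would have to split $\log\phi_{\beta,h}$ according to the sign of $u=-2\beta h\tau-2\beta\sigma$ and apply differently adapted tilts on $\{u>0\}$ and $\{u\leq 0\}$, exploiting that $\{u>0\}$ becomes exponentially rare under $\nu_{2\beta/\alpha}^{\otimes\tau}$ exactly when $h>h_c^{\mathrm{ann}}(\beta/\alpha)$. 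The fine estimate \eqref{Fub} from Appendix~\ref{appA} will ensure this piecewise estimate stays uniformly bounded.

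The strict positivity at $h=h_c^{\mathrm{ann}}(\beta/\alpha)$ is the main obstacle. The test measure $Q^L$ from case one now gives only $\log\rho(L)-\log 2\to-\infty$; and neither varying the tilt $\lambda\neq 2\beta/\alpha$ nor introducing a two-block $(\ell,L-\ell)$ structure helps, since the $O(L)$ contribution of $\Phi_{\beta,h}$ and of $(\alpha-1)m_QH(\Psi_Q|\nu^{\otimes\N})$ cancel exactly. The plan is to replace $\delta_{m,L}$ by a tilted length distribution $\rho_g$ with $g\downarrow 0$, keep letters tilted to $\nu_{2\beta/\alpha}$, and show that after the leading $O(m_{\rho_g})$ terms cancel, the residual $g\,m_{\rho_g}+\log\cN(g)$ term vanishes while a strictly positive subleading contribution survives from the Gaussian fluctuations of the excursion sums. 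Quantitatively, this is the variational recasting of the Bodineau--Giacomin--Lacoin--Toninelli rare-stretch strategy; the key inequality $\alpha M(2\beta/\alpha)<2\beta M'(2\beta/\alpha)$, which follows from strict convexity of $M$ together with $M(0)=M'(0)=0$ and $M''(0)=1$, must be leveraged in a quantitative way to produce a strictly positive residual. Making this bookkeeping tight enough to yield a strictly positive limit, while remaining valid under the minimal assumptions \eqref{mgffin}--\eqref{rhocond}, is where the real work lies.
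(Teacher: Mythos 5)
Your treatment of the case $h<h_c^\mathrm{ann}(\beta/\alpha)$ is correct and coincides with the paper's own argument in Section~\ref{S6.2}: the same product test measures with fixed word length $L$ and letters tilted by $2\beta/\alpha$, the same entropy computation, and the same conclusion via \eqref{rhocond}. The other two cases, however, contain genuine gaps. For $h>h_c^\mathrm{ann}(\beta/\alpha)$ your two-tilt Gibbs bound does not close: after tilting all letters to $\nu_{2\beta/\alpha}$, the contribution of the constant $\tfrac12$ inside $\phi_{\beta,h}$ still carries the factor $e^{2\beta\sigma(y)+\alpha\tau(y)M(2\beta/\alpha)}$, so the one-word integral you propose contains the series $\sum_m\rho(m)\exp\{m[(\alpha-1)M(2\beta/\alpha)+M(-2\beta(\alpha-1)/\alpha)]\}$, which diverges for every $h$ as soon as $\alpha>1$ (so the estimate does not even close at $h\geq h_c^\mathrm{ann}(\beta)$); and your fallback plan (splitting on the sign of $u$, piecewise tilts, an appeal to \eqref{Fub}) is not carried out, the whole difficulty being precisely to extract the benefit of the term $(\alpha-1)m_Q H(\Psi_Q\mid\nu^{\otimes\N})$ uniformly in $Q$. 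The paper does not attempt this on the variational side at all: in Section~\ref{S6.1} it returns to the representation of $S^\mathrm{que}(\beta,h;g)$ as the $\omega$-a.s.\ limsup of $\tfrac1N\log F_N^{\beta,h,\omega}(g)$ and applies a fractional-moment estimate with exponent $t\in(1/\alpha,1]$ chosen so that $h=h_c^\mathrm{ann}(\beta t)$ (see \eqref{SNfmest}), followed by Markov and Borel--Cantelli, which yields the explicit bound \eqref{Sbdbd}; at $g=0$ this is finite exactly when $\sum_m\rho(m)^t<\infty$, i.e.\ when $t>1/\alpha$, i.e.\ when $h>h_c^\mathrm{ann}(\beta/\alpha)$.

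The critical case $h=h_c^\mathrm{ann}(\beta/\alpha)$ is the main missing idea. You correctly observe that fixed-length product test measures give only $\log\rho(L)-\log 2\to-\infty$, but your proposed remedy (tilted length law $\rho_g$ with $g\downarrow0$ and a fluctuation bookkeeping in the spirit of the rare-stretch strategy) is left entirely open, as you yourself acknowledge. The paper's proof in Section~\ref{S6.3} needs no fluctuation analysis: by the identity $H(Q\mid q_{\rho,\nu}^{\otimes\N})=m_Q\,H(\Psi_Q\mid\nu^{\otimes\N})+R(Q)$ with $R(Q)\geq 0$ (see \eqref{arg2}, taken from \cite{BiGrdHo10}), one has $I^\mathrm{que}(Q)\leq\alpha\,H(Q\mid q_{\rho,\nu}^{\otimes\N})$ on $\cC^\mathrm{fin}$, hence $S_*^\mathrm{que}(\beta,h)\geq\sup_{Q\in\cC^\mathrm{fin}}[\Phi_{\beta,h}(Q)-\alpha H(Q\mid q_{\rho,\nu}^{\otimes\N})]$; the one-word reduction \eqref{Hiidred} evaluates this lower bound as $\alpha\log\cN(\beta,h)$ with $\cN(\beta,h)=\int_{\widetilde{E}}[\phi_{\beta,h}(y)]^{1/\alpha}q_{\rho,\nu}(dy)$, and at $h=h_c^\mathrm{ann}(\beta/\alpha)$ this equals $\alpha\log E(f_\alpha(Z))$ with $E(Z)=1$ and $f_\alpha$ from \eqref{falphadef} strictly convex, so strict Jensen gives $E(f_\alpha(Z))>1$ (finiteness of $\cN$ is the easy bound \eqref{arg12}). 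The strictly positive residual you are after is thus produced not by second-order expansions in $g$ but by the $1/\alpha$-power of $\phi_{\beta,h}$ that appears once the entropy is weighted by $\alpha$; this device, and the inequality $I^\mathrm{que}\leq\alpha H(\cdot\mid q_{\rho,\nu}^{\otimes\N})$ behind it, are absent from your proposal, so as it stands the positivity at criticality (and with it Corollary~\ref{hclbstrict}) is not proved.
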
 

We close this section with the following remark. The difference between the variational 
formulas in \eqref{Sdef} (quenched model) and \eqref{Sandef} (annealed model) is that 
the supremum in the former runs over $\cC^\mathrm{fin} \cap \cR$ while the supremum in 
the latter runs over $\cC^\mathrm{fin}$. Both involve the annealed rate function 
$I^\mathrm{ann}$. However, the restriction to $\cR$ for the quenched model allows us to 
replace $I^\mathrm{ann}$ by $I^\mathrm{que}$ (recall \eqref{Requiv}). After passing 
to the limit $g \downarrow 0$, we can remove the restriction to $\cR$ to obtain the 
alternative variational formula for the quenched model given in \eqref{Sdefalt}. The 
latter turns out to be crucial in Sections~\ref{S5} and \ref{S6}. 

Note that the two variational formulas for $g\neq 0$ are different even when $\alpha=1$, 
although in that case $I^\mathrm{ann}=I^\mathrm{que}$ (compare Theorems~\ref{aLDP} 
and \ref{qLDP}). \emph{For $\alpha=1$ the quenched and the annealed critical curves coincide,
but the free energies do not}.


\section{Proof of Corollary~\ref{freeeneggap}}
\label{S4}

\begin{proof}
The claim is trivial for $h_c^\mathrm{que}(\beta)\leq h<h_c^\mathrm{ann}(\beta)$ because
$g^\mathrm{que}(\beta,h)=0<g^\mathrm{ann}(\beta,h)$. Therefore we may assume that 
$0<h<h_c^\mathrm{que}(\beta)$. Since $I^\mathrm{que}(Q) \geq I^\mathrm{ann}(Q)$, 
(\ref{Sdef}) and (\ref{Sandef}) yield
\begin{equation}
\label{Sineqs}
S^\mathrm{que}(\beta,h;0) \leq S^\mathrm{ann}(\beta,h;0)
\end{equation}
which, via (\ref{gvarexp}) and (\ref{annf}), implies that $g^\mathrm{que}(\beta,h) \leq 
g^\mathrm{ann}(\beta,h)$, a property that is also evident from (\ref{exfreeenegdef}) and 
(\ref{annexfreeeneg}). To prove that $g^\mathrm{que}(\beta,h)<g^\mathrm{ann}(\beta,h)$ 
for $0<h<h_c^\mathrm{que}(\beta)$, we combine (\ref{Sineqs}) with Figs.~\ref{fig-varfe} 
and \ref{fig-varfe1}. First note that
\begin{equation}
S^\mathrm{que}(\beta,h;g^\mathrm{ann}(\beta,h)) \leq S^\mathrm{ann}(\beta,h;g^\mathrm{ann}
(\beta,h)) < 0, \qquad 0<h<h_c^\mathrm{ann}(\beta). 
\end{equation}
Next, for $0<h<h^\mathrm{ann}_c(\beta)$, $g \mapsto S^\mathrm{ann}(\beta,h;g)$ blows up at 
$g=g^\mathrm{ann}(\beta,h)>0$ by jumping from a strictly negative value to infinity (see 
Fig.~\ref{fig-varfe1}). Since $S^\mathrm{que}(\beta,h;g^\mathrm{ann}(\beta,h))<0$, 
and $g \mapsto S^\mathrm{que}(\beta,h;g)$ is strictly decreasing and continuous when 
finite, the claim is immediate from Theorem~\ref{freeenegvar}(ii), which says that
$S^\mathrm{que}(\beta,h;g^\mathrm{que}(\beta,h))=0$. 
\end{proof}


\section{Proof of Corollary~\ref{hcubstrict}}
\label{S5}

\begin{proof}
Throughout the proof, $\alpha>1$ and $\beta>0$ are fixed. It follows from \eqref{spentrdef} 
and the remark made below it that
\begin{equation}
\label{red1}
H(Q \mid q_{\rho,\nu}^{\otimes\N}) \geq h(\widetilde\pi_1Q \mid q_{\rho,\nu}),
\qquad H(\Psi_Q \mid \nu^{\otimes\N}) \geq h(\pi_1 \Psi_Q \mid \nu),
\end{equation}
where $\widetilde\pi_1$ is the projection onto the first \emph{word} and $\pi_1$ is 
the projection onto the first \emph{letter}. Moreover, it follows from 
\eqref{PsiQdef} that
\begin{equation}
\label{red2}
\pi_1\Psi_Q = \pi_1\Psi_{(\widetilde\pi_1 Q)^{\otimes\N}}.
\end{equation} 
Since $m_Q = m_{(\widetilde\pi_1 Q)^{\otimes\N}}= m_{(\widetilde\pi_1 Q)}$, 
(\ref{red1}--\ref{red2}) combine with \eqref{Sdefalt} to give
\begin{equation}
\label{upbound}
S_*^\mathrm{que}(\beta,h)
\leq \sup_{ {q\in\cP(\widetilde{E})} \atop {m_q<\infty} } 
\left[\int_{\widetilde{E}} q(dy)\log\phi_{\beta,h}(y) - h(q \mid q_{\rho,\nu})
- (\alpha-1) m_q h(\pi_1\psi_q \mid \nu) \right],
\end{equation}
where
\begin{equation}
\begin{aligned}
\phi_{\beta,h}(y) &= \tfrac12 \left(1 + e^{-2\beta h m-2\beta [x_1+\dots+x_m]}\right),\\
q_{\rho,\nu}(dy) &= \rho(m)\nu(dx_1)\times\cdots\times\nu(dx_m),
\end{aligned}
\end{equation}
and
\begin{equation}
(\pi_1\psi_q)(dx) = \frac{1}{m_q} \sum_{m\in\N} r(m) 
\sum_{k=1}^m q_m(E^{k-1},dx,E^{m-k}) 
\end{equation}
with the notation
\begin{equation}
q(dy) = r(m)q_m(dx_1,\dots,dx_m), \qquad y=(x_1,\dots,x_m).
\end{equation}
Let
\begin{equation}
q_{\beta,h}^*(dy) = \frac{1}{\cN(\beta,h)} q_{\rho,\nu}(dy) \phi_{\beta,h}(y)
\end{equation}
with $\cN(\beta,h)$ the normalizing constant (which equals $\cN(\beta,h;0)$ in
\eqref{Nid} and is finite for $h \geq h_c^\mathrm{ann}(\beta)=M(2\beta)/2\beta$).
Therefore, combining the first two terms in the supremum in \eqref{upbound}, we 
obtain
\begin{equation}
S_*^\mathrm{que}(\beta,h) \leq \log\cN(\beta,h) 
- \inf_{ {q\in\cP(\widetilde{E})} \atop {m_q<\infty} }
\left[h(q \mid q_{\beta,h}^*)
+ (\alpha-1) m_q h(\pi_1\psi_q \mid \nu) \right].
\end{equation}
Since $\cN(\beta,h_c^\mathrm{ann}(\beta))=1$, we have
\begin{equation}
S_*^\mathrm{que}(\beta,h_c^\mathrm{ann}(\beta)) 
\leq - \inf_{ {q\in\cP(\widetilde{E})} \atop {m_q<\infty} }
\left[h(q \mid q_{\beta,h_c^\mathrm{ann}(\beta)}^*) 
+ (\alpha-1) m_q h(\pi_1\psi_q \mid \nu) \right].
\end{equation}
The first term achieves its minimal value zero at $q=q_{\beta,h_c^\mathrm{ann}
(\beta)}^*$ (or along a minimizing sequence converging to $q_{\beta,h_c^\mathrm{ann}
(\beta)}^*$). However, $\pi_1\psi_{q_{\beta,h_c^\mathrm{ann}(\beta)}^*} 
= \tfrac12\nu+\tfrac12\nu_\beta \neq \nu$, and so we have
\begin{equation}
S_*^\mathrm{que}(\beta,h_c^\mathrm{ann}(\beta)) < 0.
\end{equation} 
Since $S_*^\mathrm{que}(\beta,h_c^\mathrm{que}(\beta)) = 0$ and $h\mapsto
S_*^\mathrm{que}(\beta,h)$ is strictly decreasing on $(h_c^\mathrm{ann}
(\beta/\alpha),\infty)$, it follows that $h_c^\mathrm{que}(\beta)<h_c^\mathrm{ann}
(\beta)$.
\end{proof}

We close this section with the following remark. As (\ref{eqnratefctexplicitalg})
shows, $I^{\mathrm{fin}}(Q)$ depends on $q_{\rho,\nu}$, the reference law defined 
in (\ref{q0def}). Since the latter depends on the full law $\rho\in\cP(\N)$ of the 
excursion lengths, it is evident from Theorem~\ref{freeenegvar} (iii) and \eqref{Sdefalt}
that the quenched critical curve is \emph{not} a function of the exponent $\alpha$ 
in (\ref{rhocond}) alone. This supports the statement made in Section~\ref{S1.5}, 
item 6.


\section{Proof of Corollary~\ref{hclbstrict}}
\label{S6}

The proof is immediate from Theorem~\ref{asymptote} (recall Fig.~\ref{fig-varhc}), which 
is proved in Sections~\ref{S6.1}--\ref{S6.3}.


\subsection{Proof for $h>h_c^\mathrm{ann}(\beta/\alpha)$}
\label{S6.1}

\begin{proof}
Recall from (\ref{FNexpr}--\ref{Slim}) and (\ref{Sfinalext}) that
\begin{equation}
\label{fracmest}
\begin{split}
S^\mathrm{que}(\beta,h;g) 
&= \limsup_{N\to\infty} \frac{1}{N} \log F_N^{\beta,h,\omega}(g)\\
&= \log \cN(g) + \limsup_{N\to\infty} \frac{1}{N} \log 
E^\ast_g\big(\exp\big[N\Phi_{\beta,h}(R_N^\omega)\big]\big).
\end{split}
\end{equation}
Abbreviate
\begin{equation}
\label{SNdef}
S_N^\o(g) = E^\ast_g\big(\exp\big[N\Phi_{\beta,h}(R_N^\omega)\big]\big)
\end{equation}
and pick 
\begin{equation}
\label{thchoice}
t = [0,1], \qquad h=h_c^\mathrm{ann}(\beta t).
\end{equation}
Then the $t$-th moment of $S_N^\o(g)$ can be estimated as (recall (\ref{jexcont}--\ref{Zniddef}))
\begin{equation}
\label{SNfmest}
\begin{split}
\E\left([S_N^\o(g)]^t\right) 
&= \E\left(\left[E^\ast_g\left(\exp\left[\sum_{i=1}^N \log \left(\tfrac12
\left[1+e^{-2\beta\sum_{k\in I_i}(\o_k+h)}\right]\right)\right]\right)\right]^t\right)\\
&= \E\left(\left[E^\ast_g\left(\prod_{i=1}^N 
\tfrac12\left[1+e^{-2\beta\sum_{k\in I_i}(\o_k+h)}\right]\right)\right]^t\right)\\
&= \E\left(\left[\sum_{0<k_1<\cdots<k_N<\infty} \left\{\prod_{i=1}^N 
\rho_g(k_i-k_{i-1})\right\}\left\{\prod_{i=1}^N \tfrac12 \left[1+e^{-2\beta 
\sum_{k\in (k_{i-1},k_i]}(\o_k+h)}\right]\right\}\right]^t\right)\\
&\leq \E\left(\sum_{0<k_1<\cdots<k_N<\infty} \left\{\prod_{i=1}^N 
\rho_g(k_i-k_{i-1})^t\right\}
\left\{\prod_{i=1}^N 2^{-t}\left[1+e^{-2\beta t\sum_{k\in (k_{i-1},k_i]}(\o_k+h)}\right]\right\}\right)\\
&= \sum_{0<k_1<\cdots<k_N<\infty} \left\{\prod_{i=1}^N \rho_g(k_i-k_{i-1})^t\right\}
\left\{\prod_{i=1}^N 2^{-t} \left[1+e^{(k_i-k_{i-1})[M(2\beta t)-2\beta th]}\right]\right\}\\
&= 2^{(1-t)N} \sum_{0<k_1<\cdots<k_N<\infty} 
\left\{\prod_{i=1}^N \rho_g(k_i-k_{i-1})^t\right\}\\
&= \left(2^{1-t} \sum_{m\in\N} \rho_g(m)^t\right)^N.
\end{split}
\end{equation}
The inequality uses that $(u+v)^t \leq u^t+v^t$ for $u,v\geq 0$ and $t\in[0,1]$, while 
the fifth equality uses that $M(2\beta t)-2\beta th=0$ for the choice of $t$ and $h$ 
in \eqref{thchoice} (recall \eqref{hcannid}).

Let $K(g)$ denote the term between round brackets in the last line of \eqref{SNfmest}.
Then, for every $\epsilon>0$, we have
\begin{equation}
\label{momest} 
\begin{split}
\Pr\left(\frac1N \log S_N^\o(g)\geq \frac{1}{t}
\left[\log K(g)+\epsilon\right]\right)
&= \Pr\left([S_N^\o(g)]^t \geq K(g)^N\,e^{N\epsilon}\right)\\
&\leq \E\left([S_N^\o(g)]^t\right)\, 
K(g)^{-N}\,e^{-N\epsilon}
\leq e^{-N\epsilon}.
\end{split}
\end{equation}
Since this bound is summable it follows from the Borel-Cantelli lemma that 
\begin{equation}
\label{BCappl}
\limsup_{N\to\infty} \frac1N \log S_N^\o(g)
\leq \frac{1}{t} \log K(g) \qquad \o-a.s.
\end{equation}
Combine (\ref{fracmest}--\ref{SNdef}) and \eqref{BCappl} to obtain
\begin{equation}
\label{Sbdbd}
\begin{aligned}
S^\mathrm{que}(\beta,h;g) &\leq \log\cN(g) + \frac{1-t}{t}\log 2 + 
\frac{1}{t} \log \left(\sum_{m\in\N} \rho_g(m)^t\right)\\
&= \frac{1-t}{t}\log 2 + \frac{1}{t} \log \left(\sum_{m\in\N} e^{-gtm} \rho(m)^t\right).
\end{aligned}
\end{equation}
We see from (\ref{Sbdbd}) that $S^\mathrm{que}(\beta,h_c^\mathrm{ann}(\beta t);g)<\infty$ 
for $g>0$ and $t \in (0,1]$, and also for $g=0$ and $t \in (1/\alpha,1]$, i.e., $S^\mathrm{que}
(\beta,h;0)<\infty$ for $h \in (h_c^\mathrm{ann}(\beta/\alpha),h_c^\mathrm{ann}(\beta)]$.
This completes the proof because we already know that $S^\mathrm{que}(\beta,h;0) < 0$ for 
$h \in (h_c^\mathrm{ann}(\beta),\infty)$.
\end{proof}

Note that if $\sum_{m\in\N} \rho(m)^{1/\alpha}<\infty$, then $S^\mathrm{que}(\beta,
h_c^\mathrm{ann}(\beta/\alpha);0)<\infty$. This explains the remark made below 
(\ref{BGpos}). The above argument also shows that $S^\mathrm{que}(\beta,h;g)<\infty$ 
for all $\beta,h,g>0$, since for $\beta,g>0$ and any $h>h_c^\mathrm{ann}(\beta)
=M(2\beta)/2\beta$ the fifth equality in \eqref{SNfmest} becomes an inequality 
for any $t \in (0,1]$, while any $0<h\leq h_c^\mathrm{ann}(\beta)$ equals 
$h=h^\mathrm{ann}_c(\beta t)$ for some $t \in (0,1]$.


\subsection{Proof for $h<h_c^\mathrm{ann}(\beta/\alpha)$}
\label{S6.2}

\begin{proof}
For $L\in\N$, define (recall (\ref{nubetadefalt}))
\begin{equation}
\label{qLbetadef}
q^L_\beta(dx_1,\dots,x_m) = \delta_{mL}\,\nu_{\beta/\alpha}(dx_1)
\times\cdots\times\nu_{\beta/\alpha}(dx_m), 
\qquad m\in\N,\,x_1,\dots,x_m\in\R,
\end{equation}
and
\begin{equation}
\label{QLbetadef}
Q^L_\beta = (q^L_\beta)^{\otimes\N} \in \cP^{\mathrm{inv}}(\widetilde{E}^\N).
\end{equation}
We will show that
\begin{equation}
\label{hclbest}
h < h_c^\mathrm{ann}(\beta/\alpha) \quad \Longrightarrow \quad
\liminf_{L\to\infty} \frac{1}{L}\,\big[\Phi_{\beta,h}(Q^L_\beta) 
- I^{\mathrm{que}}(Q^L_\beta)\big] > 0, 
\end{equation}
which will imply the claim because $Q^L_\beta \in \cC^\mathrm{fin}$. (Recall (\ref{Csetdef}) 
and note that both $m_{Q^L_\beta}=L$ and $I^\mathrm{ann}(Q^L_\beta) = h(q^L_\beta \mid
q_{\rho,\nu}) = -\log\rho(L)+h(\nu_{\beta/\alpha} \mid \nu_\beta)$ are finite.)    

We have (recall (\ref{Phidef}) and (\ref{phidef}))
\begin{equation}
\begin{aligned}
\Phi_{\beta,h}(Q^L_\beta) &= \int_{\widetilde{E}} 
q^L_\beta(dy) \log \phi_{\beta,h}(y),\\
H(Q^L_\beta \mid q_{\rho,\nu}^{\otimes\N}) &= h(q^L_\beta \mid q_{\rho,\nu})
= \int_{\widetilde{E}} q^L_\beta(dy) \log
\Big(\frac{q^L_\beta(dy)}{q_{\rho,\nu}(dy)}\Big).
\end{aligned}
\end{equation}
Dropping the $1$ in front of the exponential in (\ref{phidef}), we obtain (similarly
as in (\ref{imp1}--\ref{imp3}))
\begin{equation}
\label{PhiHdif}
\begin{aligned}
&\Phi_{\beta,h}(Q^L_\beta) - H(Q^L_\beta \mid q_{\rho,\nu}^{\otimes\N})\\
&\qquad \geq \log(\tfrac12) + \int_{\widetilde{E}} q^L_\beta(dy)\,
\log\left[\frac{e^{-2\beta h\tau(y)-2\beta\sigma(y)}\,
q_{\rho,\nu}(dy)}{q^L_\beta(dy)}\right]\\
&\qquad = \log(\tfrac12) + \int_{\R^L} \nu_{\beta/\alpha}^{\otimes L}(dx_1,\dots,dx_L)
\,\log\left[e^{-2\beta h L}\,e^{-2\beta[x_1+\cdots+x_L]}\,
\frac{d\nu^{\otimes L}}{d\nu_{\beta/\alpha}^{\otimes L}}(x_1,\dots,x_L)\,\rho(L)\right]\\
&\qquad = \log(\tfrac12) + \int_{\R^L} \nu_{\beta/\alpha}^{\otimes L}(dx_1,\dots,dx_L)
\,\log\left[e^{[M(2\beta)-2\beta h]L}\,\frac{d\nu_\beta^{\otimes L}}
{d\nu_{\beta/\alpha}^{\otimes L}}(x_1,\dots,x_L)\,\rho(L)\right]\\
&\qquad = \log(\tfrac12) + [M(2\beta)-2\beta h]\,L
-h\big(\nu_{\beta/\alpha} \mid \nu_\beta\big)\,L
+\log\rho(L). 
\end{aligned}
\end{equation}
Furthermore, from (\ref{qLbetadef}) we have (recall (\ref{PsiQdef}))
\begin{equation}
\label{mQPsiQid}
m_{Q^L_\beta}=L, \qquad \Psi_{Q^L_\beta} = \nu_{\beta/\alpha}^{\otimes\N},
\end{equation}
which gives
\begin{equation}
\label{Idiffexpl}
(\alpha-1)\,m_{Q^L_\beta}\,H\big(\Psi_{Q^L_\beta} \mid \nu^{\otimes\N}\big)
= (\alpha-1)\,L\,h(\nu_{\beta/\alpha} \mid \nu).
\end{equation}

Combining (\ref{PhiHdif}--\ref{Idiffexpl}), recalling 
(\ref{eqgndefinitionIalg}--\ref{eqnratefctexplicitalg}) and using that
$\lim_{L\to\infty} L^{-1}\log \rho(L)=0$ by (\ref{rhocond}) when $L\to\infty$ 
along the support of $\rho$, we arrive at
\begin{equation}
\label{alldif}
\begin{aligned}
\liminf_{L\to\infty} \frac{1}{L}\,\big[\Phi_{\beta,h}(Q^L_\beta) 
- I^{\mathrm{que}}(Q^L_\beta)\big] 
&\geq [M(2\beta)-2\beta h]-h\big(\nu_{\beta/\alpha} \mid \nu_\beta\big)
-(\alpha-1)\,h(\nu_{\beta/\alpha} \mid \nu)\\
&= \alpha M(\tfrac{2\beta}{\alpha}) - 2\beta h
= 2\beta\,[h_c^\mathrm{ann}(\beta/\alpha)-h],
\end{aligned}
\end{equation}
where the first equality uses the relation (recall \eqref{hcannid} and \eqref{nubetadefalt})
\begin{equation}
\begin{aligned}
&h\big(\nu_{\beta/\alpha} \mid \nu_\beta\big)
+(\alpha-1)\,h(\nu_{\beta/\alpha} \mid \nu)\\
&\qquad = \int_{l\in\R} \nu_{\beta/\alpha}(dl)\,
\left(\left[-\tfrac{2\beta}{\alpha}\,l-M\big(\tfrac{2\beta}{\alpha}\big)\right]
+\left[2\beta\,l+M(2\beta)\right]
+(\alpha-1)\left[-\tfrac{2\beta}{\alpha}l-M\big(\tfrac{2\beta}{\alpha}\big)\right]\right)\\ 
&\qquad = M(2\beta)-\alpha M\big(\tfrac{2\beta}{\alpha}\big).
\end{aligned}
\end{equation} 
Note that (\ref{alldif}) proves (\ref{hclbest}).
\end{proof}


\subsection{Proof for $h=h_c^\mathrm{ann}(\beta/\alpha)$}
\label{S6.3}

\begin{proof}
Our starting point is (\ref{Sdefalt}), where (recall Theorem~\ref{qLDP})
\begin{equation}
\label{arg1}
I^\mathrm{que}(Q) = I^\mathrm{fin}(Q) 
= H(Q \mid q_{\rho,\nu}^{\otimes\N}) + (\alpha-1)\,m_Q\,H(\Psi_Q \mid \nu^{\otimes\N}),
\qquad Q \in \cC^\mathrm{fin}.
\end{equation}
The proof comes in 4 steps.

\medskip\noindent
{\bf 1.} As shown in Birkner, Greven and den Hollander~\cite{BiGrdHo10}, Equation (1.32),
\begin{equation}
\label{arg2}
H(Q \mid q_{\rho,\nu}^{\otimes\N}) = m_Q\,H(\Psi_Q \mid \nu^{\otimes\N}) + R(Q),
\end{equation}  
where $R(Q) \geq 0$ is the ``specific relative entropy w.r.t.\ $\rho^{\otimes\N}$ of 
the word length process under $Q$ conditional on the concatenation''.  Combining 
(\ref{arg1}--\ref{arg2}), we have $I^\mathrm{que}(Q) \leq \alpha\,H(Q \mid 
q_{\rho,\nu}^{\otimes\N})$, which yields
\begin{equation}
\label{arg3}
S_*^\mathrm{que}(\beta,h) \geq \sup_{Q\in\cC^\mathrm{fin}} 
\big[\Phi_{\beta,h}(Q) - \alpha\,H(Q \mid q_{\rho,\nu}^{\otimes\N})\big].
\end{equation}

\medskip\noindent
{\bf 2.} The variational formula in the right-hand side of (\ref{arg3}) can be computed 
similarly as in part (I) of Section~\ref{S3.2}. Indeed,
\begin{equation}
\label{arg4}
\mbox{r.h.s. } (\ref{arg3}) = \sup_{ {q\in \cP(\widetilde{E})} \atop 
{m_q<\infty,\,h(q\mid q_{\rho,\nu})<\infty} }
\left[\int_{\widetilde{E}} q(dy) \log \phi_{\beta,h}(y) 
- \alpha\,h(q \mid q_{\rho,\nu})\right].
\end{equation}
Define
\begin{equation}
\label{arg5}
q_{\beta,h}(dy) = \frac{1}{\cN(\beta,h)}\,[\phi_{\beta,h}(y)]^{1/\alpha}
\,q_{\rho,\nu}(dy),
\end{equation}
where $\cN(\beta,h)$ is the normalizing constant. Then the term between square brackets
in the right-hand side of (\ref{arg4}) equals $\alpha \log \cN(\beta,h)-\alpha h(q\mid
q_{\beta,h})$, and hence
\begin{equation}
\label{arg6}
S_*^\mathrm{que}(\beta,h) \geq \alpha \log \cN(\beta,h),
\end{equation}
provided $\cN(\beta,h)<\infty$ so that $q_{\beta,h}$ is well-defined.

\medskip\noindent
{\bf 3.} 
Abbreviate $\mu=2\beta/\alpha$. Since $h_c^\mathrm{ann}(\beta/\alpha) = M(\mu)/\mu$, 
we have
\begin{equation}
\label{arg7}
\cN\big(\beta,h_c^\mathrm{ann}(\beta/\alpha)\big)
= \sum_{m\in\N} \rho(m) \int_{\R^m} \nu(dx_1)\times\cdots\times\nu(x_m)\,
\left\{\tfrac12\left(1+e^{-\alpha(M(\mu)m+\mu[x_1+\cdots+x_m])}\right)\right\}^{1/\alpha}. 
\end{equation}
Let $Z$ be the random variable on $(0,\infty)$ whose law $P$ is equal to the law of 
$e^{-(M(\mu)m+\mu[x_1+\cdots+x_m])}$ under $\rho(m)\,\nu(dx_1)\times\cdots\times\nu(x_m)$. 
Let 
\begin{equation}
\label{falphadef}
f_\alpha(z) = \{\tfrac12(1+z^\alpha)\}^{1/\alpha}, \qquad z>0.
\end{equation} 
Then
\begin{equation}
\label{arg8}
\mbox{r.h.s. } (\ref{arg7}) = E(f_\alpha(Z)).
\end{equation}
We have $E(Z)=1$. Moreover, an easy computation gives
\begin{equation}
\label{arg9}
\begin{aligned}
f_\alpha'(z) &= (\tfrac12)^{1/\alpha}\,(1+z^\alpha)^{(1/\alpha)-1}\,z^{\alpha-1},\\
f_\alpha''(z) &= (\tfrac12)^{1/\alpha}\,(1+z^\alpha)^{(1/\alpha)-2}\,z^{\alpha-2}\,
(\alpha-1),
\end{aligned}
\end{equation}
so that $f_\alpha$ is strictly convex. Therefore, by Jensen's inquality and the fact that $P$
is not a point mass, we have
\begin{equation}
\label{arg10}
E(f_\alpha(Z)) > f_\alpha(E(Z)) = f_\alpha(1) = 1.
\end{equation}
Combining (\ref{arg6}--\ref{arg8}) and (\ref{arg10}), we arrive at
\begin{equation}
\label{arg11}
S_*^\mathrm{que}\big(\beta,h_c^\mathrm{ann}(\beta/\alpha)\big) > 0,
\end{equation}
which proves the claim. 

\medskip\noindent
{\bf 4.} 
It remains to check that $\cN(\beta,h_c^\mathrm{ann}(\beta/\alpha))<\infty$. But 
$f_\alpha(z) \leq (\tfrac12)^{1/\alpha}(1+z)$, $z>0$, and so we have 
\begin{equation}
\label{arg12}
\cN\big(\beta,h_c^\mathrm{ann}(\beta/\alpha)\big) 
\leq (\tfrac12)^{1/\alpha}(1+E(Z))
\leq 2^{1-(1/\alpha)}<\infty.
\end{equation}
\end{proof}


\section{Proof of Corollary~\ref{Kcslope}}
\label{S7}
  
\begin{proof}
The proof comes in 6 Steps. In Steps 1--3 we give the proof for the case where the 
disorder $\omega$ is standard Gaussian and the excursion length distribution $\rho$ 
satisfies $\rho(k) \sim Ak^{-\alpha}$ as $k\to\infty$ for some $0<A<\infty$ and 
$1<\alpha<2$. In Steps 4--6 we explain how to extend the proof to arbitrary $\omega$ 
and $\rho$ satisfying \eqref{mgffin} and \eqref{rhocond}. 

\medskip\noindent
{\bf 1.} 
Our starting point is \eqref{arg6} with
\begin{equation}
\label{slope1}
\begin{split}
\cN(\beta,h)&= \sum_{m\in\N} \rho(m) \int_{\R^m} \nu(dx_1)\times\cdots\times\nu(dx_m)
\left\{\tfrac12\left(1+e^{-2\beta h m - 2\beta(x_1+\cdots+x_m)}
\right)\right\}^{1/\alpha}\cr
&= \sum_{m\in\N} \rho(m) \int_{l\in\R} \nu^{\circledast m}(dl)
\left\{\tfrac12\left(1+e^{-2\beta h m - 2\beta l}\right)\right\}^{1/\alpha},
\end{split}
\end{equation}
where $\nu^{\circledast m}$ is a $m$-fold convolution of $\nu$.
Pick $h=B\beta/\alpha$ with $B \geq 1$, introduce the variables
\begin{equation}
\label{slope2}
x = l/\sqrt{m}, \qquad y = (\beta/\alpha)^2 m,
\end{equation}
and write out
\begin{equation}
\label{slope3}
\begin{aligned}
\cN(\beta,B\beta/\alpha)
&= \sum_{m\in\N} \rho(m) \int_{l\in\R} N(0,m)(dl)
\left\{\tfrac12\left(1+e^{-2B(\beta^2/\alpha)m - 2\beta l}\right)\right\}^{1/\alpha}\\
&= \sum_{y\in (\beta/\alpha)^2\N} \rho(y(\alpha/\beta)^2)
\int_{x\in\R} N(0,1)(dx)\left\{\tfrac12
\left(1+e^{-\alpha[2By+2\sqrt{y}x]}\right)\right\}^{1/\alpha}\\
&= \sum_{y\in (\beta/\alpha)^2\N} \rho(y(\alpha/\beta)^2)\,
E_{y,B}(f_\alpha(Z)),
\end{aligned}
\end{equation}
where $N(0,k)$ is the Gaussian distribution with mean $0$ and variance $k$, $f_\alpha$
is the function defined in \eqref{falphadef}, and $Z$ is the random variable
\begin{equation}
\label{slope4}
Z= e^{-2By-2\sqrt{y}X} \mbox{ with } X \mbox{ standard Gaussian},
\end{equation}
whose law we denote by $P_{y,B}$. Substract $1$ to obtain
\begin{equation}
\label{slope5}
\cN(\beta,B\beta/\alpha)-1 = \sum_{y\in (\beta/\alpha)^2\N} \rho(y(\alpha/\beta)^2)
\left[E_{y,B}(f_\alpha(Z))-1\right].
\end{equation}

\medskip\noindent
{\bf 2.}
Suppose that $\rho(m) \sim Am^{-\alpha}$ as $m\to\infty$ for some $0<A<\infty$ and 
$1<\alpha<2$. Then, letting $\beta \downarrow 0$ in \eqref{slope5}, we obtain
\begin{equation}
\label{slope6}
\begin{aligned}
\lim_{\beta \downarrow 0} \frac{1}{\beta^{2(\alpha-1)}}[\cN(\beta,B\beta/\alpha)-1]
= \frac{A}{\alpha^{2(\alpha-1)}} \int_0^\infty dy\,y^{-\alpha}
[E_{y,B}(f_\alpha(Z))-1].
\end{aligned}
\end{equation} 
Here, we note that the integral converges near $y=0$ because $\alpha<2$ and 
$E_{y,B}(f_\alpha(Z))-1 = O(y)$ as $y \downarrow 0$, and also converges near 
$y=\infty$ because $\alpha>1$ and $E_{y,B}(f_\alpha(Z)) \leq  2^{1/\alpha}
(1+E_{y,B}(Z))\leq 2^{1/\alpha}(1+E_{y,1}(Z))= 2^{1-(1/\alpha)}<\infty$. Next, 
abbreviate
\begin{equation}
\label{slope7}
I_\alpha(B) = \int_0^\infty dy\,y^{-\alpha} [E_{y,B}(f_\alpha(Z))-1].
\end{equation}
If $B=1$, then the integrand is strictly positive, because $z \mapsto f_\alpha(z)$
is strictly convex and $E_{y,1}(Z)=1$ for all $y$, so that $E_{y,1}(f_\alpha(Z))
> f_\alpha(E_{y,1}(Z)) = f_\alpha(1)=1$ by Jensen's inequality. Thus, we have
$I_\alpha(1)>0$. However, $B \mapsto I_\alpha(B)$ is strictly decreasing and 
continuous on $[1,\infty)$, because $z \mapsto f_\alpha(z)$ is strictly increasing 
and continuous on $[0,\infty)$. Hence there exists a $B(\alpha)>1$ such that 
$I_\alpha(B(\alpha))=0$. 

\medskip\noindent
{\bf 3.}
The estimate in Step 2 implies that $\cN(\beta,B\beta/\alpha)>1$ for any $B \in 
(1,B(\alpha))$ and $\beta$ small enough. Since $h\mapsto S_*^\mathrm{que}(\beta,h)$
is non-increasing and $S_*^\mathrm{que}(\beta,h_c^\mathrm{que}(\beta))=0$, it 
therefore follows from \eqref{arg6} that $h_c^\mathrm{que}(\beta) \geq B\beta/\alpha$ 
for any $B \in (1,B(\alpha))$ and $\beta$ small enough, which yields 
\begin{equation}
\label{slope8}
\liminf_{\beta \downarrow 0}  h_c^\mathrm{que}(\beta)/\beta \geq B(\alpha)/\alpha
\end{equation}
andh proves the first half of the lower bound in Corollary~\ref{Kcslope}.

\medskip\noindent
{\bf 4.}
If the disorder is not standard Gaussian, then the \emph{same scaling} as in \eqref{slope6} 
holds because the disorder satisfies the central limit theorem (recall that we have 
assumed that the disorder has zero mean and unit variance). The finiteness of the moment 
generating function assumed in \eqref{mgffin} suffices to justify this claim. If the 
excursion length distribution is modulated by a slowly varying function $L$, as in
\eqref{rhocondalt}, then we can use the fact that $L(y(\alpha/\beta)^2) \sim 
L(1/\beta^2)$ as $\beta \downarrow 0$ uniformly in $y$ on compact subsets of 
$(0,\infty)$ (Bingham, Golide and Teugels~\cite{BiGoTe87}, Theorem 1.2.1, and all 
that changes is that the left-hand side of \eqref{slope6} must be divided by an 
extra factor $L(1/\beta^2)$. We need that $\rho$ is asymptotically periodic in order 
to get the integral over $y$ w.r.t.\ the Lebesgue measure $dy$ (modulo a factor
1 over the period, which comes in front and therefore is irrelevant).

\medskip\noindent
{\bf 5.} We next turn to the case $\alpha \geq 2$. For $y \downarrow 0$,
\begin{equation}
\label{slope9}
e^{-2By-2\sqrt{y}X} - 1 = \sqrt{y}\,(-2X) + y\,(-2B+2X^2) + O(y^{3/2}), 
\end{equation}
while for $z \to 1$,
\begin{equation}
\label{slope10}
f_\alpha(z) = 1 + \tfrac12(z-1) + \tfrac18(\alpha-1)(z-1)^2 + O((z-1)^3).
\end{equation}
Combining these expansions with the observation that $X$ has zero mean and unit 
variance, we find that for $y \downarrow 0$,
\begin{equation}
\label{slope11}
E_{y,B}(f_\alpha(Z)) = 1 + y\,[\tfrac12(1+\alpha)-B] + O(y^{3/2}).
\end{equation}
Since $E_{y,B}(f_\alpha(Z))$ is bounded from above, it follows from \eqref{slope11}
that if $B<\tfrac12(1+\alpha)$ and
\begin{equation}
\label{slope12}
\lim_{\beta \downarrow 0} 
\frac{
\sum_{y \in (\beta/\alpha)^2\N,\,y>\epsilon} \rho(y(\alpha/\beta)^2)}
{\sum_{y \in (\beta/\alpha)^2\N,\,y\leq\epsilon} y\,\rho(y(\alpha/\beta)^2)
} = 0 \qquad \forall\,\epsilon>0,
\end{equation}
then the behavior of the sum in \eqref{slope5} for $\beta \downarrow 0$ is dominated by 
the small values of $y$, i.e.,
\begin{equation}
\label{slope13}
\begin{aligned}
\cN(\beta,B\beta/\alpha)-1 
&\sim [\tfrac12(1+\alpha)-B] \sum_{y\in (\beta/\alpha)^2\N,\,y\leq\epsilon} 
y\,\rho(y(\alpha/\beta)^2),\\
&= [\tfrac12(1+\alpha)-B]\,(\beta/\alpha)^2 \sum_{m\in\N,\,m\leq\epsilon(\alpha/\beta)^2} 
m\rho(m) \qquad \forall\,\epsilon>0.
\end{aligned}
\end{equation}
The condition in \eqref{slope13} is equivalent to
\begin{equation}
\label{slope14}
\lim_{M\to\infty} \frac{M\sum_{m>M} \rho(m)}
{\sum_{1\leq m\leq M} m\rho(m)} = 0.
\end{equation}
Clearly, the condition in \eqref{slope14} is satisfied when $m_\rho=\sum_{m\in\N} 
m\rho(m)<\infty$ (because the numerator tends to zero and the denominator tends to 
$m_\rho$), in which case \eqref{slope13} yields
\begin{equation}
\label{slope15}
\lim_{\beta \downarrow 0} \frac{1}{\beta^2}\,[\cN(\beta,B\beta/\alpha)-1]
=  [\tfrac12(1+\alpha)-B]\,\frac{1}{\alpha^2}\,m_\rho.
\end{equation}
As in Step 3, it therefore follows that $h_c^\mathrm{que}(\beta) \geq B\beta/\alpha$ 
for any $B<\tfrac12(1+\alpha)$ and $\beta$ small enough, which yields 
\begin{equation}
\label{slope16}
\liminf_{\beta \downarrow 0}  h_c^\mathrm{que}(\beta)/\beta \geq 
\tfrac{1+\alpha}{2\alpha}
\end{equation}
and proves the second half of the lower bound in Corollary~\ref{Kcslope} when 
$m_\rho<\infty$. It remains to check \eqref{slope14} when $m_\rho=\infty$ and
$\rho$ is regularly varying at infinity. Since $\alpha\geq 2$, this corresponds 
to the case where $\rho(m)=m^{-2}L(m)$ along the (asymptotically periodic) support 
of $\rho$ with $L$ slowly varying at infinity and not decaying too fast. Now, 
by \cite{BiGoTe87}, Theorem 1.5.10, we have $\sum_{m>M} \rho(m) = \sum_{m>M} 
m^{-2}L(m) \sim M^{-1}L(M)$, and so the numerator of \eqref{slope14} is $\sim L(M)$. 
On the other hand, for every $0<\delta<1$ the denominator is bounded from below 
by $\sum_{\delta M<m\leq M} m\rho(m) = \sum_{\delta M<m\leq M} m^{-1}L(m) \sim 
L(M) \sum_{\delta M<m\leq M} m^{-1} \sim L(M)\log(1/\delta)$, and so \eqref{slope14} 
follows by first letting $M\to\infty$ and then letting $\delta\downarrow 0$.  

\medskip\noindent
{\bf 6.} As in Step 4, the argument in Step 5 extends to arbitrary disorder subject 
to \eqref{mgffin}.
\end{proof}


\section{Proof of Corollaries~\ref{delocpathprop} and \ref{locpathprop}}
\label{S8}

Corollaries~\ref{delocpathprop} and \ref{locpathprop} are proved in Sections~\ref{S8.1}
and \ref{S8.2}, respectively.


\subsection{Proof of Corollary~\ref{delocpathprop}}
\label{S8.1}
 
\begin{proof}
Fix $(\beta,h)\in\mathrm{int}(\cD^\mathrm{que})$. We know that $S^\mathrm{que}(\beta,h;0)<0$ 
(recall Fig.~\ref{fig-varhc}) and $\sum_{n\in\N} \wZ^{\beta,h,\o}_n<\infty$. It follows from 
\eqref{Slim} and \eqref{Sfinalext} that for every $\epsilon>0$ and $\omega$-a.s.\ there exists 
an $N_0=N_0(\o,\epsilon)<\infty$ such that 
\begin{equation}
\label{FNbd}
F_N^{\beta,h,\o}(0) \leq e^{N[S^\mathrm{que}(\beta,h;0)+\epsilon]},
\qquad N\geq N_0.
\end{equation}
For $E$ an arbitrary event, write $\wZ^{\beta,h,\o}_n(E)$ to denote the constrained partition 
restricted to $E$. Estimate, for $M\in\N$ and $\epsilon$ small enough such that $S^\mathrm{que}
(\beta,h;0)+\epsilon<0$,
\begin{equation}
\begin{split}
\wP^{\beta,h,\omega}_n(\cM_n \geq M) 
&=\frac{\wZ^{\beta,h,\o}_n(\cM_n \geq M)}{\wZ^{\beta,h,\o}_n}
\leq \frac{\sum_{n\in\N} \wZ^{\beta,h,\o}_n(\cM_n \geq M)}{\wZ^{\beta,h,\o}_n}\\
&=\frac{1}{\wZ^{\beta,h,\o}_n} \sum_{N \geq M} F_N^{\beta,h,\o}(0)
\leq\frac{2}{\rho(n)} \frac{e^{M[S^{\rm que}(\beta,h;0)+\epsilon]}}
{1-e^{[S^{\rm que}(\beta,h;0)+\epsilon]}},
\end{split}
\end{equation}
where the second equality follows from (\ref{Zniddef}--\ref{FNdef}). The second inequality 
follows from (\ref{FNbd}) and the bound $\wZ^{\beta,h,\o}_n\geq\tfrac12\rho(n)$, the latter 
being immediate from \eqref{queexpartsum} and the fact that every excursion has probability 
$\tfrac12$ of lying below the interface. Since $\rho(n)=n^{-\alpha+o(1)}$, we get the claim 
by choosing $M=\lceil c\log n\rceil$ with $c$ such that $\alpha+c[S^{\rm que}(\beta,h;0)
+\epsilon]<0$, and letting $n\to\infty$ followed by $\epsilon\downarrow 0$.
\end{proof}
 

\subsection{Proof of Corollary~\ref{locpathprop}}
\label{S8.2}

\begin{proof}
Fix $(\beta,h)\in\cL^\mathrm{que}$. We know that $g^\mathrm{que}(\beta,h)>0$ and 
$S^\mathrm{que}(\beta,h;g^\mathrm{que}(\beta,h))=0$. It follows from \eqref{Slim} 
and \eqref{Sfinalext} that for every $\epsilon,\delta>0$ and $\o$-a.s.\ there exist 
$n_0=n_0(\o,\epsilon)<\infty$ and $M_0=M_0(\o,\delta)<\infty$ such that 
\begin{equation}
\label{ZFasymp}
\begin{split}
\wZ^{\beta,h,\o}_n 
&\geq e^{n[g^\mathrm{que}(\beta,h)-\epsilon]},
\qquad n\geq n_0,\\ 
F^{\beta,h,\o}_M(g^\mathrm{que}(\beta,h)+\delta) 
&\leq e^{M[S^\mathrm{que}(\beta,h;g^\mathrm{que}(\beta,h)+\delta)+\delta^2]}, 
\qquad M\geq M_0,\\
F^{\beta,h,\o}_M(g^\mathrm{que}(\beta,h)-\delta) 
&\leq e^{M[S^\mathrm{que}
(\beta,h;g^\mathrm{que}(\beta,h)-\delta)+\delta^2]}, 
\qquad M\geq M_0.
\end{split}
\end{equation}
For every $M_1,M_2\in\N$ with $M_1<M_2$ we have
\begin{equation}
\label{breakup}
\begin{split}
\wP^{\beta,h,\o}_n(M_1<\cM_n<M_2)
&= 1-\left[\wP^{\beta,h,\o}_n(\cM_n\geq M_2)+\wP^{\beta,h,\o}_n(\cM_n\leq M_1)\right].
\end{split}
\end{equation}
Below we show that the probabilities in the right-hand side of (\ref{breakup}) vanish 
as $n\to\infty$ when $M_1=\lceil c_1n\rceil $ with $c_1<C_-$ and $M_2=\lceil c_2n\rceil$ 
with $c_2>C_+$, respectively, where
\begin{equation}
\label{C+-defs}
\begin{aligned}
-\frac{1}{C_-} &= \left(\frac{\partial}{\partial g}\right)^-\,
S^\mathrm{que}\big(\beta,h;g^\mathrm{que}(\beta,h)\big),\\
-\frac{1}{C_+} &= \left(\frac{\partial}{\partial g}\right)^+\,
S^\mathrm{que}\big(\beta,h;g^\mathrm{que}(\beta,h)\big),
\end{aligned}
\end{equation}
are the left-derivative and right-derivative of $g\mapsto S^\mathrm{que}(\beta,h;g)$ at 
$g=g^\mathrm{que}(\beta,h)$, which exist by convexity, are strictly negative (recall 
Fig.~\ref{fig-varfe}) and satisfy $C_-\leq C_+$. Throughout the proof we assume that 
$M_1\geq M_0$. 

\medskip\noindent
{\bf 1.} Put $M_2=\lceil c_2 n\rceil$, and abbreviate
\begin{equation}
\label{abbr}
a(\beta,h,\delta) = S^\mathrm{que}(\beta,h;g^\mathrm{que}(\beta,h)+\delta)+\delta^2,
\end{equation}
where we choose $\delta$ small enough such that $a(\beta,h,\delta)<0$ (recall Fig.~\ref{fig-varhc}). 
Estimate
\begin{equation}
\label{lbest}
\begin{split}
\wP^{\beta,h,\o}_n(\cM_n\geq M_2)
&= \frac{\wZ^{\beta,h,\o}_n\left(\cM_n \geq M_2\right)}{\wZ^{\beta,h,\o}_n}\\
&\leq e^{n[\epsilon+\delta]}\,\wZ^{\beta,h,\o}_n\left(\cM_n \geq M_2\right)
\,e^{-n[g^\mathrm{que}(\beta,h)+\delta]}\\
&\leq e^{n[\epsilon+\delta]} \sum_{n'\in\N}\wZ^{\beta,h,\o}_{n'}(\cM_{n'}\geq M_2)
\,e^{-n'[g^\mathrm{que}(\beta,h)+\delta]}\\
&= e^{n[\epsilon+\delta]} \sum_{N\geq M_2} F_N^{\beta,h,\o}(g^\mathrm{que}(\beta,h)+\delta)\\
&\leq e^{n[\epsilon+\delta]} \sum_{N\geq M_2} 
e^{Na(\beta,h,\delta)}\\
&= \frac{e^{n[\epsilon+\delta+c_2a(\beta,h,\delta)]}}
{1-e^{a(\beta,h,\delta)}}.
\end{split}
\end{equation}
The first inequality follows from the first line in \eqref{ZFasymp}, the second equality 
from (\ref{Zniddef}--\ref{FNdef}), and the third inequality from \eqref{ZFasymp}. The 
claim follows by picking $c_2$ such that 
\begin{equation}
\epsilon+\delta+c_2a(\beta,h,\delta)<0,
\end{equation}
letting $n\to\infty$ followed by $\epsilon\downarrow 0$ and $\delta\downarrow 0$,
and using that 
\begin{equation}
\lim_{\delta \downarrow 0} \frac{1}{\delta}\,a(\beta,h,\delta) 
= \left(\frac{\partial}{\partial g}\right)^+ 
S^\mathrm{que}(\beta,h;g^\mathrm{que}(\beta,h)) = -\frac{1}{C_+} < -\frac{1}{c_2}.
\end{equation}

\medskip\noindent
{\bf 2.} Put $M_1=\lceil c_1 n\rceil$ and abbreviate
\begin{equation}
b(\beta,h,\delta)
=S^\mathrm{que}(\beta,h;g^\mathrm{que}(\beta,h)-\delta)+\delta^2,
\end{equation}
where we choose $\delta$ small enough such that $b(\beta,h,\delta)>0$. Split
\begin{equation}
\wP^{\beta,h,\o}_n(\cM_n\leq M_1) = I + II
\end{equation}
with
\begin{equation}
I=\frac{\wZ^{\beta,h,\o}_n\left(\cM_n < M_0\right)}{\wZ^{\beta,h,\o}_n},
\qquad
II=  \frac{\wZ^{\beta,h,\o}_n\left(M_0 \leq \cM_n \leq M_1\right)}{\wZ^{\beta,h,\o}_n}.
\end{equation}
Since
\begin{equation}
I \leq e^{-n[g^{\rm que}(\beta,h)-\epsilon]}\, \wZ^{\beta,h,\o}_n\left(\cM_n<M_0\right)
= e^{-n[g^{\rm que}(\beta,h)-\epsilon]}\,\sum_{N<M_0} 
F_N^{\beta,h,\o}(g^\mathrm{que}(\beta,h)-\delta),
\end{equation}
this term is harmless as $n\to\infty$ (recal \eqref{Fub}). Repeat the arguments leading 
to \eqref{lbest}, to estimate
\begin{equation}
\begin{split}
II &\leq
e^{n[\epsilon-\delta]} \sum_{n'\in\N} \wZ^{\beta,h,\o}_{n'}\left(M_0\leq \cM_{n'} \leq M_1\right)
\,e^{-n'[g^\mathrm{que}(\beta,h)-\delta]}\\
&= e^{n[\epsilon-\delta]} \sum_{M_0 \leq N \leq M_1} 
F_N^{\beta,h,\o}(g^\mathrm{que}(\beta,h)-\delta)\\
&\leq e^{n[\epsilon-\delta]} \sum_{M_0 \leq N \leq M_1} e^{Nb(\beta,h,\delta)}\\
&\leq e^{n[\epsilon-\delta+c_1b(\beta,h,\delta)]} 
\sum_{N\leq M_1} e^{[N-M_1]b(\beta,h,\delta)}\\
&\leq \frac{e^{n\left[\epsilon-\delta+c_1b(\beta,h,\delta)\right]}}
{1-e^{-b(\beta,h,\delta)}}.
\end{split}
\end{equation}
Therefore the assertion follows by choosing $c_1$ such that 
\begin{equation}
\epsilon-\delta+c_1b(\beta,h,\delta)<0,
\end{equation}
letting $n\to\infty$ followed by $\epsilon\downarrow 0$ and $\delta\downarrow 0$, and using that
\begin{equation}
\lim_{\delta \downarrow 0} \frac{1}{\delta}\,b(\beta,h,\delta) 
= -\left(\frac{\partial}{\partial g}\right)^- S^\mathrm{que}(\beta,h;g^\mathrm{que}(\beta,h))
= \frac{1}{C_-} < \frac{1}{c_1}.
\end{equation}

\medskip
Recalling \eqref{breakup}, we have now proved that
\begin{equation}
\lim_{n\to\infty} \wP^{\beta,h,\o}_n(\lceil c_1n\rceil <\cM_n<\lceil c_2n\rceil) = 1
\qquad \forall\,c_1<C_-,\,c_2>C_+. 
\end{equation}
Finally, if \eqref{densid} holds, then $C_-=C_+$, and we get the law of large numbers in
\eqref{LLNpath}.  
\end{proof}


\appendix


\section{Control of $\Phi_{\beta,h}$}
\label{appA}

In  Appendix~\ref{appA.1} we prove the bound in \eqref{Fub} (Lemma~\ref{mainlemma} below).
In Appendix~\ref{appA.2}  we prove that $h(\pi_1Q|q_{\rho,\nu})<\infty$ implies that 
$\Phi_{\beta,h}(Q)<\infty$ for all $\beta,h>0$ (Lemma~\ref{finitephi} below).  In both
proofs we make use of a concentration of measure estimate for the disorder $\omega$ whose 
proof is given in Appendix~\ref{appD}. 


\subsection{Proof of $\omega$-a.s.\ boundedness of 
$-gm_{R_N^\omega}+\Phi_{\beta,h}(R_N^\omega)$ for $g>0$}
\label{appA.1}

Recall the definition of $R_N^\omega$ in (\ref{empprocomega}).

\begin{lemma}
\label{mainlemma}
Fix $\beta,h,g>0$. Then $\omega$-a.s.\ there exists a $K(\omega,g)<\infty$ such 
that, for all $N\in\N$ and for all sequences $0=k_0 < k_1 < \cdots < k_N < \infty$,
\begin{equation}
\label{sumbd}
-g k_N + \sum_{i=1}^N \log\psi^\omega_{\beta,h}\big((k_{i-1},k_i]\big)
\leq K(\omega,g)N.
\end{equation}
\end{lemma}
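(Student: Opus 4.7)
\medskip\noindent
\emph{Plan of proof.} The plan is a two-step reduction followed by a weighted-Markov union bound. First, applying the elementary inequality $\log\tfrac12(1+e^{-x})\leq\max(0,-x)$ (valid for all $x\in\R$, since $\tfrac12(1+e^{-x})\leq\max(1,e^{-x})$) to $x=2\beta(hL_i+\sigma_{I_i})$, where $L_i=k_i-k_{i-1}$ and $\sigma_{I_i}=\sum_{k\in I_i}\omega_k$, gives $\log\psi^\omega_{\beta,h}(I_i)\leq 2\beta[-hL_i-\sigma_{I_i}]^+$. Absorbing $-gL_i$ via $c:=h+g/(2\beta)>0$ yields
$$-gL_i+\log\psi^\omega_{\beta,h}(I_i)\leq 2\beta\,Y_i,\qquad Y_i:=[-cL_i-\sigma_{I_i}]^+,$$
and the lemma reduces to producing $K(\omega,g)<\infty$ with $\sum_{i=1}^N Y_i\leq K(\omega,g)\,N$ uniformly over admissible partitions.

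Next, by \eqref{nuzmuv}--\eqref{mgffin} we have $M(0)=M'(0)=0$, so we can choose $\lambda_0>0$ small enough that $c_*:=\lambda_0 c-M(\lambda_0)>0$. A one-line Chernoff computation then gives the uniform single-interval bound $\E[e^{\lambda_0 Y_{a,\ell}}]\leq 1+e^{-c_*\ell}$ for every $a\geq 0$ and $\ell\geq 1$. Since disjoint intervals yield independent $Y_i$, the weighted exponential moment factorises: for any partition with lengths $(L_1,\ldots,L_N)$ and any $\epsilon>0$,
$$\E\!\left[e^{\lambda_0(\sum_i Y_i-\epsilon k_N)}\right]\leq\prod_{i=1}^N e^{-\lambda_0\epsilon L_i}(1+e^{-c_*L_i}),$$
and summing over all length sequences in $\N^N$ gives $A(\epsilon)^N$ with $A(\epsilon):=\sum_{\ell\geq 1}(1+e^{-c_*\ell})e^{-\lambda_0\epsilon\ell}<\infty$ for every $\epsilon>0$. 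A union bound plus Markov then yields
$$\P\!\left(\exists\text{ partition of length $N$ with }\sum_{i=1}^N Y_i>KN+\epsilon k_N\right)\leq e^{N(\log A(\epsilon)-\lambda_0 K)}.$$
Choosing $\epsilon=g/(4\beta)$ (so $2\beta\epsilon=g/2<g$) and $K$ with $\lambda_0 K>\log A(\epsilon)+1$, these probabilities are summable in $N$; by Borel--Cantelli the bound $\sum_i Y_i\leq KN+\epsilon k_N$ holds $\omega$-a.s.\ for every partition and all but finitely many $N$, the exceptional small $N$ being absorbed into $K(\omega,g)$ since $\sup_{\text{partition of length $N$}}(\sum_i Y_i-\epsilon k_N)$ is $\omega$-a.s.\ finite for each fixed $N$. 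Substituting back gives
$$-gk_N+\sum_{i=1}^N\log\psi^\omega_{\beta,h}(I_i)\leq(2\beta\epsilon-g)k_N+2\beta KN\leq 2\beta KN,$$
which is the claim.

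The main obstacle is that admissible partitions form an infinite set (the endpoint $k_N$ is not bounded a priori), and the naive union bound over length sequences diverges because $\sum_{\ell\geq 1}(1+e^{-c_*\ell})=+\infty$. The essential trick, which crucially relies on $g>0$, is the insertion of the length-dependent penalty $e^{-\lambda_0\epsilon k_N}$ inside the Markov step: it converts the divergent combinatorial sum into the convergent geometric-type series $A(\epsilon)^N$, and the cost $\epsilon k_N$ in the resulting bound is then harmlessly absorbed by the negative $-gk_N$ contribution present in the original quantity. This weighted Chernoff scheme is precisely what the concentration estimate of Appendix~\ref{appD} packages.

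\medskip
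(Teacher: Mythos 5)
Your proof is correct in its core mechanism but follows a genuinely different route from the paper's. The paper classifies the excursion intervals by the integer level $m$ of $-2\beta\sum_{k\in I}(\omega_k+h)$, uses the concentration estimate of Appendix~\ref{appD} plus Borel--Cantelli to show that $\omega$-a.s.\ intervals of level $m>M(\omega)$ cannot occur too early (the $j$-th disjoint occurrence needs time at least $jm^4$), so that their total contribution is beaten deterministically by $-gk_N$, while levels $m\leq M(\omega)$ contribute at most $N[M(\omega)+1]$. You instead run an annealed Chernoff bound per interval, insert the exponential penalty $e^{-\lambda_0\epsilon k_N}$ so that the union bound over all length sequences converges to $A(\epsilon)^N$, and then let $-gk_N$ absorb the $\epsilon k_N$ slack. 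Both arguments use \eqref{mgffin} (you directly through $M(\lambda_0)$, the paper through Lemma~\ref{disordertail}) and both hinge on $g>0$; your version is arguably more streamlined and yields a deterministic constant for all $N\geq N_0(\omega)$ in one stroke.

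Two small repairs are needed. First, you use $g$ twice: if you absorb all of $-gL_i$ into $c=h+g/(2\beta)$, the reduction already reads $-gk_N+\sum_i\log\psi^\omega_{\beta,h}(I_i)\leq 2\beta\sum_i Y_i$, and your final display $(2\beta\epsilon-g)k_N+2\beta KN$ is then unavailable because the $-gk_N$ has been spent; conversely, if you keep $-gk_N$ for the last step, $Y_i$ must be defined with $c=h$ (or split $g$, e.g.\ $c=h+g/(4\beta)$ and $2\beta\epsilon\leq g/2$). Any consistent choice works, since all the Chernoff step needs is $c>0$ so that $\lambda_0c-M(\lambda_0)>0$ for small $\lambda_0$. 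Second, absorbing the finitely many exceptional $N<N_0(\omega)$ requires the a.s.\ finiteness of $\sup\big(\sum_iY_i-\epsilon k_N\big)$ over partitions of fixed length $N$, which you assert without proof; it does hold, e.g.\ because $Y_i\leq[S_{k_{i-1}}-S_{k_i}]^+\leq 2\max_{m\leq k_N}|S_m|$ with $S_m=\sum_{k\leq m}\omega_k$, and by the strong law of large numbers, a.s.\ for every $\delta>0$ there is $M(\omega,\delta)<\infty$ with $|S_m|\leq\delta m+M(\omega,\delta)$ for all $m$, so taking $\delta=\epsilon/(2N)$ gives $\sum_iY_i-\epsilon k_N\leq 2N\,M(\omega,\epsilon/(2N))<\infty$. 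With these two adjustments the argument is complete.
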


\begin{proof}
The proof comes in 3 steps. Throughout the proof $\beta,h,g>0$ are fixed.

\medskip\noindent
{\bf 1.} For $l\in\N$ and $m\in\Z$, define
\begin{equation}
\begin{aligned}
J^\omega(l,m) &= \left\{I\subset\N\colon\,|I| = l,
\,m \leq -2\beta\sum_{k \in I}(\omega_k+h) < m+1\right\},\\
J^\omega(m) &= \bigcup_{l\in\N} J^\omega(l,m),
\end{aligned}
\end{equation}
and
\begin{equation}
\label{Tdef}
\begin{aligned}
&T^\omega_0(l,m)=0,\\ 
&T^\omega_j(l,m) 
= \inf\big\{n \geq T^\omega_{j-1}(l,m)+l\,1_{\{j>1\}}\colon\,
(n,n+l]\cap\N \in J^\omega(l,m)\big\},
\quad j \in \N,
\end{aligned}
\end{equation}
and
\begin{equation}
\label{Adef}
A(m) = \left\{\omega\colon\,T^\omega_j(l,m) \leq jm^4 \mbox{ for some } 
j,l\in\N\right\}.
\end{equation}
Below we will show that
\begin{equation}
\label{sumfin}
\sum_{m\in\N_0} \P(A(m)) < \infty.
\end{equation}
By Borel-Cantelli, this implies that $\omega$-a.s.\ there exists an $M(\omega)<\infty$
such that $\omega \notin A(m)$ for all $m > M(\omega)$. 

\medskip\noindent
{\bf 2.} Abbreviate $I_i=(k_i,k_{i+1}]$ and split
\begin{equation}
\label{ineq1} 
\sum_{i=1}^N \log \psi^\omega_{\beta,h}({I_i}) \leq I + II
\end{equation}
with
\begin{equation}
I = \sum_{i=1}^N 1_{B_i}(\omega)\,\log \psi^\omega_{\beta,h}(I_i),
\qquad 
II = \sum_{i=1}^N 1_{C_i}(\omega)\,\log \psi^\omega_{\beta,h}(I_i),
\end{equation}
where 
\begin{equation}
\begin{aligned}
B_i &=\{\omega\colon\,I_i\in J^\omega(m) \mbox{ for some } 0 \leq m \leq M(\omega)\},\\
C_i &=\{\omega\colon\,I_i\in J^\omega(m) \mbox{ for some } m > M(\omega)\}.
\end{aligned}
\end{equation}
The inequality in (\ref{ineq1}) comes from dropping the contribution of the $I_i$'s 
in $\cup_{m\in\Z\backslash\N_0} J(m)$ (for which $\log\psi^\omega_{\beta,h}(I_i) 
\leq 0 \wedge [-2\beta\sum_{k \in I_i}(\omega_k+h)] = 0$) and retaining only the 
$I_i$'s in $\cup_{m\in\N_0} J^\omega(m)$. Clearly, we have
\begin{equation}
\label{ineq2}
I \leq N[M(\omega)+1].
\end{equation}
To bound $II$, define
\begin{equation}
\label{Lambdadef}
I^\omega_N(m) = \{1 \leq i \leq N\colon\,I_i \in J^\omega(m)\}.
\end{equation}
Then
\begin{equation}
\label{IIest}
II = \sum_{m>M(\omega)} \sum_{i\in I^\omega_N(m)} \log \psi^\omega_{\beta,h}(I_i)
\leq \sum_{m>M(\omega)} |I^\omega_N(m)|\,(m+1). 
\end{equation}
It follows from (\ref{Tdef}--\ref{Adef}) and (\ref{Lambdadef}) that if $I^\omega_N(m)
\neq\emptyset$ and $m>M(\omega)$, then $k_N\geq|I^\omega_N(m)|\,m^4$. Hence
\begin{equation}
\label{ineq3}
\begin{aligned}
k_N 
&\geq \max_{m>M(\omega)} |I^\omega_N(m)|\,m^4\\
&\geq \sum_{m>M(\omega)}|I^\omega_N(m)|\,m^4\,
\left(\frac{m^{-2}}{\sum_{\bar{m}>M(\omega)} \bar{m}^{-2}}\right)\\
&\geq \sum_{m>M(\omega)}  |I^\omega_N(m)|\,Cm^2
\end{aligned}
\end{equation}
with $C=1/\sum_{\bar{m}\in\N} \bar{m}^{-2}>0$. Combining (\ref{IIest}--\ref{ineq3}), we get
\begin{equation} 
\label{ineq4}
-gk_N + II \leq \sum_{m>M(\omega)} |I^\omega_N(m)|\,[-gCm^2+(m+1)]. 
\end{equation}
Since $g>0$, we have $\max_{m\in\N} [-gCm^2+ (m+1)] = C(g) \leq 1+(1/4gC) < \infty$. 
Since $\sum_{m\in\Z} |I^\omega_N(m)|$ $= N$, we can combine (\ref{ineq1}), (\ref{ineq2}) 
and (\ref{ineq4}) to get the claim in (\ref{sumbd}) with $K(\omega,g)=M(\omega)+1+C(g)$.

\medskip\noindent
{\bf 3.} It remains to prove (\ref{sumfin}). Estimate
\begin{equation}
\label{Asum}
\P(A(m)) \leq \sum_{l\in\N} \sum_{j\in\N} 
\P\big(T^\omega_j(l,m) \leq jm^4\big)
\end{equation}
and
\begin{equation}
\label{Test1}
\begin{aligned}
\P\big(T^\omega_j(l,m) \leq jm^4\big)
&\leq \binom{jm^4}{j} \left[\P\big((0,l] \in J^\omega(l,m)\big)\right]^j\\
&\leq \left[em^4\,\P\left(\sum_{k=1}^{l} \omega_k 
\leq -\left[lh+\frac{m}{2\beta}\right]\right)\right]^j.
\end{aligned}
\end{equation}
By our concentration of measure estimate in Lemma~\ref{disordertail}, with $n=l$, 
$A=\frac{m}{2\beta}$ and $B=h$, there exists a $C>0$ (depending on $\beta,h$; see 
(\ref{mnbound}--\ref{Cdef})) such that 
\begin{equation}
\label{conmes}
\P\left(\sum_{k=1}^{l} \omega_k \leq -\left[lh+\frac{m}{2\beta}\right]\right)
\leq e^{-C(l+m)}.
\end{equation}
Combine (\ref{Asum}--\ref{conmes}), and pick $m_0$ large enough so that $em^4e^{-Cm/2} 
\leq 1$ for all $m>m_0$, to get
\begin{equation}
\sum_{m>m_0} \P(A(m)) \leq \sum_{m>m_0} \sum_{l\in\N} \sum_{j\in\N} 
e^{-C(l+m)j/2} < \infty.
\end{equation}
This proves the claim in (\ref{sumfin}).
\end{proof}


\subsection{Condition for finiteness of $\Phi_{\beta,h}(Q)$}
\label{appA.2}

\begin{lemma}
\label{finitephi} 
Fix $\beta,h>0$, $\rho\in\cP(\N)$ and $\nu\in\cP(\R)$. Then, for all $Q\in\cP^\mathrm{inv}
(\widetilde{\R}^\N)$ with $h(\pi_1 Q|q_{\rho,\nu})$ $<\infty$, there are finite constants 
$C>0$, $\gamma>\frac{2\beta}{C}$ and $K=K(\beta,h,\rho,\nu,\gamma)$ such that 
\begin{equation}
\Phi_{\beta,h}(Q)\leq \gamma\; h(\pi_1 Q|q_{\rho,\nu})+K.
\end{equation}
\end{lemma}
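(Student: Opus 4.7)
Write $q = \pi_1 Q \in \cP(\widetilde E)$. The natural starting point is the Donsker--Varadhan entropy inequality at scale $\gamma>0$,
\[
\int f\, dq \;\le\; \gamma\, h(q\mid q_{\rho,\nu}) + \gamma\log \int e^{f/\gamma}\, dq_{\rho,\nu},
\]
applied with $f = \log \phi_{\beta,h}$. This immediately produces an estimate of the required form, provided we can choose $\gamma$ so that the moment integral $\int \phi_{\beta,h}^{1/\gamma}\, dq_{\rho,\nu}$ is finite; we will then set $K = \gamma \log \int \phi_{\beta,h}^{1/\gamma}\, dq_{\rho,\nu}$, which depends only on $\beta,h,\rho,\nu,\gamma$ and not on $Q$. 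Observe that the $1/\gamma$-scaling is precisely what yields the coefficient $\gamma$ in front of the relative entropy, matching the statement.

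For $\gamma\ge 1$, the subadditivity $(1+z)^{1/\gamma} \le 1 + z^{1/\gamma}$ on $[0,\infty)$ gives
\[
\phi_{\beta,h}(y)^{1/\gamma} \;\le\; 2^{-1/\gamma}\bigl(1 + e^{-(2\beta h/\gamma)\,\tau(y) - (2\beta/\gamma)\,\sigma(y)}\bigr),
\]
and integrating against $q_{\rho,\nu}$, using its product structure together with $\int e^{-\lambda x}\,\nu(dx) = e^{M(\lambda)}$, yields
\[
\int \phi_{\beta,h}^{1/\gamma}\, dq_{\rho,\nu} \;\le\; 2^{-1/\gamma}\Bigl[\,1 + \sum_{m\in\N} \rho(m)\, e^{m\,[\,M(2\beta/\gamma)\,-\,2\beta h/\gamma\,]}\,\Bigr].
\]
Since $\rho$ has only polynomial decay by \eqref{rhocond}, the sum converges exactly when $M(2\beta/\gamma) < 2\beta h/\gamma$, i.e., when $h>M(2\beta/\gamma)/(2\beta/\gamma)$.

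To meet this condition we exploit the behaviour of $M$ at the origin. From \eqref{nuzmuv} and \eqref{mgffin} one has $M(\lambda) = \tfrac12\lambda^2 + O(\lambda^3)$ as $\lambda\downarrow 0$, so $M(\lambda)/\lambda\to 0$. Hence there exists $C = C(h,\nu)>0$ with $M(\lambda) < h\lambda$ for all $0 < \lambda < C$, and, shrinking $C$ if necessary, we may also arrange that $2\beta/C\ge 1$. Any $\gamma > 2\beta/C$ then satisfies simultaneously $\gamma \ge 1$ (so the subadditivity step is valid) and $2\beta/\gamma < C$ (so the displayed sum converges). The entropy inequality then gives
\[
\Phi_{\beta,h}(Q) = \int \log\phi_{\beta,h}\, d(\pi_1 Q) \;\le\; \gamma\, h(\pi_1 Q\mid q_{\rho,\nu}) + K,
\]
which is the claim. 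The only subtle point is this two-sided fit on $\gamma$; the quadratic vanishing of $M$ at $0$ provided by the zero-mean hypothesis in \eqref{nuzmuv} is precisely what lets us handle the full parameter range $\beta,h>0$ without assuming $h$ above the annealed critical curve. Note also that the concentration estimate of Appendix~\ref{appD} is not needed here: only the smoothness and value of $M$ near $\lambda=0$ is used.
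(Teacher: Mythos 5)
Your proof is correct, but it takes a genuinely different route from the paper's. The paper decomposes $\widetilde{E}$ into level sets of the density $d(\pi_1Q)/dq_{\rho,\nu}$ and of the exponent $u(y)=-2\beta[h\tau(y)+\sigma(y)]$, splits $\Phi_{\beta,h}(Q)$ into the three terms $I,II,III$, and controls $I$ and $III$ with the quantified Cram\'er-type concentration bound of Appendix~\ref{appD} (Lemma~\ref{disordertail}), which is where its constant $C$ in \eqref{Cdef} comes from; the same appendix is shared with the proof of Lemma~\ref{mainlemma}, which is presumably why the authors argue this way. You instead use the convex-duality (Donsker--Varadhan) form of relative entropy at scale $\gamma$ together with a direct computation of the fractional exponential moment $\int \phi_{\beta,h}^{1/\gamma}\,dq_{\rho,\nu}$, which by the product structure of $q_{\rho,\nu}$ reduces to the condition $M(2\beta/\gamma)\le 2\beta h/\gamma$; this is then met for all large $\gamma$ because $M(\lambda)=o(\lambda)$ as $\lambda\downarrow 0$ (zero-mean disorder), which is the same structural fact that makes the paper's rate constant $F(H(B))$ positive. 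Your argument is shorter, yields an explicit $K=\gamma\log\int\phi_{\beta,h}^{1/\gamma}\,dq_{\rho,\nu}$ independent of $Q$, and avoids Appendix~\ref{appD} entirely for this lemma, which fully suffices for how the lemma is invoked (finiteness of $\Phi_{\beta,h}$ on $\cC^{\mathrm{fin}}$ via $h(\pi_1Q\mid q_{\rho,\nu})\le I^{\mathrm{ann}}(Q)$). Two small points worth making explicit: the entropy inequality for the unbounded integrand $\log\phi_{\beta,h}$ should be justified by truncation, which is routine here since $\log\phi_{\beta,h}\ge\log\tfrac12$ and the exponential moment is finite; and "converges exactly when" should be "converges when", since at equality the sum is still finite — only sufficiency is needed.
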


\begin{proof}
Abbreviate 
\begin{equation}
f(y) = \frac{d(\pi_1Q)}{dq_{\rho,\nu}}(y), 
\quad u(y) = -2\beta[\tau(y)h+\sigma(y)], 
\qquad y\in\widetilde{\R}=\cup_{n\in\N} \R^n.
\end{equation}
Fix $\gamma>2\beta/C$, with $C>0$ as in \eqref{Cdef}, 
and for $n, m\in\N$  define
\begin{equation} 
\begin{aligned}
A_{m,n} &= \{y\in\R^n\colon\, m-1\leq \g\log f(y)<m\},\\
A_{0,n} &= \{y\in\R^n\colon\, 0\leq f(y)<1\},\\
B_{m,n} &= \{y\in\R^n\colon\, m-1\leq u(y)<m\}.
\end{aligned}
\end{equation} 
Note that  
\begin{equation}
\R^n = A_{0,n} \cup \left[\cup_{m\in\N} A_{m,n}\right], \quad n\in\N,
\end{equation}
and that
\begin{equation} 
B_n = \bigcup_{m\in\N}\,B_{m,n}, \quad n\in\N,
\end{equation}
is the set of points $y\in\R^n$ for which $u(y)\geq0$. This gives rise to the decomposition
\begin{equation}
\begin{aligned}
\Phi_{\beta,h}(Q)
&= \sum_{n\in\N} \int_{\R^n} \log\left(\tfrac12\left[1+e^{u(y)}\right]\right)(\pi_1Q)(dy)\\
&\leq \sum_{n\in\N} \int_{\R^n} \log\left(1 \vee e^{u(y)}\right)(\pi_1Q)(dy)\\ 
&= \sum_{n\in\N}\sum_{m\in\N} \int_{B_{m,n}} u(y)f(y)\,q_{\rho,\nu}(dy)\\
&= I + II + III 
\end{aligned}
\end{equation}
with 
\begin{equation}\label{I-III}
\begin{aligned}
I &= \sum_{n\in\N}\sum_{m\in\N} \int_{ [\cup_{l\in\N_0} B_{m+l,n}] \cap A_{m,n} } 
u(y)f(y)\,q_{\rho,\nu}(dy)\\
II &= \sum_{n\in\N} \sum_{m\in\N} \int_{ A_{m,n} \cap [\cup_{l=1}^{m-1}B_{l,n}] } 
u(y)f(y)\,q_{\rho,\nu}(dy),\\
III &= \sum_{n\in\N} \int_{ A_{0,n} \cap [\cup_{m\in\N}\;B_{m,n}] } 
u(y)\,f(y)\,q_{\rho,\nu}(dy).
\end{aligned}
\end{equation}
The terms $I$ and $II$ deal with the set $B_n\cap \bigcup_{m\in\N}A_{m,n}$, while $III$ 
deals with the set $B_n\cap A_{0,n}$. Note that
\begin{equation}
\label{Iubs}
\begin{aligned}
I &\leq \sum_{n\in\N}\rho(n) \sum_{m\in\N} 
e^{m/\g}\sum_{l\in\N_0} (m+l)\,\Pr( B_{m+l,n}),\\
III &\leq \sum_{n\in\N} \rho(n) \sum_{m\in\N} m\,\Pr( B_{m,n}),
\end{aligned}
\end{equation}
where we recall that $\Pr=\nu^{\otimes\N}$. The upper bound on $I$ uses that $f\leq e^{m\,\g}$ 
on $A_{m,n}$ and $u<m$ on $B_{m,n}$. The upper bound on $III$ uses that $f\leq 1$ on $A_{0,n}$ 
and $u<m$ on $B_{m,n}$. We need to show that each of the three terms is finite. Observe from 
\eqref{Iubs} that $III \leq I$. Hence it suffices to show that $I$ and $II$ are finite.   

\medskip\noindent
$I$: Estimate
\begin{equation}
\begin{split}
I &\leq \sum_{n\in\N} \rho(n) \sum_{m\in\N} e^{m/\g} \sum_{l\in\N_0} (m+l)\,\Pr(B_{m+l,n})\cr
&\leq  \sum_{n\in\N} \rho(n) \sum_{m\in\N} e^{m/\g}\sum_{l\in\N_0} (l+m)
\,\Pr\left(\sum_{k=1}^n\o_k\leq-\left[nh+\frac{l+m-1}{2\beta}\right] \right)\cr
&\leq \sum_{n\in\N} \rho(n)\,e^{-Cn}  \sum_{m\in\N} e^{m/\g} 
\sum_{l\in\N_0} (l+m)\,\exp\left[-\frac{C(l+m-1)}{ 2\beta}\right] <\infty,
\end{split}
\end{equation}
where the third inequality follows from Lemma \ref{disordertail}, with $A=\frac{l+m-1}{2\beta}$,
$B=h$ and $C>0$ (depending on $\beta,h$; see (\ref{mnbound}--\ref{Cdef})).

\medskip\noindent
$II$: Use that $u(y)<m-1\leq \g\log f(y)$ for $y \in A_{m,n} \cap [\cup_{l=1}^{m-1} B_{l,n}]$, 
to estimate
\begin{equation}
\begin{split}
II &\leq \g \sum_{n\in\N} \sum_{m\in\N} \int_{ A_{m,n} \cap [\cup_{l=1}^{m-1} B_{l,n}] } 
f(y)\log f(y)\,q_{\rho,\nu}(dy)\cr
&\leq \g \sum_{n\in\N} \sum_{m\in\N} \int_{A_{m,n}} 
f(y)\log f(y)\,q_{\rho,\nu}(dy)\cr
&= \g \sum_{n\in\N} \int_{\R^n\backslash A_{0,n}}
f(y)\log f(y)\,q_{\rho,\nu}(dy) < \infty.
\end{split}
\end{equation}
The finiteness of the last term stems from the fact that
\begin{equation}
\label{hsum}
h(\pi_1Q|q_{\rho,\nu})= \sum_{n\in\N} \int_{\R^n\backslash A_{0,n}}
f(y)\log f(y)\,q_{\rho,\nu}(dy)+\sum_{n\in\N} \int_{ A_{0,n}}
f(y)\log f(y)\,q_{\rho,\nu}(dy)
\end{equation}
is assumed to be finite, while the second term in the right-hand side of \eqref{hsum}
lies in $[-1/e,0]$.
\end{proof}


\section{Application of Varadhan's lemma}
\label{appB}

This appendix settles \eqref{Sfinalpre} for $\beta,h>0$ and $g> 0$.

\begin{lemma}
\label{varlem}
For all $\beta,h>0$ and $g>0$, 
\begin{equation}
\label{vareq}
\bar S^\mathrm{que}(\beta,h;g)
=\sup_{Q\in\cC^{\rm fin}\cap\cR} \left[\Phi_{\beta,h}(Q)-gm_Q-I^\mathrm{ann}(Q)\right],
\end{equation}
where $\bar S^\mathrm{que}(\beta,h;g)$ is the $\omega$-a.s.\ constant limit defined in 
\eqref{Slim}.
\end{lemma}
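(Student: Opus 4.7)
The plan is to apply Varadhan's lemma to the representation
$$\frac{1}{N}\log F_N^{\beta,h,\omega}(g)=\log\cN(g)+\frac{1}{N}\log E_g^{*}\bigl(\exp[N\Phi_{\beta,h}(R_N^{\omega})]\bigr),$$
coming from \eqref{FNexpr}, using the quenched LDP of Theorem~\ref{qLDP} for the law of $R_N^{\omega}$ under $P_g^{*}$, which for $g>0$ has rate function $I_g^{\mathrm{que}}=I_g^{\mathrm{ann}}$ on $\cR$ and $+\infty$ off $\cR$. Using Lemma~\ref{rhozre} to rewrite $I_g^{\mathrm{ann}}(Q)=I^{\mathrm{ann}}(Q)+\log\cN(g)+gm_Q$, the constant $\log\cN(g)$ cancels and the $-gm_Q$ penalty appears, so a formal Varadhan computation produces exactly \eqref{vareq}. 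The restriction to $\cC^{\mathrm{fin}}\cap\cR$ is automatic: $I^{\mathrm{ann}}(Q)<\infty$ forces $Q\in\cC$, and $g>0$ together with the penalty $-gm_Q$ forces $m_Q<\infty$ at any candidate optimiser. The obstruction to simply quoting Varadhan's lemma is that $Q\mapsto\Phi_{\beta,h}(Q)$ and $Q\mapsto m_Q$ are neither bounded nor weakly continuous on $\cP^{\mathrm{inv}}(\widetilde{E}^{\N})$: although $\phi_{\beta,h}\ge\tfrac12$ gives $\log\phi_{\beta,h}\ge-\log 2$, the integrand diverges as $\sigma(y)\to-\infty$, and $m_Q$ is merely lower semi-continuous. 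The proof therefore proceeds by a two-sided approximation.

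For the lower bound, I would fix $Q\in\cC^{\mathrm{fin}}\cap\cR$ and approximate it by double truncations $Q_{\mathrm{tr},L}$ obtained from $[Q]_{\mathrm{tr}}$ by further capping letter magnitudes at level $L$. On each $Q_{\mathrm{tr},L}$ both $\Phi_{\beta,h}$ and $m_Q$ are bounded continuous functions of finitely many projections, and the LDP lower bound applied to small weak neighborhoods of $Q_{\mathrm{tr},L}$ yields
$$\liminf_{N\to\infty}\tfrac{1}{N}\log F_N^{\beta,h,\omega}(g)\ge\Phi_{\beta,h}(Q_{\mathrm{tr},L})-gm_{Q_{\mathrm{tr},L}}-I^{\mathrm{ann}}(Q_{\mathrm{tr},L}).$$
Sending $L\to\infty$ and then $\mathrm{tr}\to\infty$ recovers the bound with $Q$: for $I^{\mathrm{ann}}$ via \eqref{truncapproxcont}, for $\Phi_{\beta,h}$ via Lemma~\ref{finitephi} combined with dominated convergence, and for $m_Q$ via monotone convergence.

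The hard part will be the upper bound, where the essential input is Lemma~\ref{mainlemma}: $\omega$-a.s.\ there is a deterministic $K(\omega,g)<\infty$ with $\Phi_{\beta,h}(R_N^{\omega})\le K(\omega,g)+gm_{R_N^{\omega}}$ uniformly in $N$ and in realisations of the empirical process. My plan is to truncate at the letter level by setting $\phi^M=\phi_{\beta,h}\wedge e^M$, so that $\Phi_{\beta,h}^M(Q):=\int\log\phi^M\,d\widetilde\pi_1 Q$ is bounded continuous, and to decompose
$$\Phi_{\beta,h}(R_N^{\omega})\le\Phi_{\beta,h}^M(R_N^{\omega})+\tfrac{1}{N}\sum_{i=1}^{N}[\log\phi_{\beta,h}(Y^{(i)})-M]^{+}.$$
For the first term, the standard Varadhan upper bound applied to the bounded continuous functional $\Phi_{\beta,h}^M$ combined with Lemma~\ref{rhozre} gives the $M$-truncated supremum $\sup_{Q\in\cR}[\Phi_{\beta,h}^M(Q)-gm_Q-I^{\mathrm{ann}}(Q)]$. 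For the tail second term, the key observation is that the density $dP_g^{*}/dP^{*}$ on the first $N$ words equals $\cN(g)^{-N}\exp[-Ngm_{R_N^{\omega}}]$, so Lemma~\ref{mainlemma} lets one absorb the residual exponential into the tilt and reduce matters to a $P^{*}$-tail probability of $\{\Phi_{\beta,h}(R_N^{\omega})>M\}$; this decays exponentially in $NM$ by a Cram\'er-type bound using the finite moment generating function \eqref{mgffin}, choosing the exponential parameter small enough to guarantee integrability. Sending $M\to\infty$ and invoking monotone convergence in the supremum then yields \eqref{vareq}.
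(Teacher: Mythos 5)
Your overall skeleton (quenched LDP of Theorem~\ref{qLDP} plus Lemma~\ref{rhozre}, truncation of $\Phi_{\beta,h}$ at level $M$, Varadhan for the truncated functional, control of the excess via Lemma~\ref{mainlemma} and the density of $P^\ast_g$ w.r.t.\ $P^\ast$) is the same as in Appendix~\ref{appB}, but both halves of your argument have genuine gaps. For the lower bound, applying the LDP lower bound to small neighborhoods of the doubly truncated measures $Q_{\mathrm{tr},L}$ and reading off the rate $I^{\mathrm{ann}}(Q_{\mathrm{tr},L})$ is not justified: capping letter values (and, in general, truncating word lengths) changes the letter statistics of the concatenation, so $Q_{\mathrm{tr},L}\notin\cR$ even when $Q\in\cR$, and for $g>0$ the quenched rate function is $+\infty$ off $\cR$. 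The infimum of $I_g^{\mathrm{que}}$ over a small neighborhood of $Q_{\mathrm{tr},L}$ is then not controlled by $I^{\mathrm{ann}}(Q_{\mathrm{tr},L})+\log\cN(g)+gm_{Q_{\mathrm{tr},L}}$; making that identification is essentially the content of Proposition~\ref{prop:connvarprinc}, which is proved only for finite alphabets and is not available here. The truncation is also unnecessary: since $\log\phi_{\beta,h}\geq-\log 2$ and is continuous, $\Phi_{\beta,h}$ is lower semi-continuous, so the open superlevel set $\cA_\epsilon$ in \eqref{aepsilon} contains $Q$ itself and the LDP lower bound applied to $\cA_\epsilon$ gives the bound directly at $Q\in\cC^{\rm fin}\cap\cR$, which is how the paper proceeds.

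For the upper bound, the missing piece is how to decouple $e^{N\Phi^M_{\beta,h}(R_N^\omega)}$ from the tail factor $\exp[\sum_{i=1}^N(\chi(Y^{(i)})-M)^+]$. Reducing the tail to ``a $P^\ast$-tail probability of $\{\Phi_{\beta,h}(R_N^\omega)>M\}$'' does not work: under the quenched measure the letters are frozen (only the cut points are random), so no Cram\'er bound on the disorder is available, and in any case the event that \emph{some} of the $N$ excursions hits a stretch with $\chi>M$ has probability of order a constant per excursion, hence is not exponentially small in $N$; on that event the integrand is unbounded, so a probability estimate alone cannot close the bound, and a naive H\"older/Cauchy--Schwarz split inflates the coefficient multiplying $\Phi^M_{\beta,h}$, which destroys the limit $M\to\infty$. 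The paper's mechanism is the \emph{reverse} H\"older inequality \eqref{uperbs} with exponents $p\in(0,1)$, $q<0$, so that the tail factor enters with a negative power $1/q$ and it suffices to show, via the Lemma~\ref{mainlemma}-type Borel--Cantelli estimate restricted to levels $m\geq\log(2e^M-1)$, that the corresponding $P_0^\ast$-exponential moment is at most $1$ for $M$ large (see \eqref{estextra1}); this yields a bound on $\limsup_N\frac{1}{pN}\log E^\ast_g(e^{pN\Phi_{\beta,h}(R_N^\omega)})$, and a separate argument (convexity and finiteness of $p\mapsto S^{\beta,h}(p)$, using the a priori finiteness of $\bar S^{\mathrm{que}}(\beta,h;g)$ from the fractional-moment estimate at the end of Section~\ref{S6.1}) is needed to pass to $p\uparrow 1$. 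Your proposal contains neither the decoupling device nor this final continuity-in-$p$ step, so as written the upper bound does not go through.
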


\begin{proof}
Throughout the proof, $\beta,h>0$  and $g> 0$ are fixed. Note that, since $h(\pi_1Q
\mid q_{\rho,\nu}) \leq H(Q|q_{\rho,\nu}^{\otimes\N}) = I^\mathrm{ann}(Q)<\infty$, it 
follows from (\ref{Phidef}--\ref{phidef}) and Lemma~\ref{finitephi} that $\Phi_{\beta,h}
(Q)$ is finite on $\cC^{\rm fin}=\{Q\in\cP^\mathrm{inv}(\widetilde{E}^\N)\colon\,
I^\mathrm{ann}(Q)<\infty,\,m_Q<\infty\}$. 

\medskip\noindent
{\bf Lower bound:}
Because $\Phi_{\beta,h}$ is lower semi-continuous on $\cP^\mathrm{inv}(\widetilde{E}^\N)$ 
and finite on $\cC^{\rm fin}$, the set 
\begin{equation}
\label{aepsilon}
\cA_\epsilon = \big\{Q'\in\cP^\mathrm{inv}(\widetilde{E}^\N)\colon\,
\Phi_{\beta,h}(Q') > \Phi_{\beta,h}(Q)-\epsilon\big\}
\end{equation}
is open for every $Q\in\cC^{\rm fin}$ and $\epsilon>0$. Fix $Q\in\cC^{\rm fin}\cap\cR$ and $\epsilon>0$, and use 
(\ref{FNexpr}--\ref{Slim}) to estimate
\begin{equation}
\label{varlb1z=1}
\begin{aligned}
\bar{S}^\mathrm{que}(\beta,h;g)
&= \log \cN(g) + \limsup_{N\rightarrow\infty}\frac{1}{N} 
\log E_g^\ast\Big(e^{N\Phi_{\beta,h}(R_N^\omega)}\Big)\\
&\geq \log\cN(g) + \liminf_{N\rightarrow\infty}\frac{1}{N} 
\log E_g^\ast\Big(e^{N\Phi_{\beta,h}(R_N^\omega)}
\,1_{\cA_\epsilon}(R_N^\omega)\Big)\\
&\geq \log \cN(g) + \inf_{Q'\in \cA_\epsilon}\Phi_{\beta,h}(Q')
+ \liminf_{N\rightarrow\infty} \frac{1}{N}\log P_g^\ast(\cA_\epsilon)\\
&\geq \log \cN(g) + \inf_{Q'\in \cA_\epsilon} \Phi_{\beta,h}(Q')
-\inf_{Q'\in \cA_\epsilon} I_g^\mathrm{que}(Q')\\
&\geq \log \cN(g) + \Phi_{\beta,h}(Q)-I_g^\mathrm{que}(Q)-\epsilon,
\end{aligned}
\end{equation}
where in the third inequality we use the quenched LDP in Theorem~\ref{qLDP}. 
Next, note that $I^\mathrm{que}_g(Q)=I^\mathrm{ann}_g(Q)$ for $Q\in\cR$ by 
Theorem~\ref{qLDP} and $I^\mathrm{ann}_g(Q)=I^\mathrm{ann}(Q) + \log\cN(g) + g m_Q$ 
for $Q\in\cC^{\rm fin}$ by Lemma~\ref{rhozre}. Insert these
identities, take the supremum over $Q\in\cC^{\rm fin}\cap\cR$ and let $\epsilon\downarrow 0$, 
to arrive at the desired lower bound.

\medskip \noindent
{\bf Upper bound:}
The proof of the upper bound uses a truncation argument and comes in 4 steps. 

\medskip\noindent
{\bf 1.}
Abbreviate $\chi(y)=\log\phi_{\beta,h}(y).$ For $M>0$, let (compare with 
(\ref{Phidef}--\ref{phidef}))
\begin{equation}
\begin{split}
\Phi^M_{\beta,h}(Q) &= \int_{\widetilde{E}} (\widetilde\pi_1 Q)(dy)
\left[\chi(y) \wedge M\right],\cr
\bar\Phi^M_{\beta,h}(Q)&= \int_{\widetilde{E}}(\widetilde\pi_1 Q)(dy)\,
\chi(y)\,1_{\{\chi(y)>M\}}.
\end{split}
\end{equation}
Since $\phi_{\beta,h}\geq\tfrac12$, $Q\mapsto \Phi^{M}_{\beta,h}(Q)$ is bounded and 
continuous. Our goal will be to compare $\bar{S}^\mathrm{que}(\beta,h;g)$ with its 
truncated analogue (compare with (\ref{FNexpr}--\ref{Slim}))
\begin{equation}
\label{SMdef}
\bar{S}_M^\mathrm{que}(\beta,h;g) = \log \cN(g) + \limsup_{N\to\infty} \frac{1}{N}
\log E_g^\ast\big(e^{N\Phi^M_{\beta,h}(R_N^\omega)}\big),
\qquad M>0,
\end{equation}
and afterwards let $M\to\infty$. 
 
\medskip\noindent
{\bf 2.} Note that 
\begin{equation}
\label{phim}
\Phi_{\beta,h}(Q)-\bar\Phi^M_{\beta,h}(Q) \leq \Phi^M_{\beta,h}(Q).
\end{equation} 
Therefore, for any $g>0$ and any $-\infty<q<0<p<1$ with $p^{-1}+q^{-1}=1$, the reverse
of H\"older's inequality gives
\begin{equation}
\label{uperbs} 
\begin{split}
E_g^\ast\left(e^{N\Phi^M_{\beta,h}(R_N^\omega)}\right)
&\geq E_g^\ast\left(e^{N\Phi_{\beta,h}(R_N^\omega)}\,
e^{-N\bar\Phi^M_{\beta,h}(R_N^\o)}\right)\cr 
&\geq E_g^\ast\left(e^{pN\Phi_{\beta,h}(R_N^\omega)}\right)^{1/p}\, 
E_g^\ast\left(e^{-qN\bar\Phi^M_{\beta,h}(R_N^\o)}\right)^{1/q}\cr
&= E_g^\ast\left(e^{pN\Phi_{\beta,h}(R_N^\omega)}\right)^{1/p}\, 
E_0^\ast\left(e^{-qN\left[\bar\Phi^M_{\beta,h}(R_N^\o)
-[g/(-q)]\,m_{R_N^\o}\right]}\right)^{1/q}\,\cN(g)^{-N/q},
\end{split}
\end{equation}
where the equality uses \eqref{rhoz}, and
\begin{equation}
\begin{aligned}
&N\bar\Phi^M_{\beta,h}(R_N^\o)
= N\int_{\widetilde{E}}(\widetilde\pi_1R_N^\omega)(dy)\,\chi(y)\,1_{\{\chi(y)>M\}}
=\sum_{i=1}^N \chi(y_i)\,1_{\{\chi(y_i)>M\}},\\
&N m_{R_N^\o} = \sum_{i=1}^N \tau(y_i).
\end{aligned}
\end{equation} 
We next claim that $\omega$-a.s.\ there exists an $M'(\omega)<\infty$, depending on 
$\beta$, $h$, $g$ and $p$, such that
\begin{equation}
\label{estextra1}
E_0^\ast\left(e^{-qN\left[\bar\Phi^M_{\beta,h}(R_N^\o)
-[g/(-q)]\,m_{R_N^\o}\right]}\right) \leq 1 \qquad \forall\,M>M'(\omega).
\end{equation}
Indeed, $\{\chi(y_i)>M\} = \{-2\beta\sum_{k\in I_i} (\omega_k+h)>\log(2e^M-1)\}$,
and so we can repeat the argument in the proof of Lemma~\ref{mainlemma}, restricting
the estimates to $m$-values with $m\geq\log(2e^M-1)$. Clearly, there exists an 
$M_0<\infty$ such that $-[g/(-q)]Cm^2+(m+1)\leq 0$ for $m \geq M_0$. Therefore the claim 
in \eqref{estextra1} follows for any $M'(\omega)$ such that $\log(2e^{M'(\omega)}-1)
> M_0 \vee M(\omega)$ with $M(\omega)$ defined below \eqref{sumfin}. With this choice 
of $M'(\omega)$, the term $I$ in \eqref{ineq1} is absent, and we can estimate 
$\bar\Phi^M_{\beta,h}(R_N^\o)-[g/(-q)]\,m_{R_N^\o}\leq 0$ as in 
(\ref{Lambdadef}--\ref{ineq4}).

\medskip\noindent
{\bf 3.} We next apply Varadhan's lemma to \eqref{SMdef} using Theorem~\ref{qLDP} and the 
fact that $\Phi^M_{\beta,h}$ is bounded and continuous on $\cP^\mathrm{inv}(\widetilde{E}^\N)$. This gives
\begin{equation}
\label{SMubz<1}
\begin{aligned}
\bar{S}_M^\mathrm{que}(\beta,h;g) -\log\cN(g) 
&= \sup_{Q\in\cP^\mathrm{inv}(\widetilde{E}^\N)}
\left[\Phi^M_{\beta,h}(Q)-I_g^\mathrm{que}(Q)\right]\\
&= \sup_{Q\in\cR} \left[\,\Phi^M_{\beta,h}(Q)-
g\,m_Q-I^\mathrm{ann}(Q)\right]\\
&= \sup_{Q\in\cC^{\rm fin}\cap\cR}
 \left[\Phi^M_{\beta,h}(Q)-g\,m_Q-I^\mathrm{ann}(Q)\right]\\
&\leq \sup_{Q\in\cC^{\rm fin}\cap\cR}
 \left[\Phi_{\beta,h}(Q)-g\,m_Q-I^\mathrm{ann}(Q)\right]\\
&= S^\mathrm{que}(\beta,h;g), 
\end{aligned}
\end{equation} 
where the second equality uses \eqref{Iannz} and \eqref{eqgndefinitionIalgz},
and the third equality uses that $\Phi^M_{\beta,h} \leq M < \infty$ in combination 
with the fact that the $Q$'s with $I^\mathrm{ann}(Q)=\infty$ or $m_Q=\infty$ do 
not contribute to the supremum. The inequality uses that $\Phi^M_{\beta,h} \leq 
\Phi_{\beta,h}$. Combining \eqref{SMdef} and (\ref{uperbs}--\ref{SMubz<1}), and 
letting $N\to\infty$ followed by $M\to\infty$, we get 
\begin{equation}
\label{smlim}
\frac{1}{p}\log\cN(g)+\limsup_{N\to\infty}\frac{1}{pN}\log E_g^\ast
\left(e^{pN\Phi_{\beta,h}(R_N^\omega)}\right)\\
\leq S^\mathrm{que}(\beta,h;g).
\end{equation}

\medskip\noindent
{\bf 4.}
It remains to show that the left-hand side of \eqref{smlim} tends to $\bar S^\mathrm{que}
(\beta,h;g)$ as $p \uparrow 1$. Define
\begin{equation}
S^{\beta,h}(p) = \limsup_{N\to\infty}\frac{1}{N}\log E_g^\ast
\left(e^{pN\Phi_{\beta,h}(R_N^\omega)}\right), \qquad p \geq 0.
\end{equation} 
Clearly, $p \mapsto S^{\beta,h}(p)$ is non-decreasing and convex on $(0,\infty)$.
Moreover,
\begin{equation}
S^{\beta,h}(p) \leq \left\{\begin{array}{ll}
pS^{\beta,h}(1)  &\mbox{for } p\in (0,1],\\
S^{p\beta,h}(1)  &\mbox{for } p\in [1,\infty).
\end{array}
\right.
\end{equation}
The first line follows from Jensen's inequality, the second line from the fact that
$p\Phi_{beta,h} \leq \Phi_{p\beta,h}$ for $p\in [1,\infty)$ (recall \eqref{phidef}). 
We know from the remark made at the end of Section~\ref{S6.1} that $S^{\beta,h}(1) 
= \bar{S}^\mathrm{que}(\beta,h;g)-\log\cN(g)<\infty$ because $g>0$. Therefore $p
\mapsto S^{\beta,h}(p)$ is finite on $(0,\infty)$ and, by convexity, is continuous 
on $(0,\infty)$.   
\end{proof}


\section{Continuity at $g=0$}
\label{appC}

In this appendix we prove \eqref{Sin1}. The key is the following proposition
relating the two quenched LDP's in Theorem~\ref{qLDP}. Recall 
(\ref{trunword}--\ref{Rdef}), and abbreviate $\cR^\mathrm{fin} = \{Q\in\cR\colon\,
m_Q<\infty\}$.

\begin{proposition}
\label{prop:connvarprinc}
Suppose that $E$ is finite. Then for every $Q\in\cP^\mathrm{inv}(\widetilde{E}^\N)$ 
there exists a sequence $(Q_n)$ in $\cR^\mathrm{fin}$ such that $\lim_{n\to\infty}
I^\mathrm{ann}(Q_n)=I^\mathrm{que}(Q)$. 
\end{proposition}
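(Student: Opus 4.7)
The plan is to reduce via truncation to the case $m_Q<\infty$, then to build $Q_n$ by interleaving long blocks of $Q$-words with a single ``filler'' word whose letters are tilted so that the full concatenation is $\nu^{\otimes\N}$-distributed, and finally to tune the block size against the filler length so that the specific annealed entropy of $Q_n$ converges to $I^\mathrm{fin}(Q)=I^\mathrm{que}(Q)$.

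\emph{Reduction.} By \eqref{truncapproxcont}, $I^\mathrm{que}(Q)=\lim_{\tr\to\infty}I^\mathrm{que}([Q]_\tr)$, and $[Q]_\tr\in\cP^\mathrm{inv,fin}(\widetilde{E}^\N)$ since its words have length at most $\tr$, so $I^\mathrm{que}([Q]_\tr)=I^\mathrm{fin}([Q]_\tr)$. A diagonal extraction reduces the proposition to the case $Q\in\cP^\mathrm{inv,fin}(\widetilde{E}^\N)$ with $m_Q=m<\infty$.

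\emph{Construction.} Pick integers $k_n,L_n\to\infty$ to be tuned. Sample $(Y^{(1)},\dots,Y^{(k_n)})$ jointly from $\widetilde\pi_{k_n}Q$ and, conditionally on the resulting letter concatenation $\kappa_n=\kappa(Y^{(1)},\dots,Y^{(k_n)})\in E^{S_n}$ with $S_n=\tau_1+\cdots+\tau_{k_n}$, append a filler word $Z_n$ of deterministic length $L_n$ whose letter law $\lambda_n(\cdot\mid\kappa_n)$ on $E^{L_n}$ is the conditional $\nu^{\otimes(S_n+L_n)}$-law of the last $L_n$ coordinates given that the first $S_n$ coordinates equal $\kappa_n$. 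Finiteness of $E$ makes this conditional density explicit and positive whenever $\kappa_n$ lies in the (finite) support of the $S_n$-marginal of $\nu^{\otimes\N}$. Repeat i.i.d.\ copies of the cycle $(Y^{(1)},\dots,Y^{(k_n)},Z_n)$, shift-symmetrize over the $k_n+1$ phases, and call the result $Q_n$. By this coupling, the stationary letter process of $Q_n$ is exactly $\nu^{\otimes\N}$, so $\Psi_{Q_n}=\nu^{\otimes\N}$ and $Q_n\in\cR^\mathrm{fin}$, with $m_{Q_n}=(k_n m+L_n)/(k_n+1)<\infty$.

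\emph{Entropy and main obstacle.} Using shift-invariance and the periodic structure,
\[
H(Q_n\mid q_{\rho,\nu}^{\otimes\N})
=\tfrac{k_n}{k_n+1}H(Q\mid q_{\rho,\nu}^{\otimes\N})
+\tfrac{1}{k_n+1}\bigl[-\log\rho(L_n)+\E\bigl[h\bigl(\lambda_n(\cdot\mid\kappa_n)\mid\nu^{\otimes L_n}\bigr)\bigr]\bigr]+o(1),
\]
and combining \eqref{rhocond} with the decomposition \eqref{arg2} (which writes $H(Q\mid q_{\rho,\nu}^{\otimes\N})=mH(\Psi_Q\mid\nu^{\otimes\N})+R(Q)$) gives $-\log\rho(L_n)=(\alpha+o(1))\log L_n$ and $\E[h(\lambda_n(\cdot\mid\kappa_n)\mid\nu^{\otimes L_n})]=k_n m\,H(\Psi_Q\mid\nu^{\otimes\N})+o(k_n)$, the latter by a direct Radon--Nikodym computation against the $\nu^{\otimes(S_n+L_n)}$-reference. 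Choosing $L_n$ so that $\log L_n\sim\tfrac{\alpha-1}{\alpha}k_n m\,H(\Psi_Q\mid\nu^{\otimes\N})$ generates the missing prefactor $(\alpha-1)$ in the limit, yielding $I^\mathrm{ann}(Q_n)\to I^\mathrm{fin}(Q)=I^\mathrm{que}(Q)$. The chief difficulty is achieving $\Psi_{Q_n}=\nu^{\otimes\N}$ \emph{exactly} (not merely weakly), while retaining tight enough control of the entropy of $\lambda_n(\cdot\mid\kappa_n)$ against $\nu^{\otimes L_n}$ to reproduce the $(\alpha-1)$-factor; both steps rely decisively on the finiteness of $E$, which makes the conditional tilting concrete and reduces the large-deviation estimates to elementary Cramér-type bounds on finite-cylinder laws.
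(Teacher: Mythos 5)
There is a genuine gap, and it sits at the heart of your construction. Under the product measure $\nu^{\otimes(S_n+L_n)}$ the last $L_n$ coordinates are independent of the first $S_n$, so your ``tilted'' filler law $\lambda_n(\cdot\mid\kappa_n)$ is simply $\nu^{\otimes L_n}$: the conditioning does nothing. Consequently (a) the concatenated letter process under $Q_n$ is an alternation of $Q$-sampled blocks (whose letter statistics are those of $\Psi_Q$, not $\nu$) with independent $\nu$-stretches, so its a.s.\ empirical letter process converges to a strict convex combination of the two and \emph{not} to $\nu^{\otimes\N}$; hence $Q_n\notin\cR$ unless $\Psi_Q=\nu^{\otimes\N}$ to begin with, and the whole point of the proposition is lost. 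And (b) your entropy bookkeeping is then vacuous: $\E[h(\lambda_n(\cdot\mid\kappa_n)\mid\nu^{\otimes L_n})]=0$, not $k_n m\,H(\Psi_Q\mid\nu^{\otimes\N})+o(k_n)$. If instead you tried to repair this by genuinely tilting the filler so as to force $\Psi_{Q_n}=\nu^{\otimes\N}$ exactly, you run into two obstructions: no filler of deterministic length whose letters are generated independently of the $Q$-words can cancel the non-$\nu$ statistics already laid down by those words (appending letters cannot change the almost-sure empirical contribution of the $Q$-blocks), and even at the level of arithmetic your own decomposition would then give $H(Q\mid q_{\rho,\nu}^{\otimes\N})+\alpha\,m\,H(\Psi_Q\mid\nu^{\otimes\N})$ in the limit, overshooting $I^\mathrm{fin}(Q)$ by $m\,H(\Psi_Q\mid\nu^{\otimes\N})$, because the $Q$-words already carry their letter-mismatch cost inside $H(Q\mid q_{\rho,\nu}^{\otimes\N})$ (recall \eqref{arg2}).

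The missing idea, which is how the paper proceeds, is to \emph{couple the word sequence to a single realization $X$ of $\nu^{\otimes\N}$} rather than to sample the $Q$-words with their own letters. One fixes a finite set of $(M,\epsilon_1)$-typical word patterns for $Q$ and waits, along consecutive $L$-blocks of $X$ with $L\approx Mm_Q$, a geometric number $\sigma_1$ of blocks (success probability $p\approx e^{-MH(\Psi_Q\mid\nu^{\otimes\N})}$) until the letters of $X$ themselves realize such a pattern; the skipped stretch of $X$ becomes one long filler word and the matched stretch is re-cut into $M$ words from the pattern. Since by construction $\kappa(\tilde Y)=X$, membership in $\cR^\mathrm{fin}$ is automatic, and the factor $\alpha-1$ emerges from a cancellation your scheme cannot produce: the filler pays $\approx\alpha\,(-\log p)$ through the word-length tail \eqref{rhocond} but gains $\approx\log p$ because its letters are free (they are just $X$), netting $(\alpha-1)MH(\Psi_Q\mid\nu^{\otimes\N})$ against the $M$ words' cost $\approx MH(Q\mid q_{\rho,\nu}^{\otimes\N})$. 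Note also that this block argument needs $Q$ ergodic (typical patterns must be seen at the correct exponential rate); your proposal never invokes ergodicity, and the general case requires a preliminary approximation of the ergodic decomposition by a finite mixture, in addition to the truncation step you do carry out for $m_Q=\infty$.
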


\begin{proof}
The proof is \emph{not self-contained}, because it uses the approximation argument in 
Birkner, Greven and den Hollander~\cite{BiGrdHo10}, Sections~3--4 (this argument was 
also exploited in Cheliotis and den Hollander~\cite{ChdHo10}, Appendix~B). For 
simplicity we pretend that the support of $\rho$ is $\N$. The proof is easily 
extended to $\rho$ with infinite support.

\medskip\noindent
{\bf 1.}
We first assume that $Q \in \cP^\mathrm{erg,fin}(\widetilde{E}^\N)$ with
\begin{equation} 
\cP^\mathrm{erg,fin}(\widetilde{E}^\N)
= \big\{Q\in\cP^\mathrm{inv}(\widetilde{E}^\N)\colon\,Q \mbox{ is ergodic},\,
m_Q<\infty\big\}.
\end{equation} 
For $M \in \N$ and $\epsilon_1>0$, choose 
\begin{equation}
\label{ABdef}
\cA = \{z_a\colon\,a=1,\dots,A\} \subset \widetilde{E}^M,
\qquad \cB = \{\zeta^{(b)}\colon\, b=1,\dots,B\} = \kappa(\cA),
\end{equation} 
as in \cite{BiGrdHo10}, Equations (3.5--3.6), satisfying also \cite{BiGrdHo10}, 
Equation (4.2--4.3) for a small neighbourhood of $Q$. Each element of $\cB 
\subset \widetilde{E}$ consists of approximately $L=M m_Q$ letters (for simplicity 
we pretend that each $b \in \cB$ has precisely $L$ letters). Cut $X$ 
into $L$-blocks, and let 
\begin{equation}
G_j = 1_{\{\text{an element of $\cB$ appears in $X|{}_{((j-1)L,jL]}$}\}}.
\end{equation}
Note that $(G_j)$ are i.i.d.\ Bernoulli($p$) random variables with
\begin{equation}
p = p(M,\epsilon_1) = \exp\big[-MH(\Psi_Q \mid \nu^{\otimes\N})\,[1+o(1)]\big],
\qquad M\to\infty,\,\epsilon_1 \downarrow 0.
\end{equation} 
Therefore 
\begin{equation}
\sigma_1=\min\{j\in\N\colon\, G_j = 1\}
\end{equation} 
is geometrically distributed  with success probability $p$. Put $\tilde{Y}_1
=\kappa(X|_{(0,(\sigma_1-1)L]})$ (and make a trivial modification when 
$\sigma_1=1$ to avoid an empty word later on). Given $X|_{((\sigma_1-1)L,
\sigma_1 L]} = \zeta^{(b)} \in \cB$, let 
\begin{equation}
(\tilde{Y}_2,\dots,\tilde{Y}_{M+1})=z_a
\end{equation} 
be a suitably drawn random element of $\cA$ ($a$ is drawn uniformly from 
$\{a'\colon\,\kappa(z_{a'})=\zeta^{(b)}\}$). Repeating this construction, we 
obtain a random sequence $\tilde{Y}=(\tilde{Y}_j)$ in $\widetilde{E}^\N$. 
Denote the law of this random sequence by $\tilde{Q}_{M,\epsilon_1}$. Note 
that, by construction, $\kappa(\tilde{Y})=X$, so that $\tilde{Q}_{M,\epsilon_1}
\in \cR$, and that the consecutive $(M+1)$-blocks 
\begin{equation}
(\tilde{Y}_{(k-1)(M+1)+1},\dots,\tilde{Y}_{k(M+1)})_{k\in\N}
\end{equation} 
form an i.i.d.\ sequence (in particular, $\tilde{Q}_{M,\epsilon_1}$ is 
mixing and has finite mean word lenghts). Furthermore, $\tilde{Y}_1$ and 
$(\tilde{Y}_2,\dots,\tilde{Y}_{M+1})$ are independent. Let $\hat{Q}_{M,\epsilon_1}$ 
be the shift-invariant version of $\tilde{Q}_{M,\epsilon_1}$ obtained by 
randomizing the position of the origin. Then 
\begin{equation}
\hat{Q}_{M,\epsilon_1} \in \cR^\mathrm{fin}.
\end{equation} 
By construction, $\hat{Q}_{M,\epsilon_1} \to Q$ weakly as $M\to\infty$ and 
$\epsilon_1 \downarrow 0$. 

\medskip\noindent
{\bf 2.} It remains to check that 
\begin{equation}
\label{eq:claimerg}
I^\mathrm{ann}(\hat{Q}_{M,\epsilon_1}) \to 
I^{\mathrm{que}}(Q), \qquad M\to\infty,\,\epsilon_1 \downarrow 0.
\end{equation} 
Since $\hat{Q}_{M,\epsilon_1}$ is the shift-invariant mean of $\tilde{Q}_{M,\epsilon_1}$, 
we have 
\begin{equation} 
\label{eq:decompHtildeQ}
H(\hat{Q}_{M,\epsilon_1} \mid q_{\rho,\nu}^{\otimes\N}) 
= H(\tilde{Q}_{M,\epsilon_1} \mid q_{\rho,\nu}^{\otimes\N}) 
= \frac1{M+1} \Big[h\big(\cL(\tilde{Y}_1) \mid q_{\rho,\nu}\big) 
+ h\big(\cL(\tilde{Y}_2,\dots,\tilde{Y}_{M+1}) \mid 
q_{\rho,\nu}^{\otimes M}\big)\Big],
\end{equation}
where the second equality uses the special block structure of $\tilde{Q}_{M,\epsilon_1}$. 
By construction, we have 
\begin{equation} 
h\big(\cL(\tilde{Y}_2,\dots,\tilde{Y}_{M+1}) \mid q_{\rho,\nu}^{\otimes M}\big)
\in MH(Q^* \mid q_{\rho,\nu}^{\otimes \N}) 
+ (-4\epsilon_1 M, 4\epsilon_1 M)
\end{equation}
(see \cite{BiGrdHo10}, Equations (3.6) and (3.8)). Furthermore, 
\begin{equation} 
\label{eq:firstcontr}
h\big(\cL(\tilde{Y}_1) \mid q_{\rho,\nu}\big) 
\in M (\alpha-1) H(\Psi_Q \mid \nu^{\otimes \N}) + (-\delta M, \delta M),
\end{equation}
where $\delta \downarrow 0$ as $\epsilon_1 \downarrow 0$. To see why the latter holds, 
note that 
\begin{equation}
\label{hform1}
\begin{aligned} 
h\big(\cL(\tilde{Y}_1) \mid q_{\rho,\nu}\big)
&= \sum_{t=0}^\infty \sum_{x_1,\dots,x_{tL} \in E \atop 
\text{no $L$-block from $\cB$}} 
\frac{p(1-p)^t \prod_{k=1}^{tL}\nu(x_k)}{(1-p)^t} \log\left[\frac{p(1-p)^t 
\frac{\prod_{k=1}^{tL}\nu(x_k)}{(1-p)^t}}{\rho(tL)\prod_{k=1}^{tL}\nu(x_k)}\right] \\
&= \sum_{t=0}^\infty p \sum_{x_1,\dots,x_{tL} \in E \atop 
\text{no $L$-block from $\cB$}} 
\Big(\prod_{k=1}^{tL}\nu(x_k)\Big)
\log\left[\frac{p}{\rho(tL)}\right]\\
&= \sum_{t=0}^\infty p(1-p)^t\,\log\left[\frac{p}{\rho(tL)}\right] \\
&= \log p 
- \sum_{t=0}^\infty p (1-p)^t \log(tL)\,\frac{\log(\rho(tL))}{\log(tL)}\\ 
&= \log p + \alpha[1+o(1)] \sum_{t=0}^\infty p (1-p)^t \log(tL),
\qquad L\to\infty.
\end{aligned}
\end{equation}
Finally, note that $\log L = \log (Mm_Q) = O(\log M) = o(M)$ as $M\to\infty$ and
\begin{equation}
\label{hform2}
\sum_{t=0}^\infty p (1-p)^t \log t 
= -\log p\,+\, \sum_{t=0}^\infty p (1-p)^t\,\log(tp) 
= -\log p\,+\, \int_0^\infty e^{-y}\,\log y\,dy 
\,+\, o(1), \quad p \downarrow 0,
\end{equation}
where the integral equals minus Euler's constant. Since $-\log p \in M H(\Psi_Q 
\mid \nu^{\otimes\N}) + [-\delta M,\delta M]$, (\ref{hform1}--\ref{hform2}) combine 
to yield (\ref{eq:firstcontr}). Clearly, (\ref{eq:decompHtildeQ}--\ref{eq:firstcontr})
imply (\ref{eq:claimerg}), which completes the proof for $Q \in \cP^\mathrm{erg,fin}
(\widetilde{E}^\N)$. 

\medskip\noindent
{\bf 3.}
If $Q \in \cP^\mathrm{inv}(\widetilde{E}^\N)$ is ergodic with $m_Q=\infty$, then we 
approximate $Q$ by$[Q]_\tr$ (recall \eqref{trunword}), approximate each $[Q]_\tr$ 
from inside $\cR^\mathrm{fin}$ as above, and then diagonalize the approximation 
scheme. This yields the claim because $[Q]_\tr \to Q$ weakly and $I^\mathrm{que}([Q]_\tr) 
\to I^{\mathrm{que}}(Q)$ as $\tr\to\infty$ (recall (\ref{truncapproxcont})). 
Finally, if $Q \in \cP^\mathrm{inv}(\widetilde{E}^\N)$ is not ergodic, then 
we first approximate its ergodic decomposition by a finite sum and afterwards 
approximate each summand as above (similarly as in \cite{Bi08}, proof of 
Proposition~2, and \cite{BiGrdHo10}, proof of Proposition~4.1).
\end{proof}

We are now ready to prove (\ref{Sin1}). 

\medskip\noindent
$\bullet$ \underline{1st inequality}:
In \eqref{Sdef} we defined
\begin{equation}
\label{Scomp}
S^\mathrm{que}(\beta,h;g)
= \sup_{\cC^\mathrm{fin}\cap\cR} [\Phi_{\beta,h}(Q)-gm_Q-I^\mathrm{ann}(Q)].
\end{equation}
As shown in Appendix~\ref{appB} (recall \eqref{Sfinalpre}),
\begin{equation}
\label{Scomp1}
\bar{S}^\mathrm{que}(\beta,h;g) = S^\mathrm{que}(\beta,h;g) \qquad \forall\,g>0.
\end{equation}
Let $(Q_n)$ be any sequence in $\cC^\mathrm{fin}\cap\cR$ such that 
\begin{equation}
\Phi_{\beta,h}(Q_n)-I^\mathrm{ann}(Q_n) \geq 
S^\mathrm{que}(\beta,h;0)-\tfrac{1}{n}.
\end{equation}
By choosing $g=g_n=1/n\max\{m_{Q_1},\dots,m_{Q_n}\}$ in (\ref{Scomp}--\ref{Scomp1}), 
we get
\begin{equation}
\bar{S}^\mathrm{que}(\beta,h;g_n) \geq S^\mathrm{que}(\beta,h;0)-\tfrac2n,
\end{equation}
which yields $\bar{S}^\mathrm{que}(\beta,h;0+) \geq S^\mathrm{que}(\beta,h;0)$
after letting $n\to\infty$.

\medskip\noindent
$\bullet$ \underline{2nd inequality}:
Recall that $E=\R$. For $M\in\N$, let
\begin{equation}
D_M = \big\{-M,-M+1/M,\dots,M-1/M,M\big\}
\end{equation} 
be the grid of spacing $1/M$ in $[-M,M]$, which serves as a finite set of letters
approximating $E$. Let $\widetilde{D}_M = \cup_{n\in\N} D_M^n$ be the set of finite 
words drawn from $D_M$. Let $T_M\colon\,E \to D_M$ be the letter map
\begin{equation}
\label{TMdef}
T_M(x) = \left\{\begin{array}{ll}
M &\mbox{ for } x \in [M,\infty),\\
M \lceil x/M \rceil &\mbox{ for } x \in (-M,M),\\
-M &\mbox{ for } x \in (-\infty,-M],
\end{array}
\right.
\end{equation}
and $\widetilde{T}_M\colon\,\widetilde{E} = \cup_{k\in\N} E^k \to [\widetilde{D}_M]_M 
= \cup_{k=1}^M D_M^k$ the word map
\begin{equation}
\widetilde{T}_M(y) = \widetilde{T}_M(x_1,\dots,x_m)
= (T_Mx_1,\dots,T_Mx_{m \wedge M}), \qquad m\in\N,\,x_1,\dots,x_m\in E.
\end{equation}
For $Q\in\cP^\mathrm{inv}(\widetilde{D}_M^\N)$, define (compare with 
(\ref{Phidef}--\ref{phidef}))
\begin{equation}
\Phi^M_{\beta,h}(Q) = \int_{\widetilde{D}_M} (\widetilde\pi_1 Q)(dy) 
\log \phi^M_{\beta,h}(y),
\end{equation}
where, for $y \in \widetilde{D}_M^\N$,
\begin{equation}
\label{phiMphi}
\phi^M_{\beta,h}(y) = \left\{\begin{array}{ll}
\phi_{\beta,h}(y) &\mbox{ for } y=(x_1,\dots,x_m) \in 
[D_M\setminus \{-M,M\}]^m,\,m=1,\dots,M-1,\\
\tfrac12 &\mbox{ otherwise.}
\end{array}
\right.
\end{equation}

Next, let $I_M^\mathrm{que}\colon\,\cP^\mathrm{inv}(\widetilde{D}_M^\N) \to [0,\infty]$ 
and $I_M^\mathrm{ann}\colon\,\cP^\mathrm{inv}(\widetilde{D}_M^\N) \to [0,\infty]$ be the 
quenched, respectively, annealed rate function when the disorder distribution is $\nu_M$ 
given by $\nu_M = \nu \circ T_M^{-1}$ and the word length distribution is $\rho_M$ given by 
$\rho_M(m) = \rho(m)$ for $m=1,\dots,M-1$ and $\rho(M) = \sum_{m\geq M} \rho(m)$. Define
\begin{equation}
\cC_M^\mathrm{fin} = \big\{Q\in\cP^\mathrm{inv}(\widetilde{D}_M^\N)\colon\,
I_M^\mathrm{que}(Q)<\infty,\,m_Q<\infty\big\}.
\end{equation} 
We know from \eqref{jexcont}, \eqref{FNdef} and \eqref{Slim} that $\bar{S}^\mathrm{que}
(\beta,h;g)$ is non-increasing as a function of the disorder distribution $\nu$. 
Inside the interval $(-M,M)$ the map $T_M$ moves points upwards, while $\phi_{\beta,h}(y) 
\geq \tfrac12$, $y\in\widetilde{E}$. We therefore see from \eqref{phiMphi} that 
$\phi_{\beta,h}(y) \geq \phi^M_{\beta,h}(\widetilde{T}_My)$, $y\in\widetilde{E}$.
Hence $\bar{S}^\mathrm{que}(\beta,h;g)$ is bounded from below by its analogue 
$\bar{S}_M^\mathrm{que}(\beta,h;g)$ with $\nu$ replaced by $\nu_M$, $\rho$ by $\rho_M$ 
and $\phi_{\beta,h}$ by $\phi^M_{\beta,h}$. It therefore follows from \eqref{Scomp1}
and the first inequality that
\begin{equation}
\label{Scomp2}
\begin{aligned}
\bar{S}^\mathrm{que}(\beta,h;0+) &\geq \bar{S}_M^\mathrm{que}(\beta,h;0+)
\geq S_M^\mathrm{que}(\beta,h;0)\\
&= \sup_{Q\in\cC_M^\mathrm{fin} \cap \cR}
\big[\Phi^M_{\beta,h}(Q)-I_M^\mathrm{ann}(Q)\big]
= \sup_{Q\in\cC_M^\mathrm{fin}}
\big[\Phi^M_{\beta,h}(Q)-I_M^\mathrm{que}(Q)\big],
\end{aligned}
\end{equation}
where the last equality in \eqref{Scomp2} uses Proposition~\ref{prop:connvarprinc} in 
combination with the fact that $D_M$ is finite and $\Phi^M_{\beta,h}$ is bounded and 
continuous on $\cC_M^\mathrm{fin}$ (note that $I_M^\mathrm{ann}=I_M^\mathrm{que}$ on
$\cC_M^\mathrm{fin} \cap \cR$ by \eqref{Requiv}). For $Q\in \cP^\mathrm{inv}(\widetilde{E}^\N)$, 
let $[Q]_M = Q \circ (\widetilde{T}_M^\N)^{-1}$. Then the right-hand side of 
\eqref{Scomp2} equals 
\begin{equation}
\label{Scomp3}
\sup_{Q\in\cC^\mathrm{fin}} \big[\Phi^M_{\beta,h}([Q]_M)
-I_M^\mathrm{que}([Q]_M)\big].
\end{equation}

Next, $m_{[Q]_M} \leq m_Q$, $\widetilde{T}_M$ is a projection, and relative 
entropies are non-increasing under the action of a projection. Recalling 
(\ref{eqgndefinitionIalg}--\ref{eqnratefctexplicitalg}), we therefore have 
$I_M^\mathrm{que}([Q]_M) \leq I^\mathrm{que}(Q)$ for all $Q\in\cP^\mathrm{inv}
(\widetilde{E}^\N)$ and $M\in\N$. Hence (\ref{Scomp2}--\ref{Scomp3}) combine to 
give
\begin{equation}
\label{Scomp4}
\bar{S}^\mathrm{que}(\beta,h;0+) \geq \sup_{Q\in\cC^\mathrm{fin}} 
\big[\Phi^M_{\beta,h}([Q]_M)-I^\mathrm{que}(Q)\big].
\end{equation}
Finally, because $\lim_{M\to\infty} [Q]_M=Q$ weakly for all $Q\in\cP^\mathrm{inv}
(\widetilde{E}^\N)$ and $\lim_{M\to\infty}\phi^M_{\beta,h}(y)=\phi_{\beta,h}(y)$ 
for all $y\in\widetilde{E}$, Fatou's lemma tells us that $\lim_{M\to\infty} 
\Phi^M_{\beta,h}([Q]_M) \geq \Phi_{\beta,h}(Q)$. Hence we arrive at (recall 
\eqref{Sdefalt})
\begin{equation}
\bar{S}^\mathrm{que}(\beta,h;0+) \geq \sup_{Q\in\cC^\mathrm{fin}} 
[\Phi_{\beta,h}(Q)-I^\mathrm{que}(Q)] = S_*^\mathrm{que}(\beta,h).
\end{equation}


\section{Concentration of measure estimates for the disorder}
\label{appD}

First we introduce some notation. After that we state and prove the concentration of 
measure estimate for the disorder $\omega$ that was used in the proof of 
Lemmas~\ref{mainlemma} and \ref{finitephi}  (Lemmas~\ref{disordertail}--\ref{lbF} below). 

Recall \eqref{mgffin}. The cumulant generating function $\lambda \mapsto M(\lambda)$ 
is analytic, non-negative and strictly convex on $\R$, with $M(0)=M'(0)=0$ (recall 
(\ref{nuzmuv})). In particular, $G=M'$ and its inverse $H=G^{-1}$ are both analytic 
and strictly increasing on $[0,\infty)$. 

For $W,x>0$, define
\begin{equation}
\label{fdef}
\begin{split}
f_{W,x}(\l)= x\,\left[M(\l)-\l\,\tfrac{W}{x}\right], \qquad \lambda \in \R,
\end{split}
\end{equation}
and note that $\l \mapsto f_{W,x}(\l)$ is strictly convex on $\R$, with $f_{W,x}(0)
= 0$ and $f_{W,x}'(0)=-W<0$. Putting
\begin{equation}
\label{pdef}
\chi = \lim_{\l\to\infty} G(\l) \in (0,\infty]
\end{equation}
(which, by (\ref{mgffin}), equals the supremum of the support of the law of $-\omega_1$),
we have $\lim_{\l\to\infty} f_{W,x}(\l)/\l$ $=\chi x-W$, and so there are two cases:
\begin{enumerate}
\item[(I)] 
If $\frac{W}{x}\leq\chi$, then $f_{W,x}$ has a unique minimizer at some $\l_*\in (0,\infty]$.
Note that $\l^*=\infty$ if and only if  $\frac{W}{x}=l_*$. 
\item[(II)]
If $\frac{W}{x}>\chi$, then $f_{W,x}$ attains it minimum at infinity. In this case 
$f_{W,x}(\infty)=-\infty$, since 
\begin{equation}
-\l\,W\leq f_{W,x}(\l)=-\l\,x\left[\frac{W}{x}-\frac{M(\l)}{\l}\right]
\leq -\l\,x\left[\frac{W}{x}-\chi\right],
\end{equation}
where we use that $0\leq \frac{M(\l)}{\l}\leq\chi$.
\end{enumerate}
In case (I), we have
\begin{equation}
\label{lamstar}
\l_* = \l_*(W,x) = H(\tfrac{W}{x}), \qquad
f_{W,x}(\l_*) = -x\,\left[\l_*\,G(\l_*)-M(\l_*)\right].
\end{equation}
Since $H(y)$ is well defined only for $y\leq\chi$, in what follows we will always assume 
that the arguments of $H$ are at most $\chi$.

Our concentration of measure estimate is the following. Let
\begin{equation}
\label{Fdef}
F(\l) = \l\,G(\l)-M(\l), \qquad \lambda\in [0,\infty).
\end{equation}

\begin{figure}[htbp]
\vspace{3cm}
\begin{center}
\begin{picture}(14,9)(0,-1.5)
\setlength{\unitlength}{0.45cm}
\put(-7,0){\line(7,0){14}}
\put(0,-3){\line(0,7){9}}
{\thicklines
\qbezier(0,0)(3,.3)(6,5)
\qbezier(0,0)(-3,.3)(-6,5)
}
\qbezier[40](0,-2)(3,1.6)(7,5.1)
\qbezier[10](3.3,0) (3.3,0.6) (3.3,1.5)
\put(-.8,.6){$0$}  
\put(7.5,-0.2){$\lambda$}
\put(-1,6.7){$M(\lambda)$}
\put(3,-1.1){$\bar\lambda$}
\put(3.3,1.6){\circle*{.3}}
\put(-2.8,-2.2){$-F(\bar\lambda)$}
\end{picture}
\end{center}
\vspace{.4cm}
\begin{center}
\caption{\small Qualitative picture of $\lambda\mapsto M(\lambda)$. 
The slope at $\bar\lambda$ equals $G(\bar\lambda)$.}
\end{center}
\label{fig-mofl}
\end{figure}
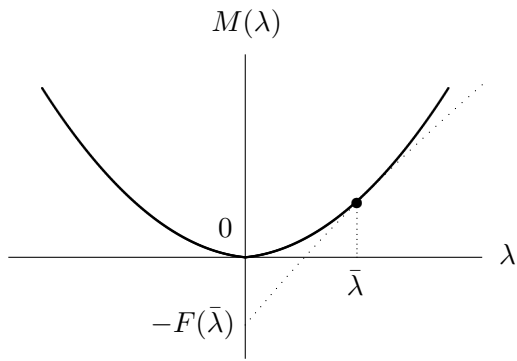
\vspace{-1cm}

\begin{lemma}
\label{disordertail}
For $n\in\N$ and $A,B>0$, 
\begin{equation}
\label{mnbound}
\P\left(\sum_{k=1}^n\o_k\leq -A-nB\right)
\left\{\begin{array}{ll}
\leq \exp\left[-n\,F(H(\tfrac{A}{n}+B))\right] &\mbox{ when } A/n+B\leq \chi,\\ 
= 0 &\mbox{ when } A/n+B > \chi,
\end{array}
\right.
\end{equation}
where 
\begin{equation}\label{mnbound1}
n\,F(H(\tfrac{A}{n}+B)) \geq  C(A+n),\quad \text{ when } A/n+B\leq \chi,
\end{equation} 
with 
\begin{equation}
\label{Cdef}
C=\tfrac12[F(H(B))\wedge F(H(1))]>0.
\end{equation}
\end{lemma}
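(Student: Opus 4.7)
The plan is to obtain (\ref{mnbound}) by a standard Chernoff/Cram\'er exponential bound and then to derive (\ref{mnbound1}) by a short convexity argument built on the identity $\frac{d}{dy}F(H(y))=H(y)$. Rewriting the event as $\{-\sum_{k=1}^n\omega_k\geq A+nB\}$, Markov's inequality with an exponential tilt gives, for every $\lambda\geq 0$,
\begin{equation*}
\P\Big(\sum_{k=1}^n\omega_k\leq -A-nB\Big)
\leq e^{-\lambda(A+nB)}\,\E(e^{-\lambda\omega_1})^n
= \exp\!\left[n M(\lambda)-\lambda(A+nB)\right]
= \exp\!\left[f_{A+nB,n}(\lambda)\right],
\end{equation*}
where $f_{W,x}$ is the function introduced in (\ref{fdef}) with $W=A+nB$ and $x=n$, so that $W/x=A/n+B$. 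I would then invoke the case analysis already carried out in (\ref{pdef})--(\ref{lamstar}). If $A/n+B\leq\chi$, the minimizer is $\lambda_*=H(A/n+B)$ and the minimum equals $-nF(H(A/n+B))$, yielding the first line of (\ref{mnbound}).

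For the complementary regime $A/n+B>\chi$, I would use the standard consequence of (\ref{mgffin}) that $\chi$ coincides with the essential supremum of $-\omega_1$. Indeed, $G(\lambda)=\E(-\omega_1 e^{-\lambda\omega_1})/\E(e^{-\lambda\omega_1})\leq \mathrm{ess\,sup}(-\omega_1)$, and tilting as $\lambda\to\infty$ shows that $G(\lambda)\to\mathrm{ess\,sup}(-\omega_1)$, so $\chi=\mathrm{ess\,sup}(-\omega_1)$. If $\chi<\infty$ and $A/n+B>\chi$, then $-\sum_{k=1}^n\omega_k\leq n\chi<A+nB$ almost surely, proving the second line of (\ref{mnbound}).

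For (\ref{mnbound1}), I would first compute $F'(\lambda)=\lambda G'(\lambda)$ (using $M'=G$) and $H'(y)=1/G'(H(y))$, so that
\begin{equation*}
\tfrac{d}{dy}F(H(y))=F'(H(y))\,H'(y)=H(y),
\end{equation*}
giving the clean representation $F(H(y))=\int_0^y H(u)\,du$. Since $H$ is non-negative, non-decreasing, and vanishes at $0$, the function $y\mapsto F(H(y))$ is non-negative, non-decreasing and convex with value $0$ at $y=0$, so $y\mapsto F(H(y))/y$ is non-decreasing on $(0,\infty)$. Splitting $\int_0^{A/n+B}H=\int_0^{A/n}H+\int_{A/n}^{A/n+B}H$ and using monotonicity of $H$ on the second integral yields the super-additivity
\begin{equation*}
F(H(A/n+B))\geq F(H(A/n))+F(H(B)).
\end{equation*}

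I would then conclude by distinguishing two cases. If $A/n\geq 1$, the convexity-through-origin bound gives $F(H(A/n))\geq (A/n)\,F(H(1))$, hence
\begin{equation*}
nF(H(A/n+B))\geq A\,F(H(1))+n\,F(H(B))\geq [F(H(1))\wedge F(H(B))]\,(A+n)=2C(A+n).
\end{equation*}
If $A/n<1$, then $A+n<2n$ and the monotonicity of $F\circ H$ gives $nF(H(A/n+B))\geq nF(H(B))\geq \tfrac12 F(H(B))(A+n)\geq C(A+n)$. In both cases the bound $nF(H(A/n+B))\geq C(A+n)$ holds with $C$ as in (\ref{Cdef}). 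There is no serious obstacle in the argument: the only subtle point is the identification $\chi=\mathrm{ess\,sup}(-\omega_1)$ used to rule out the regime $A/n+B>\chi$, and the rest is a textbook Cram\'er bound followed by an elementary convexity estimate.
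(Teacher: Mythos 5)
Your proposal is correct and follows essentially the same route as the paper: the exponential Chernoff bound with optimal tilt $\lambda_*=H(\tfrac{A}{n}+B)$ gives \eqref{mnbound} (your ess-sup argument for the case $A/n+B>\chi$ is equivalent to the paper's observation that the infimum of $f_{W,n}$ is $-\infty$ there), and your two-case estimate for \eqref{mnbound1}, split according to $A/n\geq 1$ or $A/n<1$, is the same argument as the paper's Lemma~\ref{lbF}, resting on the monotonicity of $y\mapsto y^{-1}F(H(y))$. The only added value is that you actually justify this monotonicity via the identity $F(H(y))=\int_0^y H(u)\,du$, a fact the paper asserts without proof.
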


\begin{proof}
Estimate
\begin{equation}
\begin{split}
\Pr\left(\sum_{k=1}^n\o_k\leq -W\right)
&= \inf_{\l>0} \Pr\left(e^{-\l\sum_{k=1}^n\o_k} \geq e^{\l\,W}\right)\cr
&\leq \inf_{\l>0} e^{-\l\,W}\,\Big[\E\big(e^{-\l\o_1}\big)\Big]^n
= \inf_{\l>0} e^{-\l\,W+n\,M(\l)} = e^{\inf_{\l>0} f_{W,n}(\l)}
\end{split}
\end{equation}
with $\l\mapsto f_{W,n}(\l)$ the function defined in \eqref{fdef}. In Case (I),
(\ref{lamstar}) shows that the minimal value of $f_{W,n}$ is $-nF(\l_*(W,n))
=-nF(H(\frac{W}{n}))$. Together with the lower bound on $nF(H(\frac{W}{n}))$ that 
is derived in Lemma~\ref{lbF} below, this proves the first line of (\ref{mnbound})
with the estimates in (\ref{mnbound1}--\ref{Cdef}). In Case (II), $f_{W,n}$ attains 
its infimum at infinity, with $f_{W,n}(\infty)=-\infty$, which proves the second 
line of (\ref{mnbound}).
\end{proof}

\begin{lemma}
\label{lbF}
For every $A,B>0$ and $x\in[1,\infty)$ with $A/x+B\leq \chi$ there exists a $C>0$ 
(depending on $B$ only) such that
\begin{equation}
\label{lbFm}
x\,F(H(\tfrac{A}{x}+B)) \geq C(A+x), \qquad x\in[1,\infty).
\end{equation}
\end{lemma}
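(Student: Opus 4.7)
The plan is to recognize that $L(z) := F(H(z))$ is precisely the Cramér (Legendre) rate function associated with $M$. Indeed, differentiating gives $L'(z) = H(z)$ (as $F'(\lambda) = \lambda M''(\lambda)$ and $H' = 1/G'$), and $L(0)=0$, so
\begin{equation*}
L(z) = \int_0^z H(s)\,ds = \sup_{\lambda \geq 0}\bigl[\lambda z - M(\lambda)\bigr].
\end{equation*}
Thus $L$ is nonnegative and convex on $[0,\chi]$, strictly positive on $(0,\chi]$, and a standard consequence of convexity together with $L(0)=0$ is that $z\mapsto L(z)/z$ is nondecreasing on $(0,\chi]$. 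Changing variables to $z = A/x + B \in [B,\chi]$, one has $A + x = x(z - B + 1)$, so the desired estimate $xF(H(A/x+B)) \geq C(A+x)$ becomes
\begin{equation*}
L(z) \geq C(z - B + 1), \qquad z \in [B,\chi],
\end{equation*}
and it suffices to establish this scalar inequality with $C$ depending only on $B$.

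I would then split into two regimes according to whether $A \leq x$ or $A \geq x$, i.e.\ $z \in [B, B+1]$ or $z \in [B+1,\chi]$. In the first regime, $z - B + 1 \leq 2$ and monotonicity of $L$ give $L(z) \geq L(B) \geq \tfrac12 L(B)(z - B + 1)$. In the second regime, the constraint $z \leq \chi$ forces $\chi \geq B + 1 \geq 1$, so $L(1)$ is finite and strictly positive; the nondecreasing ratio $L(z)/z$ combined with $z \geq 1$ yields $L(z) \geq L(1)\,z$, and since $z \geq 1$ implies $z - B + 1 \leq z + 1 \leq 2z$, this gives $L(z) \geq \tfrac12 L(1)(z - B + 1)$.

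Setting $C = \tfrac12 [L(B) \wedge L(1)] = \tfrac12 [F(H(B)) \wedge F(H(1))]$ handles both regimes simultaneously and matches the constant announced in \eqref{Cdef}. If $\chi < 1$ then $L(1) = +\infty$ by the Legendre representation and the second regime is vacuous, so $C = \tfrac12 F(H(B))$ suffices; in every case $C$ depends only on $B$ as required. The argument is almost entirely convexity bookkeeping, so there is no substantive obstacle; the only point requiring care is isolating the two regimes so that the constant term in $z - B + 1$ is handled via monotonicity of $L$ and the linear term via monotonicity of $L(z)/z$, the two bounds being tight in different asymptotics of $A/x$.
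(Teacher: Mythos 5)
Your proof is correct and follows essentially the same route as the paper: the same case split $A\le x$ versus $A\ge x$ (equivalently $z\in[B,B+1]$ versus $z\ge B+1$ after your change of variables), the same two ingredients (monotonicity of $F\circ H$ and of $y\mapsto y^{-1}F(H(y))$), and the same constant $C=\tfrac12[F(H(B))\wedge F(H(1))]$. The only added value is your identification of $F(H(z))$ with the Legendre transform $\sup_{\lambda\ge 0}[\lambda z-M(\lambda)]$, which supplies a proof of the monotonicity of $y^{-1}F(H(y))$ that the paper merely asserts, and which handles the vacuous case $\chi<1$ explicitly.
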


\begin{proof} 
For $x \geq A$, estimate
\begin{equation}
 x\,F(H(\tfrac{A}{x}+B))
\geq x\,F(H(B)) \geq \tfrac12(A+x)\,F(H(B)). 
\end{equation}
For $x \leq A$, on the other hand, estimate  
\begin{equation}
x\,F(H(\tfrac{A}{x}+B))
\geq A\,(\tfrac{A}{x})^{-1}\,F(H(\tfrac{A}{x}))
\geq A\,F(H(1)) \geq \tfrac12(A+x)\,F(H(1)),
\end{equation}
where the second inequality uses that $y\mapsto y^{-1}F(H(y))$ is strictly increasing
on $(0,\chi)$. Combining the two estimates, we get the claim with $C$ given by (\ref{Cdef}).
\end{proof}




\begin{thebibliography}{99}

\bibitem{BiGoTe87}
N.H.\ Bingham, C.M.\ Goldie and J.L.\ Teugels,
\emph{Regular Variation}, Encyclopedia of Mathematics and Its Applications,
Vol.\ 27, Cambridge University Press, Cambridge, 1987. 

\bibitem{Bi08}
M.\ Birkner, 
Conditional large deviations for a sequence of words,
Stoch.\ Proc.\ Appl.\ 118 (2008) 703--729.

\bibitem{BiGrdHo10}
M.\ Birkner, A.\ Greven and F.\ den Hollander,
Quenched large deviation principle for words in a letter sequence,
Probab.\ Theory Relat.\ Fields 148 (2010) 403--456.

\bibitem{BiGrdHo11} 
M.\ Birkner, A.\ Greven, F.\ den Hollander, 
Collision local time of transient random walks and intermediate phases 
in interacting stochastic systems,
Elect.\ J.\ Probab.\ 16 (2011) 552--586.

\bibitem{BidHo99}
M.\ Biskup and F.\ den Hollander, 
A heteropolymer near a linear interface, 
Ann.\ Appl.\ Probab.\ 9 (1999) 668--687.

\bibitem{BoGi04}
T.\ Bodineau and G.\ Giacomin,
On the localization transition  of random copolymers near selective interfaces,
J.\ Stat.\ Phys.\ 117 (2004) 17--34.

\bibitem{BoGiLaTo08}
T.\ Bodineau, G.\ Giacomin, H.\ Lacoin and F.L.\ Toninelli,
Copolymers at selective interfaces: new bounds on the phase diagram,
J.\ Stat.\ Phys.\ 132 (2008) 603--626.

\bibitem{BodHo97}
E.\ Bolthausen and F.\ den Hollander, 
Localization transition for a polymer near an interface, 
Ann.\ Probab.\ 25 (1997) 1334--1366.

\bibitem{CaGi05}
F.\ Caravenna and G.\ Giacomin,
On  constrained annealed bounds for pinning and wetting models,
Elect.\ Comm.\ Probab.\ 10 (2005) 179--189.

\bibitem{CaGiGu06}
F.\ Caravenna, G.\ Giacomin and M.\ Gubinelli,
A numerical approach to copolymers at selective interfaces,
J.\ Stat.\ Phys.\ 122 (2006) 799--832.

\bibitem{CaGi10}
F.\ Caravenna and G.\ Giacomin, 
The weak coupling limit of disordered copolymer models, 
Ann.\ Probab.\ 38 (2010) 2322--2378.

\bibitem{CaGiTo11}
F.\ Caravenna, G.\ Giacomin and F.L.\ Toninelli, 
Copolymers at selective interfaces: settled issues and open problems, 
in: Festschrift on the occasion of the 60th-birthday of  J\"urgen G\"artner 
and the 65th-birthday of Erwin Bolthausen, 
to appear in Springer Proceedings in Mathematics.

\bibitem{ChdHo10}
D.\ Cheliotis and F.\ den Hollander,
Variational characterization of the critical curve for pinning of random polymers, 
EURANDOM Report 2010--024, 
to appear in Ann.\ Probab.

\bibitem{DeZe98}
A.\ Dembo and O.\ Zeitouni,
\emph{Large Deviations Techniques and Applications} (2nd.\ ed.),
Springer, New York, 1998. 

\bibitem{Fe68}
W.\ Feller, \emph{An Introduction to Probability Theory and Its Applications},
Vol.\ 1 (3rd.\ ed), 1968.

\bibitem{GaHuLeOr89}
T.\ Garel, D.A.\ Huse, S.\ Leibler and H.\ Orland, 
Localization transition of random chains at interfaces, 
Europhys.\ Lett.\ 8 (1989) 9--13.

\bibitem{GiTo05}
G.\ Giacomin and F.L.\ Toninelli,
Estimates on path delocalization for copolymers at selective interface, Probab. 
Theory Relat. Fields 133, (2005) 464--482.

\bibitem{GiTo06a}
G.\ Giacomin and F.L.\ Toninelli, 
Smoothing of depinning transitions for directed polymers with quenched disorder, 
Phys.\ Rev.\ Lett.\ 96 (2006) 070602.

\bibitem{GiTo06b}
G.\ Giacomin and F.L.\ Toninelli, 
Smoothing effect of quenched disorder on polymer depinning transitions, 
Commun.\ Math.\ Phys. 266 (2006) 1--16.

\bibitem{GiTo06c}
G.\ Giacomin and F.L.\ Toninelli, 
The localized phase of disordered copolymers with adsorption, 
Alea 1 (2006) 149--180.

\bibitem{Gi07}
G.\ Giacomin,
\emph{Random Polymer Models}, 
Imperial College Press, World Scientific, London, 2007.

\bibitem{dHo09}
F.\ den Hollander,
\emph{Random Polymers}, 
Lecture Notes in Mathematics 1974, Springer, Berlin, 2009.

\bibitem{dHo10}
F.\ den Hollander,
A key large deviation principle for interacting stochastic systems, 
Proceedings of the International Congress of Mathematicians, Hyderabad, 
India, August 19--27, 2010, Vol.\ IV, pp. 2258--2274, Hindustan Book Agency, 
New Delhi, 2010.

\bibitem{Mopr}
J.-C.\ Mourrat,
On the delocalized phase of the random pinning model,
preprint 2010, arXiv:1010.4671v1[math.PR].

\bibitem{OrReWh02}
E.\ Orlandini, A.\ Rechnitzer and S.G.\ Whittington, 
Random copolymers and the Morita approximation: polymer adsorption and polymer localization,
J.\ Phys.\ A: Math.\ Gen.\ 35 (2002) 7729--7751.

\bibitem{To08}
F.L.\ Toninelli,
Disordered pinning models and copolymers: beyond annealed bounds,
Ann.\ Appl.\ Probab.\ 18 (2008) 1569--1587.

\bibitem{To09}
F.L.\ Toninelli,
Coarse graining, fractional moments and the critical slope of random copolymers,
Elect.\ J.\ Probab.\ 14 (2009) 531--547.

\end{thebibliography}
\end{document}